\allowdisplaybreaks \numberwithin{equation}{section}
\newtheorem{theorem}{Theorem}[section]
\newtheorem{lemma}[theorem]{Lemma}
\newtheorem{thm}{Theorem}[section]
\newtheorem{lem}[thm]{Lemma}
\newtheorem{prop}[thm]{Proposition}
\theoremstyle{definition}
\newtheorem{remark}[theorem]{Remark}
\newcommand{\R}{\mathbb{R}}
\newcommand{\ds}{\displaystyle}
\begin{document}

\title[Solutions for a perturbed fractional Laplacian equation]
{ Large number of bubble solutions for a perturbed fractional Laplacian equation}

 \author{Chunhua Wang\,\,\,and\,\,\,Suting Wei}
\address{School of Mathematics and Statistics \& Hubei Key Laboratory of Mathematical Sciences, Central China Normal University, Wuhan, 430079, P. R. China }
\email{  chunhuawang@mail.ccnu.edu.cn}
\address{Department of Mathematics, South China
Agricultural University, Guangzhou,
510642, China }
\email{  stwei@scau.edu.cn}

 \begin{abstract}
This paper deals with the following nonlinear perturbed fractional Laplacian equation
$$
(-\Delta)^{s} u=K(|y'|,y'')u^{\frac{N+2s}{N-2s}\pm\epsilon},\,\,u>0,\,\,u\in
D^{1,s}(\R^{N}),
$$
where $0<s<1, N\geq 4,$ $(y',y'')\in \R^2\times\R^{N-2},$ $\epsilon>0$ is a small parameter and $K(y)$ is nonnegative and bounded.
By combining a finite reduction argument and local Pohozaev type of identities, we prove that if $N\geq 4,\max\{\frac{N+1-\sqrt{N^{2}-2N+9}}{4},\frac{3-\sqrt{N^{2}-6N+13}}{2}\}<s<1$ and
$K(r,y'')$  has a stable critical point $(r_0, y_0'')$ with
$r_0>0$ and $K(r_0, y_0'')>0,$ then the above problem has large number of bubble solutions if $\epsilon>0$ is small enough.
Also there exist solutions whose functional energy is in the order $\epsilon^{-\frac{N-2s-2}{(N-2s)^{2}}}$. Here, instead of estimating directly the derivatives of the reduced functional,
 we apply some local Pohozaev identities  to
locate the concentration points of the bubble solutions.
Moreover, the concentration points
 of the bubble solutions include a saddle point of $K(y).$

{ Key words }:  bubble solutions;  fractional Laplacian; Pohozaev identities; finite dimensional reduction method.

{ AMS Subject Classification }:35B05; 35B45.
 \end{abstract}

\maketitle

\section{ Introduction and main result }\label{s1}

In this paper, we are interested in the existence of large number of bubble solutions to the following perturbed fractional nonlinear
elliptic equation
\begin{equation}\label{equation}
(-\Delta)^{s} u=K(|y'|,y'')u^{\frac{N+2s}{N-2s}\pm\epsilon},\,\,u>0,\,\,u\in
D^{1,s}(\R^{N}),
\end{equation}
where $0<s<1$ for $N\geq 4,$ $(y',y'')\in \R^2\times\R^{N-2}$, $\epsilon>0$ is a small parameter and $K(y)$ is nonnegative and bounded.

For any $s\in (0,1)$, $(-\Delta)^s$ is the nonlocal operator defined as
\begin{equation*} \label{defofoperator}
\begin{split}
(-\Delta)^sf(y)=&\,C_{N,s}P.V.\int_{\R^N}\frac{f(y)-f(x)}{|x-y|^{N+2s}}dx
\\
=&\,C_{N,s}\lim_{\varepsilon\rightarrow
0^+}\int_{\R^N\setminus B_\varepsilon(y)}\frac{f(y)-f(x)}{|x-y|^{N+2s}}dx,
\end{split}
\end{equation*}
where P.V. stands for the principal value and
$$
C_{N,s}=\pi^{-(2s+\frac{N}{2})}\frac{\Gamma(\frac{N}{2}+s)}{\Gamma(-s)}.
$$
This operator is well defined in $C_{loc}^{1,1}(\R^N)\cap L_{s}(\R^N),$ where
$$
L_{s}(\R^N)\,=\,\Big\{u\in L^{1}_{loc}(\R^N),\int_{\R^{N}}\frac{|u(y)|}{1+|y|^{N+2s}}dy<+\infty\Big\}.
$$
For more details about the fractional Laplacian operator, we refer the readers to \cite{CLM-19, NPV-12} and the reference therein.

The fractional Laplacian operator appears in many areas including biological modeling, physics and
mathematical finances, and can be regarded as the infinitesimal generator of a stable Levy process (see for example\cite{A-09}).
From the view point of mathematics, an important feature of the fractional Laplacian operator is its nonlocal property, which makes it more challenge than the classical Laplacian operator.
This nonlocal operator in $\R^{N}$ can be expressed as a generalized Dirichiel-to-Neumann map for a certain elliptic boundary value problem with  local differential operators defined on the upper half-space $\R^{N}_{+}=\{(y,t):y\in \R^{N},t>0\}$, we also learn from \cite{cs}: given a solution $u=u(y)$ of $(-\Delta)^su(y)=0$ in $\R^{N}$, one can equivalently consider the dimensionally extended problem for $\tilde{u}=\tilde{u}(y,t)$ which solves
\begin{equation}
\left\{
\begin{array}{ll}
div(t^{1-2s}\nabla\tilde{u})=0,\qquad \hbox{in}\,\, \R^{N+1}_{+}
\\[1mm]
-\lim_{t\rightarrow 0}d_st^{1-2s}\partial_t
\tilde{u}(y,t)=(-\Delta)^su(y), \qquad \hbox{on}\,\, \partial\R^{N+1}_{+},
\end{array}
\right.
\end{equation}
where $d_s=2^{2s-1}\Gamma(s)/\Gamma(1-s)$ is a positive constant.
Here, $\tilde{u}(y,t)$ satisfies
\begin{equation}\label{theextensionofu}
\tilde{u}(y,t)=\mathcal{P}_{s}[u]:=\int_{\R^N} \mathcal{P}_{s}(y-\xi,t)u(\xi)d \xi,\quad (y,t)\in \R_{+}^{N+1}:=\R^N\times (0,\infty),
\end{equation}
where
\begin{equation}
\mathcal{P}_{s}(y,t)=\beta(N,s)\frac{t^{2s}}{(|y|^2+t^2)^{\frac{N+2s}{2}}}
\end{equation}
with constant $\beta(N,s)$ such that $\int_{\R^N}\mathcal{P}_{s}(y,t)d x=1$. Moreover, $\tilde{u}\in L^2(t^{1-2s},Q)$ for any compact set $Q$ in $\overline{\R_{+}^{N+1}} $ , $\nabla \tilde{u}  \in L^2(t^{1-2s},\R_{+}^{N+1})$ and $\tilde{u}\in C^{\infty}(\R_{+}^{N+1}).$ Moreover, $\tilde{u}$ satisfies (see [7])
\begin{equation}
\|\nabla\tilde{u}\|_{L^2(t^{1-2s},\R_{+}^{N+1})}=N_s\|u\|_{\dot{H}^{s}(\R^N)}.
\end{equation}
In recent years, fractional problems have been extensively investigated,
see for example \cite{BCDS-12-JDE,BCDS-13-PRSE,BCSS-15-AIPA,CT-10-AM,cs,FQT-12-PRSE,GN-16-DCDS,JLX-14-JEMS,NTW-19-JDE,T-11-CVPDE,TX-11-DCDS,YYY-15-JFA}
and the reference therein.

It is well-known that when $K(y)\equiv1$, the following functions
$$
U_{x,\lambda}(y)=(4^s\gamma)^{\frac{N-2s}{4s}}\Bigl(\frac{\lambda}{1+\lambda^{2}|y-x|^{2}}\Bigr)^{\frac{N-2s}{2}}
:=C_{N,s}\Bigl(\frac{\lambda}{1+\lambda^{2}|y-x|^{2}}\Bigr)^{\frac{N-2s}{2}},\,\,\,\lambda>0,\,\,
x\in\R^{N},
$$
where $\gamma=\frac{\Gamma(\frac{N+2s}{2})}{\Gamma(\frac{N-2s}{2})}$ are the unique (up to the translation and scaling) solutions for the problem
\begin{equation}\label{1.7}
(-\Delta)^{s} u=u^{\frac{N+2s}{N-2s}},\,\,\,u>0\,\,\,\text{in}\,\,\R^{N}.
\end{equation}

 When $s=1$, in \cite{WW-95-RCMP}, Wang and Wei constructed a single bubble solution to \eqref{equation} provided that $K(y)$
 has a non-degenerate critical point and $|K(y)|\leq C(1+|y|^{m}),m<2.$
 In \cite{L-16-S}, assuming that there exist positive constant $r_{0}>0,$ $ \theta$ and $\iota$ such that
 $$
 K(r)=K(r_{0})-C_{0}|r-r_{0}|^{m}+O(|r-r_{0}|^{m+\theta}),\,\,r\in (r_{0}-\iota,r_{0}+\iota), m\in [2,N-2),\
 $$
  Liu proved that \eqref{equation} has large number of bubble solutions if $\epsilon>0$ small enough.
  Moreover, he proved there exists solutions whose functional energy is in the order of $\epsilon^{-\frac{N-2-m}{(N-2)^{2}}}.$
 Motivated by \cite{ PWW-19-JDE} and \cite{LNW-07-CPAM}, we intend in  this paper to construct
 large number of bubbles to \eqref{equation} whose energy is very large if $\epsilon>0$ is small enough
 under more general assumptions on $K(y).$ Here we consider the case  $K(y)=K(|y'|,y'')=K(r,y''),$ where $y=(y',y'')\in\R^2\times\R^{N-2}$. We assume that $K(y)\geq 0$ is bounded and satisfies the following two conditions which are the same as that of \cite{PWW-19-JDE}:

{\bf $(K_{1})$}\,\,$K(r,y'')$ has a stable critical point
$(r_0, y_0'')$ in the following sense: $K(r,y'')$ has a critical point
$(r_0, y_0'')$ satisfying $r_0>0$  and  $K(r_0, y''_0)=1,$ and
$$
\deg (\nabla (K(r,y'')), ( r_0, y_0''))\ne 0.
$$

{\bf $(K_{2})$}\,\,$K(r,y'')\in C^{3}(B_{\vartheta}((r_0, y''_0))),$ where $\vartheta>0$ small and
\begin{equation}\label{q}
\Delta K(r_0,y_0''):=
\frac{\partial^{2}K(r_0,y_0'')}{\partial r^{2}}
+\sum_{i=3}^{N}\frac{\partial^{2}K(r_0,y_0'')}{\partial y^{2}_{i}}<0.
\end{equation}

Obviously, the assumption $(K_{2})$ contains a saddle point $(r_{0},y''_{0})$ of $K(y).$ In the sequel, we denote $y_{0}=(y'_{0},y''_{0})\in \R^{2}\times \R^{N-2},$
where $|y'_{0}|=r_{0}.$
Since  $(r_{0},y''_{0})$ is a critical point of $K(r,y'')$, $(K_{2})$ implies  $K(y)\in C^{3}(B_{\vartheta}( y_0))$ and
\begin{equation}\label{q1}
\Delta K(y_0)
:=\sum_{i=1}^{N}\frac{\partial^{2} K(y_0)}{\partial y^{2}_{i}}<0.
\end{equation}

\medskip

Our main result  is the following

 \begin{theorem}\label{thm1.1}
  Suppose that  $K\ge 0$ is bounded and satisfies $(K_{1})$ and $(K_{2}).$ If $N\geq 4,\max\{\frac{N+1-\sqrt{N^{2}-2N+9}}{4},\frac{3-\sqrt{N^{2}-6N+13}}{2}\}<s<1$,
 then there exists $\epsilon_{0}>0$ such that for $\epsilon\in (0,\epsilon_{0})$
problem~\eqref{equation} has  a solution $u_{\epsilon}$ whose number to the bubbles is of the order $\epsilon^{-\frac{N-2s-2}{(N-2s)^{2}}}$ as $\epsilon\rightarrow 0$.
Particularly, \eqref{equation} has large number of bubble solutions for small $\epsilon>0$
\end{theorem}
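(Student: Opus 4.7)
The plan is to combine a Lyapunov--Schmidt finite-dimensional reduction with local Pohozaev identities on the Caffarelli--Silvestre extension, in the spirit of \cite{LNW-07-CPAM} and \cite{PWW-19-JDE}. The rotational symmetry in $y'$ reduces the unknowns to three scalar parameters: the radius $r$ of a regular $k$-gon of concentration points in the $y'$-plane, their common $y''$-height $\bar y''$, and a common concentration scale $\lambda$, with the integer $k$ (number of bubbles) treated as an implicit function of $\lambda$.

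I would first build the ansatz: for $r$ near $r_0$, $\bar y''$ near $y_0''$ and $\lambda$ large, place
\begin{equation*}
x_j=\Bigl(r\cos\tfrac{2\pi(j-1)}{k},\,r\sin\tfrac{2\pi(j-1)}{k},\,\bar y''\Bigr),\qquad W_{r,\bar y'',\lambda}=\sum_{j=1}^k U_{x_j,\lambda},
\end{equation*}
and seek $u_{\epsilon}=W+\phi$ in the subspace of $D^{1,s}(\mathbb{R}^N)$ consisting of functions invariant under the cyclic group $\mathbb{Z}_k$ acting on $y'$ and even in each coordinate of $y''-\bar y''$, with $\phi$ orthogonal to the approximate kernel $\mathrm{span}\{\partial_\lambda W,\partial_r W,\partial_{\bar y''_i}W\}$. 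A standard contraction argument, based on the non-degeneracy of the fractional bubble and on sharp decay and interaction estimates between distant $U_{x_j,\lambda}$, produces a unique $\phi=\phi_{r,\bar y'',\lambda}$ small in the natural norm and $C^1$ in the parameters; the shift of the exponent by $\pm\epsilon$ is absorbed in the fixed point once $k$ and $\lambda$ are placed in the right range.

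The key step is to locate the parameters via local Pohozaev identities, rather than by differentiating the reduced functional. On the extended half-space $\mathbb{R}^{N+1}_+$ I would test the equation for $\tilde u_{\epsilon}$ with $\partial_{y_i}\tilde u_{\epsilon}$ and with the dilation vector field $y\cdot\nabla_y\tilde u_{\epsilon}+\tfrac{N-2s}{2}\tilde u_{\epsilon}$, and integrate against the weight $t^{1-2s}$ over the cylinders $B_d(x_j)\times(0,\infty)$ of fixed radius $d>0$, summing over $j$. After the symmetries kill the angular components, this yields, modulo controllable errors, a system of the schematic form
\begin{equation*}
\begin{cases}
\partial_r K(r_0,y_0'')+o(1)=0,\\
\partial_{y_i}K(r_0,y_0'')+o(1)=0,\quad i=3,\dots,N,\\
A\,\epsilon\log\lambda+B\,\lambda^{-2}\Delta K(r_0,y_0'')+C\,k^{N-2s}(\lambda r_0)^{-(N-2s)}+o(\cdot)=0,
\end{cases}
\end{equation*}
with explicit positive constants $A,B,C$. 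Condition $(K_2)$ gives $\Delta K<0$, so the third equation balances when $\lambda^{-2}\sim\epsilon\log\lambda\sim k^{N-2s}\lambda^{-(N-2s)}$, forcing $k\sim \epsilon^{-(N-2s-2)/(N-2s)^2}$; condition $(K_1)$ provides a nontrivial Brouwer degree for $\nabla K$ at $(r_0,y_0'')$ which transfers to the first $N-1$ equations, and together with the monotonicity in $\lambda$ of the scalar equation produces a zero of the full system.

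The hard part is the error analysis of the Pohozaev identities in the fractional setting. The boundary terms on $\partial B_d(x_j)\times(0,\infty)$ are weighted by $t^{1-2s}$ and live in the interior of the extension, while the volume remainders couple $\phi$ with derivatives of $K$ and of $W$. Controlling all of these strictly below $\epsilon\log\lambda$, $\lambda^{-2}$ and $k^{N-2s}\lambda^{-(N-2s)}$ requires sharp weighted pointwise and $L^2$ bounds on $\phi$ inside each cylinder, combined with precise off-diagonal decay of $U_{x_j,\lambda}$ and of its extension. This is exactly where the lower bound $s>\max\{(N+1-\sqrt{N^{2}-2N+9})/4,(3-\sqrt{N^{2}-6N+13})/2\}$ enters: it ensures the critical decay exponents render every remainder negligible. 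Once these estimates are in place, Brouwer degree closes the argument and delivers the solution $u_{\epsilon}$ announced in Theorem~\ref{thm1.1}.
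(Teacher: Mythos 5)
Your overall strategy --- a Lyapunov--Schmidt reduction around a $k$-gon of bubbles followed by local Pohozaev identities on the Caffarelli--Silvestre extension and a Brouwer degree argument --- is exactly the route the paper takes. But there is a concrete error at the heart of your argument, namely in the scalar equation that is supposed to produce the bubble count. You place the exponent-perturbation term $A\,\epsilon\log\lambda$ on the same footing as $B\,\lambda^{-2}\Delta K$ and $C\,k^{N-2s}\lambda^{-(N-2s)}$ and claim that the three-way balance $\lambda^{-2}\sim\epsilon\log\lambda\sim k^{N-2s}\lambda^{-(N-2s)}$ forces $k\sim\epsilon^{-(N-2s-2)/(N-2s)^2}$. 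It does not: $\lambda^{-2}\sim\epsilon\log\lambda$ gives $\lambda\sim(\epsilon|\log\epsilon|)^{-1/2}$, and then $k^{N-2s}\sim\lambda^{N-2s-2}$ yields $k\sim(\epsilon|\log\epsilon|)^{-\frac{N-2s-2}{2(N-2s)}}$, whose exponent differs from $\frac{N-2s-2}{(N-2s)^2}$ unless $N-2s=2$ (which is excluded since $N\ge 4$, $s<1$). In fact the $\epsilon$-term is not a leading term at all: differentiating $\int U_{0,\lambda}^{2^*_s+\epsilon}=\lambda^{(N-2s)\epsilon/2}A_\epsilon$ in $\lambda$ gives a contribution of order $\epsilon/\lambda$ per bubble, and at the correct scale $\lambda\sim\epsilon^{-1/(N-2s)}$ one has $\epsilon/\lambda\ll\lambda^{-3}$ precisely because $N-2s>2$. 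The paper accordingly relegates this term to the remainder (the $F_3$ estimate in Lemma \ref{lema.2}) and balances only $-B_1\lambda^{-3}$ against $B_3m^{N-2s}\lambda^{-(N-2s+1)}$, which fixes $\lambda\sim m^{(N-2s)/(N-2s-2)}$; the link to $\epsilon$ is then imposed by choosing $m=[\epsilon^{-(N-2s-2)/(N-2s)^2}]$ a priori so that $\lambda\sim\epsilon^{-1/(N-2s)}$ and all $\epsilon$-dependent remainders (in $\|l_\epsilon\|_{**}$, in the Pohozaev errors, and in the $\lambda$-equation) fall strictly below the main terms. As written, your heuristic both misidentifies the dominant terms and, taken at face value, delivers the wrong number of bubbles, so this step needs to be redone.

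Two secondary points. First, you never explain how the Lagrange multipliers $c_l$ attached to the orthogonality conditions are eliminated; in the paper this is Proposition \ref{l1-18-1}, which shows that the two Pohozaev identities together with $\langle I'(u_\epsilon),\partial_\lambda Z\rangle=0$ (note: the $\lambda$-equation is an ordinary projection, not a Pohozaev identity) force $c_l=0$, and it requires the tail estimates \eqref{14-2-1-1}--\eqref{14-2-2-2}. Second, the lower bound on $s$ does not enter through the Pohozaev error analysis as you suggest, but already in the reduction step: $2s>\tau$ and $N>4+2\tau-2s$ are exactly what make the interaction and Taylor-remainder estimates \eqref{c-1}, \eqref{c-2} and \eqref{16-7-2} in Lemma \ref{lem2.5} close. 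Also, the paper integrates the Pohozaev identities over a single small half-ball $\mathcal{B}_\rho^+(y_0)$ containing all $m$ bubbles rather than over separate fixed-radius cylinders around each $x_j$ --- with $m\to\infty$ bubbles collapsing to $y_0$ your cylinders would essentially coincide anyway, so this is cosmetic, but the single-domain formulation is what makes the $\langle y,\nabla K\rangle$ volume term produce the equations $\nabla_{\bar r,\bar y''}K(\bar r,\bar y'')=o(\epsilon^{\frac{1-\iota}{N-2s}})$ cleanly.
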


\begin{remark}\label{re-1}
$2s>\tau$ (defined in section \ref{s2}) and $0<s<1$ are equivalent to $\frac{N+1-\sqrt{N^{2}-2N+9}}{4}<s<1,$ which is needed in
\eqref{c-1} and \eqref{c-2} in Lemma \ref{lem2.5}.
$N>4+2\tau-2s$ and $0<s<1$ are equivalent to $\frac{3-\sqrt{N^{2}-6N+13}}{2}<s<1,$ which is needed in
\eqref{16-7-2} in Lemma \ref{lem2.5}.
Both of them are technical assumptions and they can be satisfied automatically when $s=1,$
but here we do not know how to get rid of them.
\end{remark}


\begin{remark}\label{r2}
 We conjecture that when $N=3=2+2s$, a similar result can also be obtained.
 Since in this case the number of bubbles in construction behaves as a logarithm of
 the bubbles' height, one have to make some corresponding modifications. For the details, the readers can refer to \cite{VS-19-ANA}.
\end{remark}

Now we outline the main idea in the
proof of Theorem \ref{thm1.1} and discuss the main difficulties in
the proof of such a result.

Throughout the remainder of this paper, we shall prove Theorem \ref{thm1.1}  in detail for the case $2^{*}_{s}-1+\epsilon$
since the case $2^{*}_{s}-1-\epsilon$ can be obtained by slightly modifying the arguments.
We use a Lyapunov-Schmidt
 reduction argument to prove Theorem \ref{thm1.2}. More precisely, we follow the method
 in \cite{PWW-19-JDE} to construct bubble solutions of problem
 \eqref{equation},  where the existence of infinitely many solutions
  for the
  prescribed scalar curvature problem is proved. In \cite{PWW-19-JDE}, there is no parameter
 appearing in their problem. Peng, Wang and Wei used $m$,
 the number of the bubbles of the solution, as the parameter to construct infinitely
 many positive bubble solutions.
 This idea was first introduced by Wei and Yan in \cite{wy}, which was applied
 to study other problems, such as \cite{cwy,DLY,GPY,LWX,WWY,WWY1,wy,WY1,WY2,WY4}. Unlike \cite{PWW-19-JDE}, in our proof, we use $\epsilon$
 as the parameter in the construction of bubble solutions, but the number of bubbles
 depends on the parameter $\epsilon.$ This is motivated by \cite{LNW-07-CPAM}, where they constructed multiple spikes to a singular perturbed
 problem and the number of spike depends on the small parameter.
 Such problems are considered in \cite{L-16-S,WW-95-RCMP}. Since $(r_0, y_0'')$
  may be a saddle point of $K(r, y''),$ we can not determine the location of the bubbles by using minimization or maximization procedure.
   Here we will use the Pohozaev identities to find algebraic equations which determine the location of the bubbles. We will discuss this in more details later. This idea was first introduced by Peng, Wang and Yan \cite{PWY-18-JFA},
which was used to deal with other problems such as \cite{GLN-19-CV,gln-19}. Moreover, the application of some local Pohozaev identities can
simplify many complicated and tedious computations which were involved in estimating directly the partial derivatives of the reduced functional such as \cite{Re, Re1}.

Define
\[
\begin{split}
 H_{s}=\Bigl\{&u:u\in D^{1,s}(\R^{N}),\;\;  u(y_1, -y_2,y'')=
u(y_1, y_2, y''),
 \\
 &
u(r\cos\theta,r\sin\theta,y'')=u\Bigl(r\cos\Bigl(\theta+\frac{2\pi j
}{m}\Bigl),r\sin\Bigl(\theta+\frac{2\pi j
}{m}\Bigl),y''\Bigl)\Bigl\}.
\end{split}
\]
Let
$$
x_{j}=\Bigl(\bar r \cos\frac{2(j-1)\pi}{m},\bar r
\sin\frac{2(j-1)\pi}{m},\bar{y}''\Bigr),\,\,\,j=1,\cdots,m,
$$
where $\bar{y}''$ is a vector in $\R^{N-2}.$

We will use $U_{x_{j},\lambda}$
as an approximate solution. Let
$\delta>0$ be a small constant, such that  $K(r, y'')>0$ if  $|(r,y'')- (r_0, y_0'')|\le
10\delta$.   

Denote
\begin{eqnarray*}
Z_{\bar r,\bar{y}'',\lambda}(y)=\sum_{j=1}^{m}U_{x_{j},\lambda}(y).
\end{eqnarray*}
By the weak symmetry of  $K(y)$, we observe that
$K(x_j)=K(\bar r,\bar{y}'')$,
$j=1,\cdots,m$. In this paper, we always assume that  ${m=[\epsilon^{-\frac{N-2s-2}{(N-2s)^{2}}}]>0}$ is a large integer,
$\lambda\in[L_{0}\epsilon^{-\frac{1}{N-2s}},L_{1}\epsilon^{-\frac{1}{N-2s}}]$ for
some constants $L_{1}>L_{0}>0$ and
\begin{equation}\label{r}
 |(\bar r,\bar{y}'')-(r_{0},y''_{0})|\leq \epsilon^{\frac{1+\iota}{N-2s}},
\end{equation}
where $\iota>0$ is a small constant.

\begin{remark}\label{re1}
Note that $\lim_{\epsilon\rightarrow 0}\lambda^{\frac{N-2s}{2}\epsilon}=c,$
where $c$ is some positive constat.
Then there exist $\epsilon_{0}>0$ and $C>0$ independent of $\epsilon$ such that for any
$\epsilon\in(0,\epsilon_{0}),$
\begin{equation}\label{la}
\lambda^{\frac{N-2s}{2}\epsilon}\leq C.
\end{equation}
\end{remark}

In order to prove Theorem \ref{thm1.1}, we will show the following
result.
\begin{theorem}\label{thm1.2}
Under the assumptions of Theorem~\ref{thm1.1},  there exists $\epsilon_{0}>0$, such that for any $\epsilon \in (0, \epsilon_{0})$,
problem \eqref{equation} has a solution $u_{\epsilon}$ of the form
\begin{equation}\label{sol}
u_{\epsilon}=Z_{\bar r_{\epsilon},\bar{y}_{\epsilon}'',\lambda_{\epsilon}}+\varphi_{\epsilon}=\sum_{j=1}^{m}
U_{x_{j},\lambda_{\epsilon}}+\varphi_{\epsilon},
\end{equation}
where $\varphi_{\epsilon}\in H_{s}$. Moreover,  as
$\epsilon\rightarrow 0$,
$\lambda_{\epsilon}\in[L_{0}\epsilon^{-\frac{1}{N-2s}},L_{1}\epsilon^{-\frac{1}{N-2s}}]$,
$ (\bar r_\epsilon,\bar{y}_{\epsilon}'')\to (r_{0},y''_{0}),$ and
$\lambda_{\epsilon}^{-\frac{N-2s}{2}}\|\varphi_{\epsilon}\|_{L^{\infty}}\rightarrow
0$.
\end{theorem}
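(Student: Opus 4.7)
The plan is to construct $u_\epsilon = Z_{\bar r, \bar y'', \lambda} + \varphi_\epsilon$ by a Lyapunov--Schmidt finite-dimensional reduction in the symmetric space $H_s$, and then to fix the parameters $(\bar r_\epsilon, \bar y_\epsilon'', \lambda_\epsilon)$ by local Pohozaev identities applied to the full solution rather than by differentiating the reduced functional. It is convenient to work with the Caffarelli--Silvestre extension so that \eqref{equation} becomes a local degenerate elliptic problem on $\R^{N+1}_+$, for which Pohozaev-type integration by parts on a ball $B_\delta(x_1) \times [0,\infty)$ is available.

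For each admissible $(\bar r, \bar y'', \lambda)$ in the range of \eqref{r} and for $m = [\epsilon^{-(N-2s-2)/(N-2s)^2}]$, I would decompose $H_s$ as the direct sum of the approximate kernel
\[
T := \mathrm{span}\Bigl\{\partial_\lambda Z_{\bar r,\bar y'',\lambda},\ \partial_{\bar r} Z_{\bar r,\bar y'',\lambda},\ \partial_{y_i''} Z_{\bar r,\bar y'',\lambda}\Bigr\}
\]
and its $D^{1,s}$-orthogonal complement $T^\perp$, writing $u = Z + \varphi$ with $\varphi \in T^\perp$. Equation \eqref{equation} then splits into the projected equation on $T^\perp$ and a finite system of Lagrange multiplier conditions on $T$. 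Combining the non-degeneracy of the linearization around a single fractional bubble in the symmetric class with the bubble--bubble interaction estimates of Lemma \ref{lem2.5}, the projected equation is solved by a contraction-mapping argument, producing a unique $\varphi_\epsilon(\bar r, \bar y'', \lambda) \in T^\perp$ that is $C^1$ in the parameters and satisfies $\lambda^{-(N-2s)/2}\|\varphi_\epsilon\|_{L^\infty} \to 0$.

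To fix the parameters, I would substitute $u = Z + \varphi_\epsilon$ into three families of local Pohozaev identities in a fixed ball around $x_1$: one for the dilation vector field (to determine $\lambda$) and, using the rotational symmetry imposed by $H_s$, one for the radial translation along $\bar r$ and one for each translation $\partial_{y_i''}$. Each identity produces an algebraic relation whose leading order involves $K$ and its first and second derivatives at $(r_0, y_0'')$ together with an explicit interaction sum over the ring of $m$ bubbles. Assumption $(K_2)$ combined with $K(r_0,y_0'')=1$ balances the dilation identity at the scale $\lambda_\epsilon \sim \epsilon^{-1/(N-2s)}$, justifying the announced energy order and the choice of $m$. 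The translation identities reduce, modulo small errors, to a perturbation of $\nabla K(\bar r,\bar y'') = 0$ near $(r_0, y_0'')$; assumption $(K_1)$ then yields a zero via a Brouwer-degree/stability argument, giving the required $(\bar r_\epsilon, \bar y_\epsilon'', \lambda_\epsilon)$ and hence the solution \eqref{sol}.

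The main obstacle will be the sharp control of the error $l_\epsilon := (-\Delta)^s Z - K(r,y'') Z^{(N+2s)/(N-2s)+\epsilon}$ and of the cross terms appearing in the Pohozaev identities along a ring of $m \to \infty$ bubbles. Unlike the local case, the fractional ground state decays only algebraically, so adjacent centers (separated by distance $\sim \bar r / m$) interact nontrivially, and the interaction sum must be evaluated essentially sharply. Remark \ref{re-1} traces the technical restrictions on $s$ precisely to this point, and it is exactly here that the conditions $2s > \tau$ and $N > 4 + 2\tau - 2s$ used in Lemma \ref{lem2.5} become necessary.
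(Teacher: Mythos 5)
Your overall architecture (weighted-norm reduction with contraction mapping, then local Pohozaev identities on the extended problem plus a degree argument in $(\bar r,\bar y'',\lambda)$) is the paper's architecture, and your reduction step and your identification of where the restrictions on $s$ enter are accurate. But two points in your parameter-fixing step would fail as written. First, the choice of Pohozaev region: you integrate over a fixed ball $B_\delta(x_1)$ around a single bubble, whereas the bubbles sit on a ring of radius $\bar r\approx r_0$ with spacing $\sim r_0/m\to 0$, so the sphere $|y-x_1|=\delta$ passes within distance $O(1/m)$ of other concentration points, where $u_\epsilon\sim\lambda^{(N-2s)/2}$; the boundary integrals $\int_{\partial''}t^{1-2s}|\nabla\tilde u_\epsilon|^2$ etc. are then not negligible and cannot be absorbed into the error. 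The paper instead takes the region $B_\rho(y_0)=\{(r,y''):|(r,y'')-(r_0,y_0'')|\le\rho\}$, a fixed neighborhood (in the $(r,y'')$ variables) of the whole critical circle, which contains all $m$ bubbles and keeps the boundary at fixed distance from every $x_j$; this is what makes estimates like \eqref{l3-2}--\eqref{l3-12} close.

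Second, you assign the dilation identity the job of determining $\lambda$. Here that identity's leading content is $\int\langle\nabla K,y\rangle(u_\epsilon)_+^{2^*_s+\epsilon}$ (the symmetry-breaking term $(\tfrac{N-2s}{2}-\tfrac{N}{2^*_s+\epsilon})\int K(u_\epsilon)_+^{2^*_s+\epsilon}=O(m\epsilon)$ is an error term, see \eqref{l3-0}), so after Lemma \ref{lem3.4} it yields $\partial_{\bar r}K(\bar r,\bar y'')=o(1)$ — i.e., it locates $\bar r$, not $\lambda$. Note also that the genuine translation identities in $y_1,y_2$ vanish identically by the symmetry of $H_s$, so there is no separate "radial translation" identity; counting your useful identities gives only $N-1$ equations for $N$ parameters and $\lambda$ is left undetermined. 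The missing equation is the ordinary projection of the equation onto $\partial_\lambda Z_{\bar r,\bar y'',\lambda}$ (\eqref{integralofum}), whose expansion in Lemma \ref{lem3.2} produces the balance $-B_1/\lambda^3+B_3m^{N-2s}/\lambda^{N-2s+1}=O(\epsilon^{(3+\iota)/(N-2s)})$; this is where $(K_2)$ (through $\Delta K(y_0)<0$ giving $B_1>0$) and the interaction sum enter, and it is what forces $\lambda\sim\epsilon^{-1/(N-2s)}$ and $m\sim\epsilon^{-(N-2s-2)/(N-2s)^2}$, not the dilation identity. Finally, recall that the Pohozaev identities and \eqref{integralofum} by themselves only locate the parameters once one also shows they force all multipliers $c_l$ to vanish (Proposition \ref{l1-18-1}); that step, which needs the extra estimates \eqref{14-2-1-1}--\eqref{14-2-2-2} outside $B_\rho(y_0)$, is absent from your plan.
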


Now we outline some of the main ideas in the proof of such a result.
 The functional corresponding to problem~\eqref{equation} is
\[
I(u)=\frac12 \int_{\R^N} |(-\Delta)^{\frac{s}{2}} u|^2 dy -\frac1{2^{*}_{s}+\epsilon}\int_{\R^N}K(y)(u)_{+}^{2^*_{s}+\epsilon} dy.
\]
Using the reduction argument, the problem of finding a critical point for $I(u)$ with the  form \eqref{sol} can be reduced to that of finding a critical point of the following function
$$
F(\bar r, \bar y'', \lambda):=I(Z_{\bar r,\bar{y}'',\lambda}+ \varphi_{\bar r,\bar{y}'',\lambda}),
$$
where $\lambda\in[L_{0}\epsilon^{-\frac{1}{N-2s}},L_{1}\epsilon^{\frac{1}{N-2s}}]$ for
some constants $L_{1}>L_{0}>0$ and $(\bar r,\bar{y}'')$ satisfies \eqref{r}.
Instead of estimating the derivatives of the reduced function $F(\bar r, \bar y'', \lambda)$ with respect
to $\bar{r}$ and $\bar{y}_{k},k=3,\cdots,N$ directly which
involve very complicated calculations, we turn to prove that if $(\bar{r},\bar{y}'',\lambda)$ satisfies the following local Pohozaev identities:
\begin{equation}
\begin{split}
&\int_{\partial''\mathcal{B}_{\rho}^{+}(y_0)}t^{1-2s}\frac{\partial\tilde{u}_\epsilon}{\partial \nu}\frac{\partial\tilde{u}_\epsilon}{\partial y_i} dS
-\frac{1}{2}\int_{\partial''\mathcal{B}_{\rho}^{+}(y_0)}t^{1-2s}\nabla|\tilde{u}_\epsilon|^2 \nu_idS
\\&+\frac{1}{2^{*}_{s}+\epsilon}\int_{\partial B_{\rho}(y_0)}K(y)(u_\epsilon)_{+}^{2^{*}_{s}+\epsilon}\nu_ids
-\frac{1}{2^{*}_{s}+\epsilon}\int_{B_{\rho}(y_0)}\frac{\partial K(y)}{\partial y_i}(u_\epsilon)_{+}^{2^{*}_{s}+\epsilon}dy
=0
\end{split}
\end{equation}
and
\begin{equation}
\begin{split}
&\int_{\partial''\mathcal{B}_{\rho}^{+}(y_0)} t^{1-2s}\langle Y,\nabla\tilde{u}_\epsilon \rangle\frac{\partial\tilde{u}_\epsilon}{\partial \nu}dS
-\frac{1}{2}\int_{\partial''\mathcal{B}_{\rho}^{+}(y_0)}t^{1-2s}|\nabla\tilde{u}_\epsilon|^2\langle Y,\nu\rangle dS
\\
&
+\frac{N-2s}{2}\int_{\partial''\mathcal{B}_{\rho}^{+}(y_0)}t^{1-2s}\tilde{u}_\epsilon\frac{\partial \tilde{u}_\epsilon}{\partial\nu} dS
+\frac{1}{2^{*}_{s}+\epsilon} \int_{\partial B_{\rho}(y_0)} K(r,y'')(u_\epsilon)_{+}^{2^{*}_{s}+\epsilon}\langle y,\nu\rangle ds
\\
&-\frac{N}{2^{*}_{s}+\epsilon}\int_{ B_{\rho}(y_0)}K(r,y'') (u_\epsilon)_{+}^{2^*_{s}+\epsilon}dy
-\frac{1}{2^{*}_{s}+\epsilon}
\int_{B_{\rho}(y_0)}\langle \nabla K,y\rangle(u_\epsilon)_{+}^{2^*_{s}+\epsilon} dy
=0,
\end{split}
\end{equation}
where $u_{\epsilon}=Z_{\bar{r},\bar{y}'',\lambda}+\varphi, \tilde{u}_{\epsilon}$
is the harmonic extension of $u_{\epsilon}$ (see \eqref{theextensionofu}) and satisfies  the following equation
\begin{equation}\label{e}
\left\{
\begin{array}{ll}
div(t^{1-2s}\nabla\tilde{u}(y,t))=0,\\[1mm]
-\lim_{t\rightarrow 0}t^{1-2s}\partial_t
\tilde{u}(y,t)=K(y)(u)_{+}^{2^{*}_{s}-1+\epsilon}.
\end{array}
\right.
\end{equation}
Moreover,
\begin{equation}
\begin{split}
\mathcal{B}_{\rho}^{+}(y_0)=\,&\{Y=(y,t):|Y-(y_0,0)|\leq  \rho \quad and \quad t>0\}\subset \R^{N+1}_{+},
\\
\partial'\mathcal{B}_{\rho}^{+}(y_0)=\,&\{Y=(y,t):|Y-(y_0,0)|\leq  \rho \quad and \quad t=0\}\subset \R^{N},
\\
\partial''\mathcal{B}_{\rho}^{+}(y_0)=\,&\{Y=(y,t):|Y-(y_0,0)|= \rho\quad and \quad t>0\}\subset\R^{N+1}_{+},
\\
&\partial B_{\rho}(y_0)=\,\{y:|y-y_0|= \rho \}\subset \R^{N},
\end{split}
\end{equation}
where $\rho$ is a small positive constant.

Due to the non-localness of the fractional Laplacian operator,
we can not built some local Pohozaev identities for problem \eqref{equation}. So we need to study the corresponding
harmonic extension problem \eqref{e}. Hence, we have to estimate this kind of integrals
which do not appear in the local problem. Here we use some similar arguments as \cite{gln-19}.

The rest of this paper is organized as follows. In section~\ref{s2}, we will
carry out the reduction procedure. Then, we will study the reduced finite
dimensional problem and prove Theorem~\ref{thm1.2} in
section~\ref{s3}. We put all the technical estimates in Appendices
\ref{sa}, \ref{s4}, \ref{sd} and \ref{sc}.

\section{Finite-dimensional reduction}\label{s2}

In this section, we perform a finite dimensional
reduction by using $Z_{\bar r,\bar{y}'',\lambda}(y)$ as the approximation solution and considering
the linearization of the problem \eqref{equation} around the approximation solution.
First, we introduce the following norms:
\begin{equation}\label{2.1}
\|u\|_{*}=\sup_{y\in\R^{N}}\Big(\sum_{j=1}^{m}\frac{1}{(1+\lambda|y-x_{j}|)^{\frac{N-2s}{2}+\tau}}\Big)^{-1}\lambda^{-\frac{N-2s}{2}}|u(y)|
\end{equation}
and
\begin{equation}\label{2.2}
\|f\|_{**}=\sup_{y\in\R^{N}}\Big(\sum_{j=1}^{m}\frac{1}{(1+\lambda|y-x_{j}|)^{\frac{N+2s}{2}+\tau}}\Big)^{-1}\lambda^{-\frac{N+2s}{2}}|f(y)|,
\end{equation}
where $\tau=\frac{N-2s-2}{N-2s}$.

Denote
\begin{eqnarray*}
Z_{j,1}=\frac{\partial U_{x_{j},\lambda}}{\partial \lambda},\,\,\,\,\,
Z_{j,2}=\frac{\partial U_{x_{j},\lambda}}{\partial \bar
r},\,\,\,\,\,Z_{j,k}=\frac{\partial U_{x_{j},\lambda}}{\partial \bar{y}''_{k}},\;\;
k=3,\cdots,N.
\end{eqnarray*}
We consider the following problem:
\begin{equation}\label{2.3}
 \left\{
 \begin{array}{ll}
   (-\Delta)^{s} \varphi
-(2^{*}_{s}-1+\epsilon) K(r, y'')Z_{\bar r,\bar{y}'',\lambda}^{2^{*}_{s}-2+\epsilon}\varphi=h
+\ds\sum_{l=1}^{N}c_{l}\ds\sum_{j=1}^{m}U_{x_{j},\lambda}^{2^{*}_{s}-2}Z_{j,l},\,\,\,\text{in}\,\,\,\,\R^{N}, \\[2.5mm]
    \varphi\in H_{s},\qquad\, \sum\limits_{j=1}^{m} \ds\int_{\R^N}Z_{x_{j},\lambda}^{2^{*}_{s}-2}Z_{j,l}\varphi=0,\;\qquad l=1,\cdots,N, \\[2.5mm]
 \end{array}
 \right.
\end{equation}
for some real numbers $c_{l}$.

 \begin{lem}\label{lem2.1}
 Suppose that $\varphi_{\epsilon}$ solves \eqref{2.3} for $h=h_{\epsilon}$. If $\|h_{\epsilon}\|_{**}$ goes to zero as $\epsilon$ goes to zero, so does $\|\varphi_{\epsilon}\|_{*}$.
 \end{lem}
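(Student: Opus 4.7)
I argue by contradiction in the spirit of standard Lyapunov--Schmidt reductions (see for instance \cite{wy,PWW-19-JDE}). Assume the conclusion fails: there exist sequences $\epsilon_n\to 0$, admissible parameters $(\bar r_n,\bar y''_n,\lambda_n)$, source terms $h_n=h_{\epsilon_n}$ with $\|h_n\|_{**}\to 0$, real numbers $c_{n,l}$, and functions $\varphi_n\in H_s$ solving \eqref{2.3} with $\|\varphi_n\|_*\ge c_0>0$. Dividing through we may assume $\|\varphi_n\|_*=1$ while $\|h_n\|_{**}\to 0$. The first task is to control the Lagrange multipliers: testing \eqref{2.3} against each $\sum_{j=1}^{m}Z_{j,k}$, $k=1,\dots,N$, and using the bubble identity $(-\Delta)^s U_{x_j,\lambda_n}=U_{x_j,\lambda_n}^{2^*_s-1}$ together with the approximate orthogonality of the kernel functions of the limiting linearized operator yields a nearly diagonal linear system for the $c_{n,l}$. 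The standard computations of $\int U^{2^*_s-2}_{x_j,\lambda_n}Z_{j,l}Z_{j,k}\asymp\delta_{lk}\lambda_n^{\beta_k}$, combined with the smallness of off-diagonal interactions arising from the choice $m\sim\epsilon_n^{-(N-2s-2)/(N-2s)^2}$ and $(\bar r_n,\bar y''_n)\to(r_0,y_0'')$, allow the system to be inverted, giving bounds of the form $\lambda_n^{-\gamma_l}|c_{n,l}|=o(1)+O(\|h_n\|_{**})$ with exponents $\gamma_l$ depending on whether $l=1$, $l=2$, or $l\ge 3$.

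\textbf{Pointwise estimate.} Inverting the fractional Laplacian via the Riesz kernel gives
\[
\varphi_n(y)=c_{N,s}\int_{\R^N}\frac{\Psi_n(z)}{|y-z|^{N-2s}}\,dz,
\]
where $\Psi_n$ denotes the right-hand side of the first line of \eqref{2.3}. Two ingredients dominate: the weighted convolution estimate
\[
\int_{\R^N}\frac{dz}{|y-z|^{N-2s}\,(1+\lambda_n|z-x_j|)^{(N+2s)/2+\tau}}\le \frac{C}{\lambda_n^{2s}}\cdot\frac{1}{(1+\lambda_n|y-x_j|)^{(N-2s)/2+\tau}},
\]
which is valid precisely because $2s>\tau$ and $N>4+2\tau-2s$ (the hypotheses singled out in Remark~\ref{re-1}), and the pointwise bound $Z^{2^*_s-2+\epsilon_n}_{\bar r_n,\bar y''_n,\lambda_n}(z)\lesssim\lambda_n^{2s}\sum_j(1+\lambda_n|z-x_j|)^{-4s}$ (here \eqref{la} is used to absorb the $\epsilon_n$-power). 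Applying the convolution bound separately to each term of $\Psi_n$, and inserting the multiplier estimates from the previous step, I obtain
\[
\Big(\sum_{j=1}^{m}\frac{1}{(1+\lambda_n|y-x_j|)^{(N-2s)/2+\tau}}\Big)^{-1}\!\lambda_n^{-(N-2s)/2}|\varphi_n(y)|\le \theta+o(1)+C\|h_n\|_{**},
\]
uniformly outside a fixed $\lambda_n^{-1}$-neighbourhood of the bubble cores, with an explicit constant $\theta<1$ coming from the convolution.

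\textbf{Blow-up near a bubble: the main obstacle.} It remains to rule out concentration of $|\varphi_n|$ at one of the $x_j$, which is the delicate step. Choose $y_n$ attaining (up to a factor $1/2$) the supremum defining $\|\varphi_n\|_*=1$; by the previous estimate, necessarily $y_n\in B_{R/\lambda_n}(x_{j_n})$ for some fixed $R$, and by the rotational symmetry encoded in $H_s$ we may take $j_n\equiv 1$. Set $\tilde\varphi_n(z):=\lambda_n^{-(N-2s)/2}\varphi_n(x_1+z/\lambda_n)$, so $\|\tilde\varphi_n\|_{L^\infty}\le 1$. Then $\tilde\varphi_n$ satisfies a rescaled version of \eqref{2.3}: the coefficient $K(x_1+z/\lambda_n)\to K(y_0)=1$ locally uniformly, the contributions of $x_j$, $j\ne 1$, disappear because $\lambda_n|x_1-x_j|\to\infty$, and the rescaled multiplier term goes to zero by Step~1. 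Standard fractional elliptic regularity (see \cite{cs,CLM-19}) yields a locally uniform limit $\tilde\varphi_\infty$ solving
\[
(-\Delta)^s\tilde\varphi_\infty=(2^*_s-1)U_{0,1}^{2^*_s-2}\tilde\varphi_\infty\quad\text{in }\R^N,\qquad |\tilde\varphi_\infty(z)|\lesssim(1+|z|)^{-(N-2s)/2-\tau}.
\]
The orthogonality conditions in \eqref{2.3}, after dividing by the appropriate powers of $\lambda_n$, pass to the limit and yield $\int U_{0,1}^{2^*_s-2}\partial_\mu U_{0,1}\,\tilde\varphi_\infty=0$ for $\mu\in\{\lambda,x_1,\dots,x_N\}$ evaluated at $(\lambda,x)=(1,0)$. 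By the nondegeneracy of $U_{0,1}$ for the fractional Yamabe problem \eqref{1.7} this forces $\tilde\varphi_\infty\equiv 0$, contradicting $|\tilde\varphi_n(\lambda_n(y_n-x_1))|\gtrsim 1$. The truly technical points, and the reason the dimensional restrictions in Remark~\ref{re-1} are needed, are: (i) ensuring the $m_n\to\infty$ close-by bubbles do not collectively spoil the weighted estimate; (ii) handling the nonlocal tail when localizing $(-\Delta)^s\tilde\varphi_n$ to recover the limit equation; (iii) passing the orthogonality relations to the limit under the $\lambda_n$-rescaling.
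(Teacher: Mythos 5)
Your proposal is correct and follows essentially the same route as the paper: a contradiction/normalization argument, estimation of the multipliers $c_l$ by testing against the kernel elements, a weighted pointwise bound via the Riesz representation and the convolution lemmas forcing the $*$-norm to be attained in an $O(\lambda^{-1})$-neighbourhood of a bubble core, and a blow-up to the nondegenerate linearized equation to reach the contradiction. The only (cosmetic) differences are the order of the first two steps and your attribution of the convolution estimate's validity to the conditions of Remark~\ref{re-1}, whereas the relevant condition there is just $\tfrac{N-2s}{2}+\tau<N-2s$; the conditions $2s>\tau$ and $N>4+2\tau-2s$ are what the paper needs in Lemma~\ref{lemb3} and Lemma~\ref{lem2.5}.
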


 \begin{proof}
We follow the idea in \cite{WY1} and proceed the proof by contradiction. Suppose  that there exist $\epsilon\rightarrow 0,\,\bar r_\epsilon\to r_{0}, \bar{y}''_{\epsilon}\to  y''_{0}, \lambda_{\epsilon}\in
[L_{0}\epsilon^{-\frac{1}{N-2s}},L_{1}\epsilon^{-\frac{1}{N-2s}}]$ and
$\varphi_{\epsilon}$ solving problem \eqref{2.3} for
$h=h_{\epsilon},\lambda=\lambda_{\epsilon},\bar r=\bar r_{\epsilon},\bar{y}''=\bar{y}''_{\epsilon}$ with
$\|h_{\epsilon}\|_{**}\rightarrow 0$ and $\|\varphi_{\epsilon}\|_{*}\geq c>0.$ Without loss of generality, we
may assume that $\|\varphi_{\epsilon}\|_{*}=1$.

We have
\begin{equation}\label{2.4}
\begin{array}{ll}
|\varphi_\epsilon(y)|&\leq C\ds\int_{\R^N} \frac{1}{|y-z|^{N-2s}}Z_{\bar r,\bar{y}'',\lambda}^{2^{*}_{s}-2+\epsilon}(z)|\varphi_\epsilon(z)|dz\\
&\qquad+
C\ds\int_{\R^N} \frac{1}{|y-z|^{N-2s}}\Big( |h_{\epsilon}(z)|+\Big|\ds\sum_{l=1}^{N}c_{l}\ds\sum_{j=1}^{m}U_{x_{j},\lambda}^{2^{*}_{s}-2}(z)Z_{j,l}(z)\Big|\Big)dz,\\
&:=I_1+I_2+I_{3},
 \end{array}
\end{equation}
where $C$ defines some positive constant.

For the first term $I_1$, using Lemma \ref{lemb3}, we can prove
\begin{equation}\label{2.5}
\begin{array}{ll}
 I_1
&\leq C\|\varphi_\epsilon\|_{*}\,\ds\int_{\R^N} \frac{1}{|y-z|^{N-2s}}Z_{\bar r,\bar{y}'',\lambda}^{2^{*}_{s}-2+\epsilon}(z)\sum_{j=1}^{m}\frac{\lambda^{\frac{N-2s}{2}}}
{(1+\lambda|z-x_{j}|)^{\frac{N-2s}{2}+\tau}}dz
\\
&\leq C\|\varphi_\epsilon\|_{*}\lambda^{\frac{N-2s}{2}}
\ds\sum_{j=1}^{m}\frac{1}{(1+\lambda|y-x_{j}|)^{\frac{N-2s}{2}+\tau+\sigma}}.
 \end{array}
\end{equation}


For the second term $I_2$, applying Lemma \ref{lemb2}, we have
\begin{equation}
\begin{split}
I_2&\leq C\|h_{\epsilon}\|_{**}\ds\int_{\R^N} \sum\limits_{j=1}^{m}\frac{\lambda^{\frac{N+2s}{2}}}{|y-z|^{N-2s}(1+\lambda|z-x_{j}|)^{\frac{N+2s}{2}+\tau}}dz
\\
&=C\lambda^{\frac{N+2s}{2}}\|h_{\epsilon}\|_{**}\ds\sum\limits_{j=1}^{m}\int_{\R^N}\frac{\lambda^{N-2s}}{|\lambda(z-y)|^{N-2s}}\frac{1}{(1+\lambda|z-x_{j}|)^{2s+\frac{N-2s}{2}+\tau}}dz
\\
&=C\lambda^{\frac{N+2s}{2}}\|h_{\epsilon}\|_{**}\ds\sum\limits_{j=1}^{m}\int_{\R^N}\frac{\lambda^{N-2s}}{|\lambda(z-x_j)-\lambda(y-x_j))|^{N-2s}}\frac{1}{(1+\lambda|z-x_{j}|)^{2s+\frac{N-2s}{2}+\tau}}dz
\\
&=C\lambda^{\frac{N+2s}{2}}\lambda^{N-2s}\|h_{\epsilon}\|_{**}\ds\sum\limits_{j=1}^{m}\int_{\R^N}\frac{1}{|t-\lambda(y-x_j))|^{N-2s}}\frac{1}{(1+t)^{2s+\frac{N-2s}{2}+\tau}}\frac{1}{\lambda^{N}}dt
\\
& \leq C\|h_{\epsilon}\|_{**}\ds\sum\limits_{j=1}^{m}\frac{\lambda^{\frac{N-2s}{2}}}{(1+\lambda|y-x_{j}|)^{\frac{N-2s}{2}+\tau}}.
\end{split}
\end{equation}
Also, by Lemma \ref{lemb2} we have
\begin{align}
|I_{3}|
&\leq C\sum_{l=1}^{N}c_{l}\ds\sum_{j=1}^{m}\ds\int_{\R^N} \frac{1}{|y-z|^{N-2s}}U_{x_{j},\lambda}^{2_{s}^{*}-2}(z)\frac{\lambda^{\frac{N-2s}{2}+n_l}}{(1+\lambda|z-x_j|)^{N-2s}}dz \nonumber
\\
&\leq \lambda^{\frac{N-2s}{2}+n_l}C\sum_{l=1}^{N}c_{l}\ds\sum_{j=1}^{m}\ds\int_{\R^N} \frac{1}{|y-z|^{N-2s}}\frac{\lambda^{2s}}{(1+\lambda|z-x_j|)^{4s}}\frac{1}{(1+\lambda|z-x_j|)^{N-2s}}dz \nonumber
\\
&\leq \lambda^{\frac{N-2s}{2}+n_l}C\sum_{l=1}^{N}c_{l}\ds\sum_{j=1}^{m}\ds\int_{\R^N} \frac{1}{|y-z|^{N-2s}}\frac{\lambda^{2s}}{(1+\lambda|z-x_j|)^{N+2s}}dz
\\
&\leq C\sum\limits_{l=1}^{N}|c_{l}|
\ds\sum_{j=1}^{m}\frac{\lambda^{\frac{N-2s}{2}+n_{l}}}{(1+\lambda|y-x_{j}|)^{\frac{N-2s}{2}+\tau}}, \nonumber
\end{align}
where $n_{1}=-1$, $n_{l}=1$, $l=2,\cdots,N$.

Therefore, we have
\begin{equation}\label{normalofvarphi}
\begin{array}{ll}
 &\Big(\sum\limits_{j=1}^{m}\frac{\lambda^{\frac{N-2s}{2}}}{(1+\lambda|y-x_{j}|)^{\frac{N-2s}{2}+\tau}}\Big)^{-1}|\varphi_\epsilon(z)|\\[5mm]
 \leq&C\|\varphi_\epsilon\|_{*}\ds\frac{\sum\limits_{j=1}^{m}\frac{1}{(1+\lambda|y-x_{j}|)^{\frac{N-2}{2}+\tau+\sigma}}}
{\sum\limits_{j=1}^{m}\frac{1}{(1+\lambda|y-x_{j}|)^{\frac{N-2}{2}+\tau}}}+C\|h_{\epsilon}\|_{**}+C\sum\limits_{l=1}^{N}|c_{l}|\,\lambda^{n_l}.
 \end{array}
\end{equation}

Next, we will estimate $c_{l},l=1,\cdots,N.$ Multiplying both sides of \eqref{2.3} by $Z_{1,k}\,(k=1,\cdots,N)$ and integrating, we get
\begin{equation}\label{2.8}
\begin{split}
&\ds\sum_{l=1}^{N}c_{l}\sum_{j=1}^{m} \langle U_{x_{j},\lambda}^{2_{s}^{*}-2}Z_{j,l},Z_{1,k} \rangle \\
=&
\bigl\langle \ds(-\Delta)^{s} \varphi_{\epsilon}
-(2_{s}^{*}-1+\epsilon)K(r,y'')Z_{\bar r,\bar{y}'',\lambda}^{2_{s}^{*}-2+\epsilon}
\varphi_\epsilon,Z_{1,k}\big\rangle-\langle h_{\epsilon}, Z_{1,k}\rangle.
 \end{split}
 \end{equation}
First of all, there exists a  constant $\bar{c}>0$ such that
 \begin{equation}\label{2.14'}
 \sum_{j=1}^{m}\langle
U_{x_{j},\lambda}^{2_{s}^{*}-2}Z_{j,l},Z_{1,k}\rangle
= ( \bar{c}  +o(1)) \delta_{lk}\lambda^{n_l+n_k}.
 \end{equation}
From Lemma \ref{lemb1}, we obtain that
\begin{equation}\label{2.8'}
 \begin{split}
\big|\big\langle h_{\epsilon}(y), Z_{1,k}(y)\big\rangle\big|
\leq \,&C\|h_{\epsilon}\|_{**}\ds\int_{\R^N}\frac{\lambda^{\frac{N-2s}{2}+n_{k}}}{(1+\lambda|y-x_{1}|)^{N-2s}}
\sum_{j=1}^{m}\frac{\lambda^{\frac{N+2s}{2}}}{(1+\lambda|y-x_{j}|)^{\frac{N+2s}{2}+\tau}} \,dy\\
=  \,&C\, \lambda^{n_{k}}\|h_{\epsilon}\|_{**}\Big\{\int_{\R^N}\frac{\lambda^{N}}{(1+\lambda|y-x_{1}|)^{\frac{3N-2s}{2}+\tau}} \,dy
\\
&\qquad+\int_{\R^N}\frac{\lambda^{\frac{N-2s}{2}+n_{k}}}{(1+\lambda|y-x_{1}|)^{N-2s}}
\sum_{j=2}^{m}\frac{\lambda^{\frac{N+2s}{2}}}{(1+\lambda|y-x_{j}|)^{\frac{N+2s}{2}+\tau}} \,dy\Big\}
\\
\leq \, & C \lambda^{n_{k}}\|h_{\epsilon}\|_{**}\Big( C + C\sum_{j=2}^{m}\frac1{(\lambda|x_j-x_1|
)^\tau}\Big)
\\
\leq \,& C \lambda^{n_{k}}\|h_{\epsilon}\|_{**}.
 \end{split}
\end{equation}

On the other hand, direct calculation gives
\begin{equation}\label{2.9.5}
 \Big|\bigl\langle \ds(-\Delta)^{s} \varphi_\epsilon
-(2_{s}^{*}-1+\epsilon)K(r,y'')Z_{\bar r,\bar{y}'',\lambda}^{2_{s}^{*}-2+\epsilon}
\varphi_\epsilon,Z_{1,k}\bigr\rangle \Big|=O\Bigl(
\frac{\lambda^{n_{k}}\|\varphi_\epsilon\|_{*}}{{\lambda^{1+\iota}}}\Bigr),
 \end{equation}
whose proof we put in Appendix \ref{sd}.

 Combining \eqref{2.14'}, \eqref{2.8'} and \eqref{2.9.5}, we have
 \begin{equation}\label{2.16}
 |c_{l}|\leq C \frac{1}{\lambda^{n_{l}}}\Big(\frac{\|\varphi_\epsilon\|_{*}}{\lambda^{1+\iota}}+\|h_{\epsilon}\|_{**}\Big).
 \end{equation}
Then by \eqref{normalofvarphi} and $\|\varphi_\epsilon\|_{*}=1$,
that there is $R>0$ such that
 \begin{equation}\label{2.12}
 \|\lambda^{-\frac{N-2s}{2}}\varphi_\epsilon\|_{L^\infty(B_{R/\lambda}(x_{j}))}\geq a>0,
 \end{equation}
 for some $j$. But $\tilde{\varphi}_\epsilon(y)=\lambda^{-\frac{N-2s}{2}}\varphi_\epsilon(\frac{y}{\lambda}+x_{j})$ converges uniformly in any compact set to a solution $u$ of
\begin{equation}\label{2.13}
(-\Delta)^{s}  u-(2_{s}^{*}-1)U_{0,\Lambda}^{2_{s}^{*}-2}u =0,\,\,\,\text{in}\,\,\,
\R^{N},
 \end{equation}
 for some $\Lambda\in[\Lambda_{1},\Lambda_{2}]$ and $u$ is perpendicular to the kernel of \eqref{2.13}. So $u=0$. This is a contradiction to \eqref{2.12}.
 \end{proof}

From Lemma \ref{lem2.1}, applying the same argument as in the proof of Proposition 4.1 in \cite{dfm}, we can
prove the following result:
 \begin{lem}\label{lem2.2}
 There exist $\epsilon_{0}>0$ and a constant $C>0$ independent of $\epsilon$, such that for $\epsilon\in (0,\epsilon_{0})$ and
 all $h_{\epsilon}\in L^{\infty}(\R^{N})$, problem \eqref{2.3} has a unique solution $\varphi_\epsilon\equiv L_{\epsilon}(h_{\epsilon})$. Moreover,
 \begin{equation}\label{2.10}
 \|L_{\epsilon}(h_{\epsilon})\|_{*}\leq C\|h_{\epsilon}\|_{**},\,\,\,\,|c_{l}|\leq C \epsilon^{\frac{n_l}{N-2s}}\|h_{\epsilon}\|_{**}.
 \end{equation}
  \end{lem}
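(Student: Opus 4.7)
The plan is to deduce Lemma \ref{lem2.2} from Lemma \ref{lem2.1} by a standard Fredholm-alternative argument, following \cite{dfm}. First I would set
\[
E := \Bigl\{\varphi \in H_s : \sum_{j=1}^{m}\int_{\R^N} U_{x_j,\lambda}^{2^{*}_{s}-2} Z_{j,l}\,\varphi\, dy = 0,\ l = 1,\ldots,N\Bigr\},
\]
equipped with the $D^{1,s}(\R^N)$ inner product. Since $(-\Delta)^s Z_{j,l} = (2^{*}_{s}-1) U_{x_j,\lambda}^{2^{*}_{s}-2} Z_{j,l}$, the orthogonality constraints in \eqref{2.3} are exactly $D^{1,s}$-orthogonality of $\varphi$ to the span of $\{Z_{j,l}\}$, so $E$ is a closed subspace of $H_s$. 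Writing $\Pi : H_s \to E$ for the orthogonal projection and invoking the Riesz representation theorem, \eqref{2.3} is equivalent to an operator equation
\[
\varphi - T_\epsilon \varphi = \tilde h_\epsilon \quad \text{in } E,
\]
where $T_\epsilon$ is $\Pi$ composed with multiplication by $(2^{*}_{s}-1+\epsilon) K(r,y'') Z_{\bar r,\bar y'',\lambda}^{2^{*}_{s}-2+\epsilon}$ and the inverse fractional Laplacian, and $\tilde h_\epsilon \in E$ depends continuously and linearly on $h_\epsilon$.

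Next I would verify that $T_\epsilon$ is compact on $E$: the potential $Z_{\bar r,\bar y'',\lambda}^{2^{*}_{s}-2+\epsilon}$ is bounded and decays at infinity (like $|y|^{-4s}$ to leading order), so the corresponding multiplication operator from $D^{1,s}(\R^N)$ to its dual is compact, a standard consequence of fractional Sobolev embeddings via the harmonic extension combined with an elementary infinity-truncation. The Fredholm alternative then reduces solvability of $(I - T_\epsilon)\varphi = \tilde h_\epsilon$ to injectivity. Injectivity, however, is immediate from Lemma \ref{lem2.1}: if $(I - T_\epsilon)\varphi = 0$, then $\varphi$ solves \eqref{2.3} with $h_\epsilon = 0$ and some multipliers $c_l$, whence $\|\varphi\|_* \leq C \|0\|_{**} = 0$. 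Hence $I - T_\epsilon$ is invertible, giving existence and uniqueness of $\varphi_\epsilon =: L_\epsilon(h_\epsilon)$ for every $h_\epsilon \in L^\infty(\R^N)$, and the estimate $\|L_\epsilon(h_\epsilon)\|_* \leq C \|h_\epsilon\|_{**}$ is exactly the content of Lemma \ref{lem2.1}.

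Finally, the bound on the multipliers $c_l$ comes for free by reusing inequality \eqref{2.16}, which was already established in the proof of Lemma \ref{lem2.1}: inserting $\|\varphi_\epsilon\|_* \leq C \|h_\epsilon\|_{**}$ into
\[
|c_l| \leq \frac{C}{\lambda^{n_l}}\Bigl(\frac{\|\varphi_\epsilon\|_*}{\lambda^{1+\iota}} + \|h_\epsilon\|_{**}\Bigr),
\]
together with the parameter scaling $\lambda \in [L_0 \epsilon^{-\frac{1}{N-2s}}, L_1 \epsilon^{-\frac{1}{N-2s}}]$ (so that $\lambda^{-n_l} \leq C \epsilon^{\frac{n_l}{N-2s}}$ for each of $n_l = \pm 1$), yields $|c_l| \leq C \epsilon^{\frac{n_l}{N-2s}}\|h_\epsilon\|_{**}$. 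The genuinely non-routine input is Lemma \ref{lem2.1} itself; the main technical point I would watch carefully is the compactness verification for $T_\epsilon$ over all of $\R^N$, where Rellich--Kondrachov does not apply directly and one must exploit the spatial decay of the potential. This, however, is standard and mirrors \cite{dfm}, so the remainder of the argument is essentially bookkeeping.
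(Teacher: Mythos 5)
Your argument is correct and is essentially the same as the paper's: the authors simply invoke Lemma \ref{lem2.1} together with "the same argument as in the proof of Proposition 4.1 in \cite{dfm}", which is exactly the Fredholm-alternative scheme you spell out (constrained space, compactness of the potential term, injectivity from the a priori estimate), and your derivation of the bound on $c_l$ from \eqref{2.16} combined with $\lambda\sim\epsilon^{-1/(N-2s)}$ matches the intended reasoning.
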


 Now we consider
 \begin{equation}\label{nonlinearequation}
 \left\{
 \begin{array}{ll}
&(-\Delta)^{s} (Z_{\bar r,\bar{y}'',\lambda}+\varphi)=
K(r,y'')(Z_{\bar r,\bar{y}'',\lambda}+\varphi)_+^{2^{*}_{s}-1+\epsilon}
+\sum_{l=1}^{N}c_{l}\sum_{j=1}^{m}U_{x_{j},\lambda}^{2^{*}_{s}-2}Z_{j,l},\text{in}\,\,\,\,\R^{N},
\\
&\varphi\in H_{s},\,\,\,\int_{\R^N}
\sum\limits_{j=1}^{m} U_{x_{j},\lambda}^{2^{*}_{s}-2}Z_{j,l}\varphi=0,\;l=1,2,\cdots,N.
 \end{array}
 \right.
\end{equation}
In the rest of this section, we devote ourselves to prove the following proposition by
using the contraction mapping theorem.

\begin{prop}\label{prop2.3}
There exist $\epsilon_{0}>0$ and a constant $C>0$, independent of $\epsilon$, such that for each $\epsilon\in (0,\epsilon_{0})$, $\lambda\in[L_{0}\epsilon^{-\frac{1}{N-2s}},L_{1}\epsilon^{-\frac{1}{N-2s}}]$, $\bar r\in[r_{0}-\theta,r_{0}+\theta],\bar{y}''\in
B_{\theta}(y''_{0})$, where $\theta>0$ small, \eqref{nonlinearequation} has a
unique solution $\varphi=\varphi_{\bar r,\bar{y}'',\lambda}\in
H_{s}$ satisfying
\begin{equation}\label{2.22}
\|\varphi\|_{*}\leq
C\epsilon^{\frac{1+\iota}{N-2s}},\,\,\,\,\, |c_{l}|\leq
C\epsilon^{\frac{1+n_l+\iota}{N-2s}},
\end{equation}
where $\iota>0$ is a small constant.
\end{prop}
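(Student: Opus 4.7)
The plan is to recast \eqref{nonlinearequation} as a fixed point problem and apply Lemma \ref{lem2.2} together with the contraction mapping theorem. Writing $u=Z_{\bar r,\bar y'',\lambda}+\varphi$ and expanding the nonlinearity, equation \eqref{nonlinearequation} is equivalent to
\[
 (-\Delta)^s\varphi-(2^{*}_s-1+\epsilon)K(r,y'')Z_{\bar r,\bar y'',\lambda}^{2^{*}_s-2+\epsilon}\varphi
 = \ell_\epsilon + N_\epsilon(\varphi)+\sum_{l=1}^{N}c_l\sum_{j=1}^{m}U_{x_j,\lambda}^{2^{*}_s-2}Z_{j,l},
\]
where
\[
 \ell_\epsilon := K(r,y'')Z_{\bar r,\bar y'',\lambda}^{2^{*}_s-1+\epsilon}-\sum_{j=1}^{m}U_{x_j,\lambda}^{2^{*}_s-1}
\]
is the error produced by the approximate solution, and
\[
 N_\epsilon(\varphi):=K(r,y'')\bigl[(Z_{\bar r,\bar y'',\lambda}+\varphi)_+^{2^{*}_s-1+\epsilon}-Z_{\bar r,\bar y'',\lambda}^{2^{*}_s-1+\epsilon}-(2^{*}_s-1+\epsilon)Z_{\bar r,\bar y'',\lambda}^{2^{*}_s-2+\epsilon}\varphi\bigr]
\]
is the purely nonlinear remainder. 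By Lemma \ref{lem2.2} the linear problem has a bounded inverse $L_\epsilon$ from the $\|\cdot\|_{**}$-topology to the $\|\cdot\|_{*}$-topology, so the task reduces to finding a fixed point of
\[
 \mathcal T(\varphi):=L_\epsilon\bigl(\ell_\epsilon+N_\epsilon(\varphi)\bigr)
\]
in a small ball $\mathcal E:=\{\varphi\in H_s:\|\varphi\|_{*}\le C_*\epsilon^{(1+\iota)/(N-2s)}\}$ for a sufficiently large constant $C_*$.

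First I would establish the a priori estimate $\|\ell_\epsilon\|_{**}\le C\epsilon^{(1+\iota)/(N-2s)}$. This combines four ingredients: (i) the fact that $(-\Delta)^sU_{x_j,\lambda}=U_{x_j,\lambda}^{2^{*}_s-1}$ so one only needs to compare the nonlinearities; (ii) the exponent perturbation, which produces a factor $(U_{x_j,\lambda})^\epsilon\sim\lambda^{\frac{N-2s}{2}\epsilon}\sim 1$ by \eqref{la} plus a logarithmic error of order $\epsilon\log(1+\lambda|y-x_j|)$; (iii) the coefficient error $K(r,y'')-1$, which is $O(|(r,y'')-(r_0,y_0'')|^2)=O(\epsilon^{2(1+\iota)/(N-2s)})$ in view of $(K_2)$ and \eqref{r}; and (iv) the interaction estimate for sums of bubbles, where one uses the pointwise bound
\[
 \Bigl(\sum_{j=1}^m U_{x_j,\lambda}\Bigr)^{2^{*}_s-1}-\sum_{j=1}^m U_{x_j,\lambda}^{2^{*}_s-1}
 \le C\sum_{j\neq k}U_{x_j,\lambda}^{2^{*}_s-2}U_{x_k,\lambda}
\]
together with the minimal distance $|x_j-x_k|\ge c/m\gtrsim \epsilon^{(N-2s-2)/(N-2s)^2}$ and Lemmas in Appendix \ref{sa}. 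Adding the contributions gives the stated bound on $\|\ell_\epsilon\|_{**}$; the choice of $\tau=(N-2s-2)/(N-2s)$ is precisely what makes these interaction sums convergent.

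Next I would bound $N_\epsilon(\varphi)$. Since $2^{*}_s-1+\epsilon>1$ only for $N<4s+2$ (so in general near $1$), a second order Taylor expansion gives $|N_\epsilon(\varphi)|\lesssim Z_{\bar r,\bar y'',\lambda}^{2^{*}_s-3+\epsilon}\varphi^2+|\varphi|^{2^{*}_s-1+\epsilon}$ when $2^{*}_s-1+\epsilon\ge 2$, and $|N_\epsilon(\varphi)|\lesssim |\varphi|^{2^{*}_s-1+\epsilon}$ otherwise. In either case, using the definition of $\|\cdot\|_*$ and the pointwise bounds on $Z_{\bar r,\bar y'',\lambda}$, one obtains $\|N_\epsilon(\varphi)\|_{**}\le C\|\varphi\|_*^{\min(2,2^{*}_s-1)}$ and $\|N_\epsilon(\varphi_1)-N_\epsilon(\varphi_2)\|_{**}\le C\bigl(\|\varphi_1\|_*+\|\varphi_2\|_*\bigr)^{\min(2,2^{*}_s-1)-1}\|\varphi_1-\varphi_2\|_*$. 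Combined with Lemma \ref{lem2.2} this yields $\|\mathcal T(\varphi)\|_*\le C(\|\ell_\epsilon\|_{**}+\|\varphi\|_*^{\min(2,2^{*}_s-1)})\le C_*\epsilon^{(1+\iota)/(N-2s)}$ on $\mathcal E$ if $C_*$ is chosen large and $\epsilon$ is small; similarly $\mathcal T$ is a contraction on $\mathcal E$.

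The Banach fixed point theorem then produces the unique $\varphi=\varphi_{\bar r,\bar y'',\lambda}\in\mathcal E$, with $\|\varphi\|_*\le C\epsilon^{(1+\iota)/(N-2s)}$, and the estimate $|c_l|\le C\epsilon^{(1+n_l+\iota)/(N-2s)}$ follows from the second bound in \eqref{2.10} applied to $\ell_\epsilon+N_\epsilon(\varphi)$ together with $\lambda\sim\epsilon^{-1/(N-2s)}$. The main technical obstacle is ingredient (iv) in the estimate of $\|\ell_\epsilon\|_{**}$: one has to show that the sum over the $m\sim\epsilon^{-(N-2s-2)/(N-2s)^2}$ bubbles of the interaction terms, weighted by the $\|\cdot\|_{**}$ norm with the sharp parameter $\tau$, remains of the order $\epsilon^{(1+\iota)/(N-2s)}$. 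This is exactly the computation for which the dimension restrictions in Remark \ref{re-1} are imposed, and this is where the paper's Appendix lemmas are essential.
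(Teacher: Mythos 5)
Your overall strategy coincides with the paper's: rewrite \eqref{nonlinearequation} as the fixed point equation $\varphi=L_\epsilon(N(\varphi))+L_\epsilon(l_\epsilon)$, prove $\|N(\varphi)\|_{**}\le C\|\varphi\|_*^{\min(2,2^*_s-1+\epsilon)}$ and $\|l_\epsilon\|_{**}\le C\epsilon^{\frac{1+\iota}{N-2s}}$, and conclude by the contraction mapping theorem, reading off $|c_l|$ from \eqref{2.10}. Your three-way splitting of $l_\epsilon$ (exponent perturbation, coefficient error, bubble interactions) is exactly the paper's $J_1+J_3+J_2$ in Lemma \ref{lem2.5}.

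There is, however, one step in your sketch that would fail as written. In ingredient (iii) you assert that $K(r,y'')-1=O(|(r,y'')-(r_0,y_0'')|^2)=O(\epsilon^{2(1+\iota)/(N-2s)})$ ``in view of $(K_2)$ and \eqref{r}''. But the factor $K(y)-1$ in $J_3$ is evaluated at the integration variable $y$, which ranges over all of $\R^N$, whereas \eqref{r} only constrains the bubble centers $(\bar r,\bar y'')$; moreover $K$ is only assumed $C^3$ in a small ball around $y_0$ and merely bounded elsewhere, so $K(y)-1$ is in general $O(1)$ away from $y_0$ and no Taylor expansion is available there. The correct argument (as in \eqref{16-7-2-1}--\eqref{16-7-2}) splits into the region $|(r,y'')-(r_0,y_0'')|\le\sigma\epsilon^{(\frac12+\iota)/(N-2s)}$, where the quadratic Taylor bound applies, and the complementary region, where one must instead exploit the decay $(1+\lambda|y-x_j|)^{-1}\le C\epsilon^{(\frac12-\iota)/(N-2s)}$ of the bubbles; the resulting exponent inequality $\frac{\frac12-\iota}{N-2s}\bigl(\frac{N+2s}{2}-\tau\bigr)-\frac{1+\iota}{N-2s}\ge 0$ is precisely where the hypothesis $s>\frac{3-\sqrt{N^2-6N+13}}{2}$ enters. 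Relatedly, you attribute both dimension restrictions of Remark \ref{re-1} to the interaction estimate (iv); in fact only $2s>\tau$ is used there (in \eqref{c-1} and \eqref{c-2}), while the other restriction is used exactly in the outer-region part of the $K-1$ term that your sketch omits.
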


We first rewrite \eqref{nonlinearequation} as
\begin{equation}\label{nonlinearequation1}
 \left\{
 \begin{array}{ll}
&(-\Delta)^{s} \varphi
-(2^{*}_{s}-1+\epsilon)K(r,y'')
Z_{\bar r,\bar{y}'',\lambda}^{2^{*}_{s}-2+\epsilon}
 \varphi
 \\[1mm]
 &\qquad \qquad  =N( \varphi)+l_{\epsilon}+\sum_{l=1}^{N}c_{l}\sum_{j=1}^{m}U_{x_{j},\lambda}^{2^{*}_{s}-2}Z_{j,l},\,\,\text{in}\,\,\,\,\R^{N},\\[1mm]
& \varphi\in H_{s},\,\,\,\ds\int_{\R^N}
\sum\limits_{j=1}^{m} U_{x_{j},\lambda}^{2^{*}_{s}-2}Z_{j,l} \varphi=0,\,\,\;l=1,\cdots,N,
 \end{array}
 \right.
\end{equation}
where
\[
N(\varphi) =K(r,y'')
\Big[\big(Z_{\bar r,\bar{y}'',\lambda}+\varphi\big)_+^{2^{*}_{s}-1+\epsilon}- Z_{\bar r,\bar{y}'',\lambda}^{2^{*}_{s}-1+\epsilon}-\big(2^{*}_{s}-1+\epsilon\big)
Z_{\bar r,\bar{y}'',\lambda}^{2^{*}_{s}-2+\epsilon}\varphi\Big],
\]
and
\begin{align}
l_{\epsilon}= \Big[K(r,y'') Z_{\bar r,\bar{y}'',\lambda }^{2^{*}_{s}-1+\epsilon}
-\sum_{j=1}^{m} U_{x_{j},\lambda}^{2^{*}_{s}-1}\Big].
\end{align}

In order to apply the contraction mapping theorem to prove Proposition \ref{prop2.3}, we need to estimate $N(\varphi)$ and
$l_{\epsilon}$ respectively.

\begin{lem}\label{lem2.4}
If ${N\geq 4}$, then
$$
||N(\varphi)||_{**}\leq C\|\varphi\|_{*}^{\min(2^{*}_{s}-1+\epsilon,2)}.
$$
\end{lem}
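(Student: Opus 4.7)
The plan is to apply the standard elementary pointwise inequality for differences of powers. Set $p = 2^{*}_{s}-1+\epsilon > 1$. Then for all $y$,
$$
|N(\varphi)(y)| \le C\cdot\begin{cases} |\varphi(y)|^{p}, & 1<p\le 2,\\ Z_{\bar r,\bar y'',\lambda}^{p-2}(y)\,\varphi(y)^{2}+|\varphi(y)|^{p}, & p>2,\end{cases}
$$
because $K$ is bounded and $N(\varphi)$ is the quadratic remainder in the Taylor expansion of $t\mapsto (Z+t)_+^{p}$ at $t=0$. These two regimes match exactly the $\min(p,2)$ exponent in the statement of Lemma~\ref{lem2.4}, so it suffices to treat them separately.

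To convert each pointwise inequality into a $\|\cdot\|_{**}$-bound I would substitute the $*$-norm control $|\varphi(y)|\le \|\varphi\|_{*}\lambda^{(N-2s)/2}\sum_{j}(1+\lambda|y-x_{j}|)^{-(N-2s)/2-\tau}$ and the pointwise form $Z_{\bar r,\bar y'',\lambda}(y)\le C\sum_{j}\lambda^{(N-2s)/2}(1+\lambda|y-x_{j}|)^{-(N-2s)}$. The crucial arithmetic identity is
$$
\tfrac{p(N-2s)}{2}=\tfrac{N+2s}{2}+\tfrac{\epsilon(N-2s)}{2},
$$
so $\lambda^{p(N-2s)/2}=\lambda^{(N+2s)/2}\cdot\lambda^{\epsilon(N-2s)/2}$, and the last factor is uniformly bounded by Remark~\ref{re1}, formula~\eqref{la}. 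This produces exactly the $\lambda$-power appearing in the definition of $\|\cdot\|_{**}$. It then remains to absorb a $p$-th power of a sum of weights into a single weighted sum, by an inequality of the form
$$
\Bigl(\sum_{j=1}^{m}(1+\lambda|y-x_{j}|)^{-\alpha}\Bigr)^{p}\le C\sum_{j=1}^{m}(1+\lambda|y-x_{j}|)^{-\alpha'},
$$
with $\alpha'\ge (N+2s)/2+\tau$; such an inequality is available because $p\tau>\tau$ for $p>1$, leaving spare decay (this is the content of Lemmas~\ref{lemb1}--\ref{lemb3} in the appendix). In the subquadratic case $p\le 2$ this finishes the proof and yields $\|\varphi\|_{*}^{p}$.

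The main obstacle is the super-quadratic cross term $Z^{p-2}\varphi^{2}$, which I expect to do the real work: one has to plug the $*$-bound for $\varphi$ in twice, multiply by the pointwise estimate for $Z^{p-2}$, expand the resulting triple sum, and carefully pair cross-centered factors. Each resulting mixed-center term produces decay of total exponent $(p-2)(N-2s)+2\bigl((N-2s)/2+\tau\bigr)=N+2s+2\tau$, and the definition $\tau=\frac{N-2s-2}{N-2s}$ is used to check this dominates the $**$-weight exponent $(N+2s)/2+\tau$ with enough slack to sum over $j$ using the appendix lemmas. Together with \eqref{la} to absorb the $\lambda^{\epsilon(N-2s)/2}$ factor, the outcome is bounded by $C\|\varphi\|_{*}^{2}$ times the $**$-weight, giving the claimed estimate. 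The $|\varphi|^{p}$ piece that also occurs for $p>2$ is controlled by factoring $|\varphi|^{p}=|\varphi|^{2}\cdot|\varphi|^{p-2}$ and using that $|\varphi|^{p-2}\le C\|\varphi\|_{*}^{p-2}$ times a bounded weight, which is at worst $\|\varphi\|_{*}^{2}$ once $\|\varphi\|_{*}$ is small.
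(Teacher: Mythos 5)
Your proposal follows essentially the same route as the paper's proof: the pointwise Taylor-remainder bound split into the cases $2^{*}_{s}<3$ and $2^{*}_{s}\ge 3$, substitution of the $\|\cdot\|_{*}$-weight for $\varphi$ and the bubble decay for $Z_{\bar r,\bar y'',\lambda}$, the discrete H\"older inequality to absorb the $p$-th power of the weighted sum into a single sum carrying the $\|\cdot\|_{**}$-weight, and \eqref{la} to absorb the stray factor $\lambda^{\epsilon(N-2s)/2}$. One arithmetic slip worth correcting: the total decay exponent of the cross term $Z^{p-2}\varphi^{2}$ is $(2^{*}_{s}-3+\epsilon)(N-2s)+(N-2s)+2\tau=4s+2\tau+\epsilon(N-2s)$, not $N+2s+2\tau$; the correct value still exceeds $\tfrac{N+2s}{2}+\tau$ because the superquadratic case only arises when $N\le 6s$, so your argument closes, but the paper sidesteps this bookkeeping entirely by bounding $(1+\lambda|y-x_{j}|)^{-(N-2s)}\le (1+\lambda|y-x_{j}|)^{-\frac{N-2s}{2}-\tau}$, which reduces the superquadratic case to the same single H\"older step used in the subquadratic one.
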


\begin{proof}
We have
\begin{align}
N(\varphi)\,\leq\,\left\{ \begin{array}{ll}
C\,|\varphi|^{2^{*}_{s}-1+\epsilon},\,& 2^{*}_{s}< 3,
\\[2mm]
|Z_{\bar r,\bar{y}'',\lambda}|^{2^{*}_{s}-3+\epsilon}\varphi^{2}+
C|\varphi|^{2^{*}_{s}-1+\epsilon},\,&2^{*}_{s}\geq 3.
\end{array} \right.
\end{align}

 First, we consider $2^{*}_{s}< 3.$
  By the discrete H\"{o}lder inequality,  we get
 \begin{align*}
 |N(\varphi)|\leq\,&C\|\varphi\|_{*}^{2^{*}_{s}-1+\epsilon}
 \Big(\sum_{j=1}^{m}\frac{\lambda^{\frac{N-2s}{2}}}{(1+\lambda|y-x_{j}|)^{\frac{N-2s}{2}+\tau}}\Big)^{2^{*}_{s}-1+\epsilon} \\
\leq\,&
 C\|\varphi\|_{*}^{2^{*}_{s}-1+\epsilon}\sum_{j=1}^{m}\frac{\lambda^{\frac{N+2s}{2}+\frac{N-2s}{2}\epsilon}}{(1+\lambda|y-x_{j}|)^{\frac{N+2s}{2}+\tau}}
 \\
 &\times\Big(\sum_{j=1}^{m}\frac{1}{(1+\lambda|y-x_{j}|)^{\frac{N+2s+(N-2s)\epsilon}{4s+(N-2s)\epsilon}(\frac{N-2s}{2}+\tau)-\frac{N-2s}{4s+(N-2s)\epsilon}(\frac{N+2s}{2}+\tau)}}\Big)^{\frac{4s}{N-2s}+\epsilon}\\
 \leq\,&  C\|\varphi\|_{*}^{2^{*}_{s}-1+\epsilon}\lambda^{\frac{N+2s}{2}}\sum_{j=1}^{m}\frac{1}{(1+\lambda|y-x_{j}|)^{\frac{N+2s}{2}+\tau}},
 \end{align*}
 and we have used Remark \ref{re1}.
 Therefore,
 $$
 \|N(\varphi)\|_{**}\leq C\|\varphi\|_{*}^{2^{*}_{s}-1+\epsilon}.
 $$

Similarly, if $2^{*}_{s}\geq 3$, we have
\begin{align*}
|N(\varphi)|\leq\,& C |Z_{\bar r,\bar{y}'',\lambda}|^{2^{*}_{s}-3+\epsilon}\varphi^{2}+
C|\varphi|^{2^{*}_{s}-1+\epsilon}\\
\leq\,&
 C\|\varphi\|_{*}^{2}\Big(\sum_{j=1}^{m}\frac{\lambda^{\frac{N-2s}{2}}}{(1+\lambda|y-x_{j}|)^{\frac{N-2s}{2}+\tau}}\Big)^2
 \Big(\sum_{j=1}^{m}\frac{\lambda^{\frac{N-2s}{2}}}{(1+\lambda|y-x_{j}|)^{N-2s}}\Big)^{2^{*}_{s}-3+\epsilon}\\
 &
 + C\|\varphi\|_{*}^{2^{*}_{s}-1+\epsilon}\Big(\sum_{j=1}^{m}\frac{\lambda^{\frac{N-2s}{2}}}{(1+\lambda|y-x_{j}|)^{\frac{N-2s}{2}+\tau}}\Big)^{2^{*}_{s}-1+\epsilon}
 \\
 \leq\,&
 C(\|\varphi\|_{*}^{2}+\|\varphi\|_{*}^{2^{*}_{s}-1+\epsilon})\Big(\sum_{j=1}^{m}\frac{\lambda^{\frac{N-2s}{2}}}{(1+\lambda|y-x_{j}|)^{\frac{N-2s}{2}+\tau}}\Big)^{2^{*}_{s}-1+\epsilon}
 \\
 \leq\,&
  C\|\varphi\|_{*}^{2}\sum_{j=1}^{m}\frac{\lambda^{\frac{N+2s}{2}}}{(1+\lambda|y-x_{j}|)^{\frac{N+2s}{2}+\tau}}.
\end{align*}
Hence, we obtain $||N(\varphi)||_{**}\leq C\|\varphi\|_{*}^{\min(2^{*}_{s}-1+\epsilon,2)}.$
\end{proof}

Next, we will give the estimate of $l_{\epsilon}.$
\begin{lem}\label{lem2.5}
If $N\geq 4,\max\{\frac{N+1-\sqrt{N^{2}-2N+9}}{4},\frac{3-\sqrt{N^{2}-6N+13}}{2}\}<s<1$,  then there is a small constant $\iota>0$, such that
$$
\|l_{\epsilon}\|_{**}\leq
C\epsilon^{\frac{1+\iota}{N-2s}}.
$$

\end{lem}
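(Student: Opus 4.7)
The plan is to split $\R^N$ into the Voronoi-type regions $\Omega_j=\{y:|y-x_j|\le |y-x_k|\ \text{for all }k\}$ around the bubble centers $x_1,\dots,x_m$, and on each $\Omega_j$ decompose
\[
l_\epsilon \;=\; J_1 + J_2 + J_3,
\]
where
\begin{align*}
J_1 &= K(r,y'')\,Z_{\bar r,\bar y'',\lambda}^{2^*_s-1}\!\Big(Z_{\bar r,\bar y'',\lambda}^{\,\epsilon}-1\Big),\\
J_2 &= K(r,y'')\Big[Z_{\bar r,\bar y'',\lambda}^{2^*_s-1}-\sum_{k=1}^{m}U_{x_k,\lambda}^{\,2^*_s-1}\Big],\\
J_3 &= \bigl[K(r,y'')-1\bigr]\sum_{k=1}^{m}U_{x_k,\lambda}^{\,2^*_s-1}.
\end{align*}
Each of these will be bounded pointwise by $C\epsilon^{(1+\iota)/(N-2s)}\lambda^{(N+2s)/2}\sum_{j}(1+\lambda|y-x_j|)^{-(N+2s)/2-\tau}$, which is the right-hand side of the $\|\cdot\|_{**}$-inequality we need.

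For $J_1$, I would use $|a^{\epsilon}-1|\le C\epsilon|\ln a|$. Since $Z_{\bar r,\bar y'',\lambda}^{2^*_s-1}(y)\le C\lambda^{(N+2s)/2}\sum_{j}(1+\lambda|y-x_j|)^{-(N+2s)}$, and $|\ln Z_{\bar r,\bar y'',\lambda}|$ grows only logarithmically in $\lambda\sim\epsilon^{-1/(N-2s)}$, we get $\|J_1\|_{**}\le C\epsilon|\ln\epsilon|$, which is negligible compared with the target $\epsilon^{(1+\iota)/(N-2s)}$ for $N\ge 4$, $s<1$ and small $\iota$. For $J_3$, since $(r_0,y_0'')$ is a critical point of $K$ with $K(r_0,y_0'')=1$, together with \eqref{r} I would Taylor-expand
\[
|K(r,y'')-1|\le C\Bigl(|(r,y'')-(r_0,y_0'')|^{2}\Bigr)\le C\bigl(|y-x_j|^{2}+\epsilon^{2(1+\iota)/(N-2s)}\bigr)
\]
in a neighborhood of $x_j$, and use the straightforward bound $|K-1|\le C$ far from $(r_0,y_0'')$ (where the bubble sum already provides all the required decay). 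The extra $|y-x_j|^{2}$ factor against $U_{x_j,\lambda}^{2^*_s-1}$ contributes a factor $\lambda^{-2}\sim\epsilon^{2/(N-2s)}$, which is stronger than what is needed; this is the place where the condition $N>4+2\tau-2s$ comes in, ensuring that the resulting weight integral can be absorbed into $\sum_{j}(1+\lambda|y-x_j|)^{-(N+2s)/2-\tau}$.

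The main obstacle is $J_2$. On $\Omega_j$ the dominant term is $U_{x_j,\lambda}$, and I would use the algebraic inequality
\[
\Big|\Big(\textstyle\sum_{k}U_{x_k,\lambda}\Big)^{2^*_s-1}-\sum_{k}U_{x_k,\lambda}^{\,2^*_s-1}\Big|\le C\,U_{x_j,\lambda}^{\,2^*_s-2}\sum_{k\ne j}U_{x_k,\lambda}\;+\;C\sum_{k\ne j}U_{x_k,\lambda}^{\,2^*_s-1},
\]
then estimate each $U_{x_k,\lambda}(y)$, for $k\ne j$ and $y\in\Omega_j$, using $|y-x_k|\ge\tfrac12|x_j-x_k|$. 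The geometric spacing $|x_j-x_k|\sim \bar r\,|j-k|/m$ together with the choice $m\sim\epsilon^{-(N-2s-2)/(N-2s)^2}$ and $\lambda\sim\epsilon^{-1/(N-2s)}$ gives $m/\lambda\sim\epsilon^{2/(N-2s)^2}$, and summing $\sum_{k\ne j}(\lambda|x_j-x_k|)^{-(N-2s)}$ against the weight (as in Lemma~\ref{lemb3}) produces precisely the factor $\epsilon^{(1+\iota)/(N-2s)}$. This is exactly where the hypothesis $2s>\tau$ (equivalent to $s>(N+1-\sqrt{N^2-2N+9})/4$) is used, since it makes the resulting series convergent and allows the tail bubbles to be absorbed into the $\|\cdot\|_{**}$-weight with an extra small power of $\lambda^{-1}$.

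The most delicate part of the bookkeeping will be the transition region inside $\Omega_j$ where $|y-x_j|$ is comparable to some $|y-x_k|$: near--bubble and far--bubble regimes must be treated with different majorants and then pieced together uniformly in $y$ and in $j$, while all the dependencies on $\epsilon$ are tracked carefully through $\lambda=\lambda_\epsilon$ and $m=m_\epsilon$. Once all three pieces are bounded, collecting the estimates yields $\|l_\epsilon\|_{**}\le C\epsilon^{(1+\iota)/(N-2s)}$.
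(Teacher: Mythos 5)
Your proposal follows essentially the same route as the paper: the identical three-term decomposition $l_\epsilon=J_1+J_2+J_3$, the mean-value/logarithm bound for $J_1$, the sector-plus-H\"older interaction estimate for $J_2$ (where $\tau<2s$ enters through the choice of the exponent $\alpha$), and a Taylor expansion of $K$ about its critical point for $J_3$ (where $N>4+2\tau-2s$ enters). The only organizational difference is in $J_3$: the paper splits according to whether $|(r,y'')-(r_0,y_0'')|$ is below or above $\sigma\epsilon^{\frac{1/2+\iota}{N-2s}}$ so that the quadratic term is already pointwise small in the inner region, whereas you keep the pointwise bound $|K-1|\le C(|y-x_j|^2+\epsilon^{2(1+\iota)/(N-2s)})$ throughout the Taylor neighborhood and trade the resulting $\lambda^{-2}$ against two lost powers of the weight $(1+\lambda|y-x_j|)^{-1}$ — both versions produce the same constraint and the same final bound.
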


\begin{proof}
Recall that
\begin{align}\label{j1}
l_{\epsilon}=\,& K(y) Z_{\bar r,\bar{y}'',\lambda }^{2^{*}_{s}-1+\epsilon}
-\sum_{j=1}^{m} U_{x_{j},\lambda}^{2^{*}_{s}-1} \nonumber
\\
=\,&K(y)\Big( Z_{\bar r,\bar{y}'',\lambda }^{2^{*}_{s}-1+\epsilon}-Z_{\bar r,\bar{y}'',\lambda }^{2^{*}_{s}-1}\Big)
+K(y)\Big(Z_{\bar r,\bar{y}'',\lambda }^{2^{*}_{s}-1}-\sum_{j=1}^{m} U_{x_{j},\lambda}^{2^{*}_{s}-1}\Big)
 +\big(K(y)-1\big)\sum_{j=1}^{m} U_{x_{j},\lambda}^{2^{*}_{s}-1}
 \nonumber\\
:=\,&J_{1}\,+\, J_{2}\,+\,J_{3}.
\end{align}

By the mean value theorem and the discrete H\"{o}lder inequality, it follows from Remark \ref{re1} that
\begin{align}\label{j11}
 J_{1}&\leq C\epsilon Z_{\bar r,\bar{y}'',\lambda }^{2^{*}_{s}-1+\kappa\epsilon}|\ln Z_{\bar r,\bar{y}'',\lambda } | \nonumber
\\
&\leq C\epsilon \Big(\sum_{j=1}^{m}\frac{\lambda^{\frac{N-2s}{2}}}{(1+\lambda|y-x_{j}|)^{N-2s}}\Big)^{2^{*}_{s}-1+\kappa\epsilon}
\ln\sum_{j=1}^{m}\frac{\lambda^{\frac{N-2s}{2}}}{(1+\lambda|y-x_{j}|)^{N-2s}} \nonumber
\\
&\leq C\epsilon \ln\lambda^{\frac{N-2s}{2}}\sum_{j=1}^{m}\frac{\lambda^{\frac{N+2s}{2}+\frac{N-2s}{2}\kappa\epsilon}}{(1+\lambda|y-x_{j}|)^{\frac{N+2s}{2}+\tau}}
\Big(\sum_{j=1}^{m}\frac{1}{(1+\lambda|y-x_{j}|)^{(N-2s-\frac{(N+2s)\tau}{2(2^{*}_{s}-1+\kappa\epsilon)})\frac{2^{*}_{s}-1+\kappa\epsilon}{2^{*}_{s}-2+\kappa\epsilon}}}\Big)^{2^{*}_{s}-2+\kappa\epsilon} \nonumber
\\
&\leq
 C\epsilon \ln\frac{1}{\epsilon}\sum_{j=1}^{m}\frac{\lambda^{\frac{N+2s}{2}+\frac{N-2s}{2}\kappa\epsilon}}{(1+\lambda|y-x_{j}|)^{\frac{N+2s}{2}+\tau}}
\Big(\sum_{j=1}^{m}\frac{1}{(1+\lambda|y-x_{j}|)^{(N-2s-\frac{(N+2s)\tau}{2(2^{*}_{s}-1+\kappa\epsilon)})\frac{2^{*}_{s}-1+\kappa\epsilon}{2^{*}_{s}-2+\kappa\epsilon}}}\Big)^{2^{*}_{s}-2+\kappa\epsilon}
\nonumber
\\
&\leq C\epsilon \ln\frac{1}{\epsilon}\sum_{j=1}^{m}\frac{\lambda^{\frac{N+2s}{2}}}{(1+\lambda|y-x_{j}|)^{\frac{N+2s}{2}+\tau}} \nonumber
\\
&\leq C\epsilon^{\frac{1+\iota}{N-2s}} \sum_{j=1}^{m}\frac{\lambda^{\frac{N+2s}{2}}}{(1+\lambda|y-x_{j}|)^{\frac{N+2s}{2}+\tau}},
\end{align}
where $0<\kappa<1$ and we have used the fact that $N>2s+2$ to implies $\epsilon \ln\frac{1}{\epsilon}\leq C\epsilon^{\frac{1+\iota}{N-2s}}$.

In order to estimate $J_{12},$
first we define
$$
\Omega_{j}=\Big\{y:y=(y',y'')\in \R^{2}\times \R^{N-2},\Big\langle \frac{y'}{|y'|},\frac{x'_{j}}{|x'_{j}|}\Big\rangle\geq \cos\frac{\pi}{m}\Big\},\, j=1,\cdots,m.
$$
By symmetry, we can assume that $y\in\Omega_{1}$, then
$
|y-x_{j}|\geq|y-x_{1}|.$
Note that
\begin{align}\label{2.5.2}
& |J_{2}|\leq C\Big[\Big(\ds\sum_{j=2}^{m}
U_{x_{j},\lambda}\Big)^{2^{*}_{s}-1}+
U_{x_{1},\lambda}^{2^{*}_{s}-2}\ds\sum_{j=2}^{m}U_{x_{j},\lambda}+\ds\sum_{j=2}^{m}
U_{x_{j},\lambda}^{2^{*}_{s}-1} \Big]\nonumber\\
\leq &C\Big(\ds\sum_{j=2}^{m}\ds\frac{\lambda^{\frac{N-2s}{2}}}{(1+\lambda|y-x_{j}|)^{N-2s}}\Big)^{2^{*}_{s}-1}
+\ds\frac{C\lambda^{2s}}{(1+\lambda|y-x_{1}|)^{4s}}\ds\sum_{j=2}^{m}\frac{\lambda^{\frac{N-2s}{2}}}{(1+\lambda|y-x_{j}|)^{N-2s}}
.
 \end{align}

Note that  $\tau<2s$ implies that $\frac{2}{N-2s}\frac{N+2s}{4s}\big(\frac{N-2s}{2}-\tau\frac{N-2s}{N+2s}\big)\frac{4s}{N-2s}>1.$
As in \cite{WY1}, using H\"{o}lder inequality, we can derive
\begin{equation}\label{c-1}
\begin{split}
	&\Big(\ds\sum_{j=2}^{m}\frac{1}{(1+\lambda|y-x_{j}|)^{N-2s}}\Big)^{2^{*}_{s}-1}\\
	&\leq \sum_{j=2}^{m}\frac{1}{(1+\lambda|y-x_{j}|)^{\frac{N+2s}{2}+\tau}}
	\Big(\ds\sum_{j=2}^{m}\frac{1}{(1+\lambda|y-x_{j}|)^{\frac{N+2s}{4s}(\frac{N-2s}{2}-\tau\frac{N-2s}{N+2s})}}\Big)^{\frac{4s}{N-2s}}
	\\
	&\leq C\big(\frac{m}{\lambda}\big)^{\frac{N+2s}{4s}\big(\frac{N-2s}{2}-\tau\frac{N-2s}{N+2s}\big)\frac{4s}{N-2s}}
	\sum_{j=2}^{m}\frac{1}{(1+\lambda|y-x_{j}|)^{\frac{N+2s}{2}+\tau}}
	\\
	&\leq C\epsilon^{\frac{1+\iota}{N-2s}}\sum_{j=2}^{m}\frac{1}{(1+\lambda|y-x_{j}|)^{\frac{N+2s}{2}+\tau}}.
\end{split}
\end{equation}

By Lemma \ref{lemb1}, taking $0<\alpha\leq \min\{\frac{N+2s}{2},N-2s\},$ we obtain that for any $y\in\Omega_{1}$ and $j>1$
\begin{equation}\label{2.5.3}
\begin{array}{ll}
\ds\frac{1}{(1+\lambda|y-x_{1}|)^{4s}}\frac{1}{(1+\lambda|y-x_{j}|)^{N-2s}}
\leq
C\ds\frac{1}{(1+\lambda|y-x_{1}|)^{N+2s-\alpha}}\frac{1}{|\lambda
(x_{j}- x_{1})|^{\alpha}}.
 \end{array}
\end{equation}
Since $\tau<2s,$ we can choose $\alpha>\frac{N-2s}{2}$ satisfying $N+2s-\alpha\geq
\frac{N+2s}{2}+\tau.$

Then
\begin{equation}\label{c-2}
\begin{split}
&\ds\frac{1}{(1+\lambda|y-x_{1}|)^{4s}}\ds\sum_{j=2}^{m}\frac{1}{(1+\lambda|y-x_{j}|)^{N-2s}}\\
&\leq
\frac{C}{(1+\lambda|y-x_{1}|)^{N+2s-\alpha}}\Big(\ds\frac{m}{\lambda}\Big)^{\alpha}
\leq\frac{C}{(1+\lambda|y-x_{1}|)^{N+2s-\alpha}}\epsilon^{\frac{2\alpha}{(N-2s)^{2}}}
\\
&\leq\frac{C}{(1+\lambda|y-x_{1}|)^{\frac{N+2s}{2}+\tau}}\epsilon^{\frac{1+\iota}{N-2s}}.
\end{split}
\end{equation}

 Thus we have proved
\begin{equation}\label{2.5.4}
\|J_{2}\|_{**}\leq C\epsilon^{\frac{1+\iota}{N-2s}}.
\end{equation}

Next, we will estimate the term $J_{3}$. Using the Taylor expansion, in a neighborhood of $y_{0}$ we can rewrite $K(y)$ in the following form
\begin{equation*}
K(y)=K(y_0)+ \nabla K(y_0)\cdot(y-y_0)+ \frac{1}{2} \frac{\partial^2 K (y_0)}{\partial y_i \partial y_j}(y_{i}-y_{0i})(y_{j}-y_{0j}) +o(|y-y_0|^2).
\end{equation*}
In the region $\check{B}:=| (r,y'')-(r_0, y_0'')|\leq \sigma\epsilon^{\frac{\frac{1}{2}+\iota}{N-2s}},$
where $\sigma>0$ is a fixed constant.  Then we have
\begin{equation}\label{16-7-2-1}
\begin{split}
|J_{3}|
&=\Big|\Big(\ds\sum_{i,j=1}^{N}\frac{1}{2}\frac{\partial^{2} K(y_{0})}{\partial y_{i}\partial y_{j}} (y_{i}-y_{0i})(y_{j}-y_{0j})+o(|y-y_{0}|^{2})\Big)\sum_{j=1}^{m} U_{x_{j},\lambda}^{2_{s}^{*}-1}\Big|\\
&\leq C\epsilon^{\frac{1+2\iota}{N-2s}}
\ds\sum_{j=1}^{m}\frac{\lambda^{\frac{N+2s}{2}}}{(1+\lambda|y-x_{j}|)^{N+2s}}
\leq
C\epsilon^{\frac{1+\iota}{N-2s}}\ds\sum_{j=1}^{l}\frac{
\lambda^{\frac{N+2s}{2}}} {(1+\lambda|y-x_{j}|)^{\frac{N+2s}2+\tau}}.
\end{split}
\end{equation}

On the other hand, in the region $\sigma\epsilon^{\frac{\frac{1}{2}+\iota}{N-2s}}\leq| (r,y'')-(r_0, y_0'')|\leq 2\delta$, we have
$$
|y-x_j|\geq |(r,y'')-(r_0, y_0'')|-|(r_0, y_0'')-(\bar{r},\bar{y}'')|\geq \frac{\sigma}{2}\epsilon^{\frac{\frac{1}{2}+\iota}{N-2s}},
$$
which implies that
\begin{equation}\label{add1}
\frac 1{1+\lambda|y-x_j|}\leq C\epsilon^{\frac{\frac{1}{2}-\iota}{N-2s}}.
\end{equation}
Then we have
\begin{equation}\label{16-7-2}
\begin{split}
|J_{3}| &\leq C
\ds\sum_{j=1}^{m}\frac{\lambda^{\frac{N+2s}{2}}}{(1+\lambda|y-x_{j}|)^{N+2s}}\\
&=
C\epsilon^{\frac{1+\iota}{N-2s}}
\ds\sum_{j=1}^{m}\frac{
\lambda^{\frac{N+2s}{2}}}
{(1+\lambda|y-x_{j}|)^{\frac{N+2s}{2}+\tau}}
\frac{1}
{\epsilon^{\frac{1+\iota}{N-2s}}(1+\lambda|y-x_{j}|)^{\frac{N+2s}{2}-\tau}}
\\
&\leq
C\epsilon^{\frac{1+\iota}{N-2s}}\ds\sum_{j=1}^{m}\frac{
\lambda^{\frac{N+2s}{2}}} {(1+\lambda|y-x_{j}|)^{\frac{N+2s}2+\tau}}
\epsilon^{\frac{\frac{1}{2}-\iota}{N-2s}(\frac{N+2s}{2}-\tau)-\frac{1+\iota}{N-2s}}\\
&\leq
C\epsilon^{\frac{1+\iota}{N-2s}}\ds\sum_{j=1}^{m}\frac{
\lambda^{\frac{N+2s}{2}}} {(1+\lambda|y-x_{j}|)^{\frac{N+2s}2+\tau}},
\end{split}
\end{equation}
since $\frac{\frac{1}{2}-\iota}{N-2s}(\frac{N+2s}{2}-\tau)-\frac{1+\iota}{N-2s}\geq0.$

Combining \eqref{16-7-2-1} and \eqref{16-7-2}, we obtain that
\begin{equation}\label{2.5.5}
\|J_{3}\|_{**}\leq C\epsilon^{\frac{1+\iota}{N-2s}}.
\end{equation}

As a result, from \eqref{2.5.4} to \eqref{2.5.5}, we have
$$
\|l_{\epsilon}\|_{**}\leq
C\epsilon^{\frac{1+\iota}{N-2s}}.
$$
\end{proof}

\begin{proof}[\textbf{Proof of Proposition~\ref{prop2.3}}]
First we recall that $\lambda\in[L_{0}\epsilon^{-\frac{1}{N-2s}},L_{1}\epsilon^{-\frac{1}{N-2s}}].$
Set
$$
\mathcal {N}=\Big\{w:w\in
C(\R^{N})\cap H_{s},\,\|w\|_{*}\leq \epsilon^\frac{1}{N-2s}, \int_{\R^N} \sum_{j=1}^m U_{x_{j},\lambda}^{2^{*}_{s}-2}Z_{j,l} w=0\Big\},
$$
where $l=1,\cdots,N.$
Then \eqref{nonlinearequation1} is equivalent to
\begin{equation}\label{A}
\varphi=\mathcal
{A}(\varphi):=L_{\epsilon}(N(\varphi))+L_{\epsilon}(l_{\epsilon}),
\end{equation}
where $L_{\epsilon}$ is defined in Lemma \ref{lem2.2}.
We will prove that $\mathcal {A}$ is a contraction map from
$\mathcal {N}$ to $\mathcal {N}.$

In fact, since
\begin{equation}
\begin{split}
\|\mathcal{A}(\varphi)\|_{*}&\leq C\,\|L_{\epsilon}(N(\varphi))\|_*\,+\,C\,\|L_{\epsilon}(l_{\epsilon})\|_*\\
&
\leq
C\,\Big[\|N(\varphi)\|_{**}\,+\,\|l_{\epsilon}\|_{**}\Big]
\\
&\leq
C\,\Big[\|\varphi\|_{*}^{\min\{2^{*}_{s}-1+\epsilon,2\}}\,+\epsilon^{\frac{1+\iota}{N-2s}}\Big]
\\
&\leq
\epsilon^{\frac{1}{N-2s}},
\end{split}
\end{equation}
therefore
 $\mathcal {A}$ maps $\mathcal {N}$ to $\mathcal {N}.$

On the other hand, we have
\begin{align}
|N'(t)|\,\leq\,\left\{ \begin{array}{ll}
C\,|t|^{2^{*}_{s}-2+\epsilon},\,& 2^{*}_{s}< 3,
\\[2mm]
C\,\big(|Z_{\bar{r},\bar{y}'',\lambda}|^{2^{*}_{s}-3+\epsilon}|t|+|t|^{2^{*}_{s}-2+\epsilon}\big),\,&2^{*}_{s}\geq 3.
\end{array} \right.
\end{align}
 If $2^{*}_{s}< 3$, $\forall \varphi_{1},\, \varphi_{2} \in \mathcal {N}$, by the discrete H\"{o}lder inequality and Remark \ref{re1}, we have
\begin{equation*}
\begin{split}
&|N(\varphi_{1})-N(\varphi_{2})|\\
	&\leq C
	(|\varphi_{1}|^{2^{*}_{s}-2+\epsilon}+|\varphi_{2}|^{2^{*}_{s}-2+\epsilon})
	|\varphi_{1}-\varphi_{2}|\\
	&\leq C
	(||\varphi_{1}||_{*}^{2^{*}_{s}-2+\epsilon}+||\varphi_{2}||_{*}^{2^{*}_{s}-2+\epsilon})
	||\varphi_{1}-\varphi_{2}||_{*}\Big(\sum_{j=1}^{m}\frac{\lambda^{\frac{N-2s}{2}}}{(1+\lambda|y-x_{j}|)^{\frac{N-2s}{2}+\tau}}\Big)^{2^{*}_{s}-1+\epsilon}
	\\
	&\leq C
	(||\varphi_{1}||_{*}^{2^{*}_{s}-2+\epsilon}+||\varphi_{2}||_{*}^{2^{*}_{s}-2+\epsilon})
	||\varphi_{1}-\varphi_{2}||_{*}\sum_{j=1}^{m}\frac{\lambda^{\frac{N+2s}{2}}}{(1+\lambda|y-x_{j}|)^{\frac{N+2s}{2}+\tau}}.
	\end{split}
\end{equation*}
Hence
\begin{equation*}
	\begin{split}
\|\mathcal {A}(\varphi_{1})-\mathcal
{A}(\varphi_{2})\|_{*}&=\|L_{\epsilon}(N(\varphi_{1}))-L_{\epsilon}(N(\varphi_{2}))\|_{*}
\\
&\leq C\|N(\varphi_{1})-N(\varphi_{2})\|_{**}
\\
&\leq C\epsilon^{\frac{4s}{(N-2s)^{2}}}||\varphi_{1}-\varphi_{2}||_{*}
\\
&\leq \frac{1}{2}||\varphi_{1}-\varphi_{2}||_{*}.
\end{split}
\end{equation*}
The case $2^{*}_{s}\geq 3$ can be discussed in a similar way.

 Hence $\mathcal {A}$ is a contraction map.
Now by the contraction mapping theorem, there exists a unique
$\varphi=\varphi_{\bar r,\bar y'',\lambda}\in \mathcal {N}$ such that
\eqref{A} holds. Moreover, by Lemma \ref{lem2.2}, Lemma \ref{lem2.4} and Lemma \ref{lem2.5},  we deduce
\begin{equation*}
\|\varphi\|_{*}\leq\|L_{\epsilon}(N(\varphi))\|_{*}+\|L_{\epsilon}(l_{\epsilon})\|_{*}
\leq C\|N(\varphi)\|_{**}+C\|l_{\epsilon}\|_{**}\leq
C\epsilon^{\frac{1+\iota}{N-2s}}.
\end{equation*}
Moreover, we get the estimate of $c_{l}$ from \eqref{2.10}.

\end{proof}

\section{Proof of the main result}\label{s3}

Let
\[
I(u)=\frac12 \int_{\R^N} |(-\Delta)^{\frac{s}{2}} u|^2 dy  -\frac1{2_{s}^{*}+\epsilon}\int_{\R^N}K(|y'|,y'')(u)_{+}^{2_{s}^{*}+\epsilon} dy.
\]
In this section, we will choose suitable $(\bar r, \bar y'', \lambda)$ so
that $Z_{\bar r,\bar{y}'',\lambda}+ \varphi_{\bar r,\bar{y}'',\lambda}$ is a solution of problem \eqref{equation}.  For this purpose, we need the following result.

\begin{prop}\label{l1-18-1}
Suppose that  $(\bar r, \bar y'', \lambda)$ satisfies
\\
\begin{equation}\label{identityofscaling}
\begin{split}
&\int_{\partial''\mathcal{B}_{\rho}^{+}(y_0)} t^{1-2s}\langle Y,\nabla\tilde{u}_\epsilon\rangle \frac{\partial\tilde{u}_\epsilon}{\partial\nu} dS
-\frac{1}{2}\int_{\partial''\mathcal{B}_{\rho}^{+}(y_0)}t^{1-2s}|\nabla\tilde{u}_\epsilon|^2\langle Y,\nu\rangle dS
\\
&+\frac{N-2s}{2}\int_{\mathcal{B}_{\rho}^{+}(y_0)}t^{1-2s}|\nabla\tilde{u}_\epsilon|^{2}dydt
+\frac{1}{2^{*}_{s}+\epsilon} \int_{\partial B_{\rho}(y_0)} K(r,y'')(u_\epsilon)_{+}^{2^{*}_{s}+\epsilon}\langle y,\nu\rangle ds
\\
&
-\frac{N}{2^{*}_{s}+\epsilon}\int_{ B_{\rho}(y_0)}K(r,y'') (u_\epsilon)_{+}^{2^*_{s}+\epsilon}dy
-\frac{1}{2^{*}_{s}+\epsilon}
\int_{B_{\rho}(y_0)}\langle \nabla K,y\rangle(u_\epsilon)_{+}^{2^*_{s}+\epsilon} dy
=0,
\end{split}
\end{equation}
\begin{equation}\label{identityoftranslation}
\begin{split}
&\int_{\partial''\mathcal{B}_{\rho}^{+}(y_0)}t^{1-2s}\frac{\partial\tilde{u}_\epsilon}{\partial \nu}\frac{\partial\tilde{u}_\epsilon}{\partial y_i} dS
-\frac{1}{2}\int_{\partial''\mathcal{B}_{\rho}^{+}(y_0)}t^{1-2s}\nabla|\tilde{u}_\epsilon|^2 \nu_idS
\\
&
+\frac{1}{2^{*}_{s}+\epsilon}\int_{\partial B_{\rho}(y_0)}K(y)(u_\epsilon)_{+}^{2^{*}_{s}+\epsilon}\nu_i ds
-\frac{1}{2^{*}_{s}+\epsilon}\int_{B_{\rho}(y_0)}\frac{\partial K(y)}{\partial y_i}(u_\epsilon)_{+}^{2^{*}_{s}+\epsilon} dy
=0,
\end{split}
\end{equation}
and
\begin{equation}\label{integralofum}
\int_{\mathbb R^N}\Big[(-\Delta)^{s} u_\epsilon
-
K(r,y'')(u_\epsilon)_+^{2^{*}_{s}-1+\epsilon} \Big]\frac{\partial Z_{\bar r,\bar{y}'',\lambda}}{\partial \lambda}=0,
\end{equation}
where $u_\epsilon= Z_{\bar r,\bar{y}'',\lambda}+\varphi_{\bar r,\bar{y}'',\lambda}$  and $B_\rho(y_0)= \bigl\{ (r, y''):\;  |(r,y'')- (r_0, y_0'')|\le \rho\bigr\}   $
, $\rho$ is a small positive constant. Then $c_{l}=0$, $l\,=\,1, \cdots, N$.
\end{prop}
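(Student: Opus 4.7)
The strategy is to view the three displayed identities as three linear relations on the $N$ Lagrange multipliers $(c_{1},\dots,c_{N})$, arranged so that the coefficient matrix is diagonally dominant to leading order and hence invertible. Write $V_{j,l}:=U_{x_{j},\lambda}^{2^{*}_{s}-2}Z_{j,l}$ for brevity. Since $u_{\epsilon}=Z_{\bar r,\bar y'',\lambda}+\varphi$ satisfies
\[
(-\Delta)^{s}u_{\epsilon}-K(u_{\epsilon})_{+}^{2^{*}_{s}-1+\epsilon}=\sum_{l=1}^{N}c_{l}\sum_{j=1}^{m}V_{j,l},
\]
substituting into \eqref{integralofum} immediately produces the first relation $\sum_{l}c_{l}\sum_{j,k}\int V_{j,l}\,Z_{k,1}\,dy=0$.

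For the local Pohozaev identities, I would derive them by multiplying the extension equation $\mathrm{div}(t^{1-2s}\nabla\tilde u_{\epsilon})=0$ by the vector fields $\langle Y,\nabla\tilde u_{\epsilon}\rangle+\tfrac{N-2s}{2}\tilde u_{\epsilon}$ (for scaling) and $\partial_{y_{i}}\tilde u_{\epsilon}$ (for translation in the $y_{i}$-direction), integrating over $\mathcal{B}_{\rho}^{+}(y_{0})$, and using the boundary condition $-\lim_{t\to 0}d_{s}t^{1-2s}\partial_{t}\tilde u_{\epsilon}=(-\Delta)^{s}u_{\epsilon}$ to convert the flat-face integral into $\int_{B_{\rho}(y_{0})}(-\Delta)^{s}u_{\epsilon}\cdot(\text{test})\,dy$. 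The $K(u_{\epsilon})_{+}^{2^{*}_{s}-1+\epsilon}$ portion, combined with the curved-boundary contributions, reproduces exactly \eqref{identityofscaling} and \eqref{identityoftranslation}, so the hypotheses force
\[
\sum_{l=1}^{N}c_{l}\int_{B_{\rho}(y_{0})}\sum_{j=1}^{m}V_{j,l}\,\Xi_{i}\,dy=0,
\]
with $\Xi_{0}=\langle y,\nabla u_{\epsilon}\rangle+\tfrac{N-2s}{2}u_{\epsilon}$ and $\Xi_{i}=\partial_{y_{i}}u_{\epsilon}$. The cylindrical symmetry built into $H_{s}$ collapses the $y_{1},y_{2}$-translation equations to a single radial relation, so together with the $\partial_{\lambda}$-equation above I end up with exactly $N$ equations for the $N$ unknowns $c_{l}$.

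Next I would replace $u_{\epsilon}$ by $Z_{\bar r,\bar y'',\lambda}$ inside each $\Xi_{i}$, controlling the remainder by the $\|\varphi\|_{*}$-estimate \eqref{2.22}. For $i\geq 3$ I use $\partial_{y_{i}}U_{x_{k},\lambda}=-Z_{k,i}$ directly; for the radial direction the derivative decomposes along $(\cos\theta_{k},\sin\theta_{k})$ and, after summation in $k$ together with the $2\pi/m$-symmetry, assembles into $\sum_{k}Z_{k,2}$. For the scaling field I use the identity $\lambda\partial_{\lambda}U_{x_{k},\lambda}=\langle y-x_{k},\nabla U_{x_{k},\lambda}\rangle+\tfrac{N-2s}{2}U_{x_{k},\lambda}$, so $\Xi_{0}$ reduces at leading order to $\lambda\sum_{k}Z_{k,1}$ plus translation-type pieces that pair orthogonally with the scaling kernel. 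Substituting these expansions and invoking the orthogonality relation \eqref{2.14'} together with the symmetric placement of the $x_{j}$'s produces a matrix $M=(M_{il})$ whose diagonal entries are of order $m\bar c\,\lambda^{2n_{l}}$ and whose off-diagonal entries are of strictly lower order. Hence $M$ is invertible and $c_{l}=0$ for all $l$.

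The main obstacle is the diagonal-dominance estimate in the last step: the remainder $\varphi$, the localization of the Pohozaev integrals to $B_{\rho}(y_{0})$ rather than all of $\R^{N}$, and the extra surface integrals on $\partial''\mathcal{B}_{\rho}^{+}(y_{0})$ arising from the nonlocality of $(-\Delta)^{s}$ all contribute error terms that must be shown to be of lower order than $m\bar c\,\lambda^{2n_{l}}$. Estimating these uses the decay weight encoded in $\|\cdot\|_{*}$, the smallness of $\|\varphi\|_{*}$ from Proposition \ref{prop2.3}, and the admissible range of $s$ demanded in Theorem \ref{thm1.1}.
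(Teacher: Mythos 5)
Your proposal follows essentially the same route as the paper: subtract the hypothesized identities from the true local Pohozaev identities for the extension problem to isolate the relations $\int_{B_{\rho}(y_0)}\sum_{l}c_{l}\sum_{j}U_{x_{j},\lambda}^{2^{*}_{s}-2}Z_{j,l}\,\Xi\,dy=0$, extend them to $\R^{N}$ with negligible error, replace $u_{\epsilon}$ by $Z_{\bar r,\bar y'',\lambda}$ and identify the test fields with $\lambda^{\pm1}\sum_{k}Z_{k,l}$ at leading order, and conclude from the near-diagonal structure (the paper's \eqref{1-7-3}--\eqref{3-7-3} and \eqref{30-18-1}--\eqref{32-18-1}) that $c_{l}=0$. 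The only cosmetic deviations are that you fold $\tfrac{N-2s}{2}\tilde u_{\epsilon}$ into the scaling multiplier rather than invoking the separate energy identity \eqref{p23}, and that the radial relation for $c_{2}$ comes from the $y'$-component of the scaling identity (as your later decomposition along $(\cos\theta_{k},\sin\theta_{k})$ correctly indicates) rather than from separate $y_{1},y_{2}$-translation identities, which are not among the hypotheses.
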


\begin{proof}
If \eqref{identityofscaling} and \eqref{identityoftranslation} hold, then it follows from \eqref{p1} and \eqref{p2} that
\begin{equation}\label{14-2-1}
\int_{B_{\rho}(y_0)}\sum_{l\,=\,1}^{N}c_{l}\ds\sum_{j\,=\,1}^{m}\,U_{x_{j},\lambda}^{2^{*}_{s}-2}\,Z_{j,l} \, \langle y, \nabla u_\epsilon\bigr\rangle dy
\,=\,0,
\end{equation}
and
\begin{equation}\label{14-2-2}
\int_{B_{\rho}(y_0)}\sum_{l\,=\,1}^{N}c_{l}\ds\sum_{j\,=\,1}^{m}\,U_{x_{j},\lambda}^{2^{*}_{s}-2}\,Z_{j,l} \, \, \frac{\partial u_\epsilon}{\partial y_i}dy
\,=\,0,\,(i=3,\cdots, N).
\end{equation}

By direct computations, we can check that
\begin{equation}\label{14-2-1-1}
\int_{B_{\rho}^{c}(y_0)}\sum_{l\,=\,1}^{N}c_{l}\ds\sum_{j\,=\,1}^{m}\,U_{x_{j},\lambda}^{2^{*}_{s}-2}\,Z_{j,l} \, \langle y, \nabla u_\epsilon\bigr\rangle dy
\,=\,o\bigl(m \lambda^{2}\bigr)\sum_{l\,=\,2}^N |c_l|+ o\,\big(m|c_1|\big),
\end{equation}
and
\begin{equation}\label{14-2-2-2}
\int_{B_{\rho}^{c}(y_0)}\sum_{l\,=\,1}^{N}c_{l}\ds\sum_{j\,=\,1}^{m}\,U_{x_{j},\lambda}^{2^{*}_{s}-2}\,Z_{j,l} \, \, \frac{\partial u_\epsilon}{\partial y_i} dy
\,=\,o\bigl(m \lambda^{2}\bigr)\sum_{l\,=\,2}^N |c_l|+ o\,\big(m|c_1|\big),\,(i=3,\cdots, N),
\end{equation}
whose proofs we put in Appendix \ref{sc}.

Also if \eqref{integralofum} holds, then by \eqref{nonlinearequation} we have
\begin{equation}\label{14-2-3}
\int_{\R^{N}}\sum_{l\,=\,1}^{N}c_{l}\ds\sum_{j\,=\,1}^{m}\,U_{x_{j},\lambda}^{2^{*}_{s}-2}\,Z_{j,l} \frac{\partial Z_{\bar r,\bar{y}'',\lambda}}{\partial \lambda}dy
\,=\,0.
\end{equation}

 From \eqref{14-2-1} to \eqref{14-2-3}, we get
\begin{equation}\label{14-2-1-again}
\sum_{l\,=\,1}^{N}c_{l}\ds\sum_{j\,=\,1}^{m}\int_{\R^N}\,U_{x_{j},\lambda}^{2^{*}_{s}-2}\,Z_{j,l} \, v
dy\,=\,o\bigl(m \lambda^{2}\bigr)\sum_{l\,=\,2}^N |c_l|+ o\,\big(m|c_1|\big)
\end{equation}
for $v\,=\, \bigl\langle y, \nabla u_\epsilon\bigr\rangle$, $v\,=\, \frac{\partial u_\epsilon}{\partial y_i},\,(i=3,\cdots, N)$
and $v\,=\, \frac{\partial Z_{\bar r,\bar{y}'',\lambda}}{\partial \lambda}.$

By direct computations, it is easy to obtain that
\begin{equation}\label{1-7-3}
\sum_{j\,=\,1}^{m}\int_{\R^N}\,U_{x_{j},\lambda}^{2^{*}_{s}-2}\,Z_{j,2} \bigl\langle y', \nabla_{y'} \,Z_{\bar r,\bar{y}'',\lambda}\bigr\rangle dy \,=\,  m( a_1+o(1))\lambda^{2},
\end{equation}

\begin{equation}\label{2-7-3}
\sum_{j\,=\,1}^{m}\int_{\R^N}\,U_{x_{j},\lambda}^{2^{*}_{s}-2}\,Z_{j,i}  \frac{\partial Z_{\bar r,\bar{y}'',\lambda}}{\partial y_i}dy\,=\, m (a_2+o(1))\,\lambda^{2}, \quad i\,=\,3,\cdots, N,
\end{equation}
and

\begin{equation}\label{3-7-3}
\sum_{j\,=\,1}^{m}\int_{\R^N}\,U_{x_{j},\lambda}^{2^{*}_{s}-2}\,Z_{j,1}  \frac{\partial Z_{\bar r,\bar{y}'',\lambda}}{\partial \lambda}dy\,=\,  \frac{m}{\lambda^{2}}(a_3+o(1)),
\end{equation}
for some constants $a_1\ne 0$, $a_2\ne 0$ and $a_3>0$.

For any functions  $f,\,\varphi\in H^1(\R^N)$, we have
\begin{equation}\label{20-18-1}
\int_{\R^N}  f \,\frac{ \partial \varphi}{\partial y_i} dy\,=\,-\int_{\R^N}   \varphi \,\frac{ \partial f}{\partial y_i} dy.
\end{equation}
Using \eqref{20-18-1}, we can prove from \eqref{2.22} that

\begin{equation}\label{4-7-3}
\sum_{l\,=\,1}^{N}c_{l}\ds\sum_{j\,=\,1}^{m}\int_{\R^N}\,U_{x_{j},\lambda}^{2^{*}_{s}-2}\,Z_{j,l} \,vdy
\,=\,o\bigl(m \lambda^{2}\bigr)\sum_{l\,=\,2}^N |c_l|+ o\,\big(m|c_1|\big),
\end{equation}
holds for  $v\,=\, \bigl\langle y,\nabla  \varphi_{\bar r,\bar{y}'',\lambda}\bigr\rangle$  and  $v\,=\, \frac{\partial \varphi_{\bar r,\bar{y}'',\lambda}}{\partial y_i}$.
Therefore,  from \eqref{14-2-1}, we obtain

\begin{equation}\label{21-2-1}
\sum_{l\,=\,1}^{N}c_{l}\ds\sum_{j\,=\,1}^{m}\int_{\R^N}\,U_{x_{j},\lambda}^{2^{*}_{s}-2}\,Z_{j,l}   v dy
\,=\,o\bigl(m \lambda^{2}\bigr)\sum_{l\,=\,2}^N |c_l|+ o\,\big(m|c_1|\big),
\end{equation}
holds for  $v\,=\, \bigl\langle y,\nabla  Z_{\bar r,\bar{y}'',\lambda}\bigr\rangle$  and  $v\,=\, \frac{\partial Z_{\bar r,\bar{y}'',\lambda}}{\partial y_i}$.

 From
\[
\bigl\langle y, \nabla Z_{\bar r,\bar{y}'',\lambda}\bigr\rangle\,=\,\bigl\langle y', \nabla_{y'} Z_{\bar r,\bar{y}'',\lambda}\bigr\rangle + \bigl\langle y'', \nabla_{y''} Z_{\bar r,\bar{y}'',\lambda}\bigr\rangle,
\]
 we find
\begin{equation}\label{30-18-1}
\begin{split}
&\sum_{l\,=\,1}^{N}c_{l}\ds\sum_{j\,=\,1}^{m}\int_{\R^N}U_{x_{j},\lambda}^{2^{*}_{s}-2}\,Z_{j,l}  \bigl\langle y, \nabla Z_{\bar r,\bar{y}'',\lambda}\bigr\rangle dy\\
\,=\,&
c_2\sum_{j\,=\,1}^{m}\int_{\R^N}\,U_{x_{j},\lambda}^{2^{*}_{s}-2}\,Z_{j,2}  \bigl\langle y', \nabla_{y'}\, Z_{\bar r,\bar{y}'',\lambda}\bigr\rangle dy  +o\,\big(m\lambda^{2} \big) \sum_{t\,=\,3}^N |c_t|+
o\big(m |c_1|\big),
\end{split}
\end{equation}
and

\begin{equation}\label{31-18-1}
\begin{split}
&\sum_{l\,=\,1}^{N}c_{l}\ds\sum_{j\,=\,1}^{m}\int_{\R^N}U_{x_{j},\lambda}^{2^{*}_{s}-2}Z_{j,l} \frac{\partial Z_{\bar r,\bar{y}'',\lambda}}{\partial y_i} dy\\
\,=\,&
c_i\sum_{j\,=\,1}^{m}\int_{\R^N}U_{x_{j},\lambda}^{2^{*}_{s}-2}Z_{j,i}  \frac{\partial Z_{\bar r,\bar{y}'',\lambda}}{\partial y_i}dy +o(m \lambda^{2}) \sum_{t\ne 1, \;i} |c_t|+
o(m |c_1|),\quad i\,=\,3,\cdots, N.
\end{split}
\end{equation}

Combining \eqref{21-2-1}, \eqref{30-18-1} and \eqref{31-18-1}, we are led to

\begin{equation}\label{30-7-3}
c_2\,\sum_{j\,=\,1}^{m}\int_{\R^N}U_{x_{j},\lambda}^{2^{*}_{s}-2}\,Z_{j,2}  \bigl\langle y', \nabla_{y'} Z_{\bar r,\bar{y}'',\lambda}\bigr\rangle dy \,=\,o\big(m\lambda^{2}\big ) \sum_{t\,=\,3}^N |c_t|+
o\big(m |c_1|\big),
\end{equation}
and

\begin{equation}\label{31-7-3}
c_i\,\sum_{j\,=\,1}^{m}\int_{\R^N}U_{x_{j},\lambda}^{2^{*}_{s}-2}\,Z_{j,i}  \frac{\partial Z_{\bar r,\bar{y}'',\lambda}}{\partial y_i} dy\,=\,o\big(m \lambda^{2}\big) \sum_{t\ne 1, \;i} |c_t|+
o\big(m |c_1|\big),\quad i\,=\,3,\cdots, N,
\end{equation}
which, together with \eqref{1-7-3} and \eqref{2-7-3}, imply

\begin{equation}\label{equationofci}
c_i\,=\, o\Bigl(\frac1{\lambda^{2}}\Bigr)\,c_1 , \quad i\,=\,2, \cdots, N.
\end{equation}

Now we have
\begin{equation}\label{32-18-1}
\begin{split}
0\,=\,&\sum_{l\,=\,1}^{N}c_{l}\ds\sum_{j\,=\,1}^{m}\int_{\R^N}U_{x_{j},\lambda}^{2^{*}_{s}-2}Z_{j,l} \frac{\partial Z_{\bar r,\bar{y}'',\lambda}}{\partial \lambda} dy\\
\,=\,&
c_1\sum_{j\,=\,1}^{m}\int_{\R^N}U_{x_{j},\lambda}^{2^{*}_{s}-2}Z_{j,1}  \frac{\partial Z_{\bar r,\bar{y}'',\lambda}}{\partial \lambda }  dy\\
\,=\,&m \frac{a_3 }{\lambda^{2}}c_1+o\bigl(\frac m{\lambda^2}\bigr) c_1.
\end{split}
\end{equation}
 So $c_1\,=\,0$. We also have $c_l=0$.
\end{proof}

Next, we will estimate \eqref{identityofscaling}, \eqref{identityoftranslation} and \eqref{integralofum}. Moreover, we will choose suitable $(\bar r, \bar y'', \lambda)$ so
that \eqref{identityofscaling}, \eqref{identityoftranslation} and \eqref{integralofum} holds. First of all, the following lemma gives the estimate of \eqref{integralofum}.

\begin{lem}\label{lem3.2}
We have
\begin{equation}\label{4-18-1}
\begin{split}
&\int_{\mathbb R^N}\bigl((-\Delta)^{s} u_\epsilon
-
K(r,y'')(u_\epsilon)_+^{2^{*}_{s}-1+\epsilon} \bigr)\frac{\partial Z_{\bar r,\bar{y}'',\lambda}}{\partial \lambda} dy
\\
\,=\,&m\Big(-\frac{B_{1}}{\lambda^{3}}
+\sum_{j=2}^{m}\frac{B_{2}}{\lambda^{N-2s+1}|x_{1}-x_{j}|^{N-2s}}
+O\big(\epsilon^{\frac{3+\iota}{N-2s}}\big)\Big) \\
\,=\,&m\Big(-\frac{B_{1}}{\lambda^{3}}+\frac{B_3 m^{N-2s}}{\lambda^{N-2s+1}}
+O\big(\epsilon^{\frac{3+\iota}{N-2s}}\big)\Big),
\end{split}
\end{equation}
where  $B_{i}>0$, $i\,=\,1, 2, 3$.
\end{lem}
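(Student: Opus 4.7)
The plan is to substitute $u_\epsilon=Z_{\bar r,\bar y'',\lambda}+\varphi_{\bar r,\bar y'',\lambda}$ and exploit the self-adjointness of $(-\Delta)^s$ together with the orthogonality of $\varphi$ in \eqref{nonlinearequation} to kill the linear-in-$\varphi$ contribution, then extract three identifiable main terms. Since $(-\Delta)^s Z_{\bar r,\bar y'',\lambda}=\sum_j U_{x_j,\lambda}^{2^*_s-1}$ and $(-\Delta)^s \partial_\lambda Z=(2^*_s-1)\sum_j U_{x_j,\lambda}^{2^*_s-2}Z_{j,1}$, the orthogonality $\int\sum_j U_{x_j,\lambda}^{2^*_s-2}Z_{j,1}\varphi=0$ eliminates the leading $\varphi$-term, yielding
\[
\int\bigl[(-\Delta)^s u_\epsilon-K u_\epsilon^{2^*_s-1+\epsilon}\bigr]\partial_\lambda Z\,dy=\int\Bigl[\sum_j U_{x_j,\lambda}^{2^*_s-1}-K Z^{2^*_s-1+\epsilon}\Bigr]\partial_\lambda Z\,dy+\mathcal{E},
\]
with $\mathcal{E}$ collecting the nonlinear remainder $R_\epsilon(\varphi)=(Z+\varphi)_+^{2^*_s-1+\epsilon}-Z^{2^*_s-1+\epsilon}-(2^*_s-1+\epsilon)Z^{2^*_s-2+\epsilon}\varphi$ estimated via Lemma \ref{lem2.4} and the discrepancy $\int K\bigl(Z^{2^*_s-2+\epsilon}-\sum_j U_{x_j,\lambda}^{2^*_s-2}\bigr)\varphi\,\partial_\lambda Z$; using $\|\varphi\|_*\le C\epsilon^{(1+\iota)/(N-2s)}$ from Proposition \ref{prop2.3} and the weighted convolution estimates in the appendix, $|\mathcal{E}|=O\bigl(m\,\epsilon^{(3+\iota)/(N-2s)}\bigr)$.

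The principal integral splits into three pieces. Term $(\mathrm{I})=\int\bigl[\sum_j U_{x_j,\lambda}^{2^*_s-1}-Z^{2^*_s-1}\bigr]\partial_\lambda Z$: the pointwise expansion $(\sum U_{x_j})^{2^*_s-1}-\sum U_{x_j}^{2^*_s-1}=(2^*_s-1)\sum_{i\ne j}U_{x_i}^{2^*_s-2}U_{x_j}+\text{lower}$, localization of $Z_{i,1}$ near $x_i$, the far-field $U_{x_j}(x_i)\sim C\lambda^{-(N-2s)/2}|x_i-x_j|^{-(N-2s)}$ for $j\ne i$, and the identity $\int U_{x_i,\lambda}^{2^*_s-2}Z_{i,1}\,dy\sim \lambda^{-(N-2s)/2-1}$ (from differentiating the $\lambda$-dependence of $\int U_{x_i,\lambda}^{2^*_s-1}dy$), combine after summation to produce $m\sum_{j=2}^m B_2/(\lambda^{N-2s+1}|x_1-x_j|^{N-2s})$ with $B_2>0$. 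Term $(\mathrm{II})=\int(1-K)Z^{2^*_s-1}\partial_\lambda Z$: using $K(y_0)=1$ and $\nabla K(y_0)=0$, I Taylor expand $K(y)=1+\tfrac12 D^2K(y_0)(y-y_0,y-y_0)+o(|y-y_0|^2)$, localize near $x_1$ where $Z\approx U_{x_1,\lambda}$, use spherical symmetry of $U_{x_1,\lambda}$ and $Z_{1,1}$ about $x_1$ to retain only the isotropic part $\tfrac{\Delta K(y_0)}{N}|y-x_1|^2$, and rescale $\tilde z=\lambda(y-x_1)$ to obtain
\[
\int(1-K)U_{x_1,\lambda}^{2^*_s-1}Z_{1,1}\,dy=-\frac{\Delta K(y_0)}{2N\lambda^3}\int|\tilde z|^2 U_{0,1}^{2^*_s-1}(\tilde z)\,\partial_\lambda U_{0,1}(\tilde z)\,d\tilde z+o(\lambda^{-3}).
\]
A direct Beta-function computation shows the radial integral is strictly negative; combined with $\Delta K(y_0)<0$ from $(K_{2})$ and summation over the $m$ bubbles, this yields the $-mB_1/\lambda^3$ contribution with $B_1>0$. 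Term $(\mathrm{III})=-\int K\bigl[Z^{2^*_s-1+\epsilon}-Z^{2^*_s-1}\bigr]\partial_\lambda Z\sim -\epsilon\int K Z^{2^*_s-1}\log Z\,\partial_\lambda Z$ is $O(\epsilon\log(1/\epsilon)\lambda^{-1})=O(\epsilon^{(3+\iota)/(N-2s)})$ since $\int U^{2^*_s-1}\partial_\lambda U=0$ and $N-2s>2$.

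The main obstacle is careful bookkeeping of secondary contributions: triple-indexed cross terms $\int U_{x_i}^{2^*_s-2}U_{x_j}Z_{k,1}$ with $i,j,k$ pairwise distinct must be shown subleading via the weighted convolutions of Lemmas \ref{lemb1}--\ref{lemb3}; the shift $|x_1-y_0|\le\epsilon^{(1+\iota)/(N-2s)}$ generated by expanding $K$ about $y_0$ rather than $x_1$ must be dominated using \eqref{r}; and the $\varphi$-dependent remainders must be controlled uniformly in $m$ to keep the per-bubble error at the claimed order. The final rewriting $\sum_{j=2}^m|x_1-x_j|^{-(N-2s)}=(B_3/B_2)m^{N-2s}$ follows from the regular $m$-gon geometry of the configuration $\{x_j\}_{j=1}^m$.
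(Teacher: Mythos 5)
Your proposal is correct and follows essentially the same route as the paper: the linear-in-$\varphi$ term is killed/controlled by self-adjointness plus the orthogonality constraints (this is exactly the content of \eqref{2.9.5}), the nonlinear remainder is absorbed into $O\big(m\epsilon^{\frac{3+\iota}{N-2s}}\big)$, and the principal integral is split into the bubble-interaction term (giving $B_2$), the Taylor expansion of $K$ about $y_0$ with $\Delta K(y_0)<0$ (giving $-B_1/\lambda^3$), and the $\epsilon$-perturbation of the exponent (a remainder), which is precisely the $F_1,F_2,F_3$ decomposition in Lemma \ref{lema.2}. The closing observation that $\sum_{j=2}^m|x_1-x_j|^{-(N-2s)}\sim c\,m^{N-2s}$ from the regular $m$-gon geometry also matches the paper's final step.
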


\begin{proof}

We have
\begin{align}\label{10-15-5}
&\ds\int_{\mathbb R^N}\bigl((-\Delta)^{s} u_\epsilon
-K(r,y'')
(u_\epsilon)_+^{2^{*}_{s}-1+\epsilon} \bigr)\frac{\partial Z_{\bar r,\bar{y}'',\lambda}}{\partial \lambda}dy \nonumber\\[1mm]
\,=\,&
\Big\langle I'(Z_{\bar r,\bar{y}'',\lambda}),\ds\frac{\partial
Z_{\bar r,\bar{y}'',\lambda}}{\partial \lambda}\Big\rangle
+m \Big \langle  \ds(-\Delta)^{s} \varphi
-(2^{*}_{s}-1+\epsilon)K(r,y'')Z_{\bar r,\bar{y}'',\lambda}^{2^{*}_{s}-2+\epsilon}
\varphi,\ds\frac{\partial
Z_{x_1,\lambda}}{\partial \lambda}\Bigr\rangle \nonumber
\\[1mm]
&-\ds\int_{\R^N} K(r,y'')\bigl( (Z_{\bar r,\bar{y}'',\lambda}+\varphi)_+^{2^{*}_{s}-1+\epsilon} -Z_{\bar r,\bar{y}'',\lambda}^{2^{*}_{s}-1+\epsilon}-(2^{*}_{s}-1+\epsilon) Z_{\bar r,\bar{y}'',\lambda}^{2^{*}_{s}-2+\epsilon}\varphi\bigr) \frac{\partial
Z_{\bar r,\bar{y}'',\lambda}}{\partial\lambda} dy \nonumber
\\
:\,=\,&\Big\langle I'(Z_{\bar r,\bar{y}'',\lambda}),\ds\frac{\partial
Z_{\bar r,\bar{y}'',\lambda}}{\partial \lambda}\Big\rangle+mK_1 - K_2.
\end{align}

Using \eqref{2.9.5}, we have
\begin{equation}\label{3.2.2.2}
K_1\,=\, O\bigl(\frac{\|\varphi\|_*}{\lambda^{2+\iota}}\bigr)
\,=\, O\bigl(\epsilon^{\frac{3+\iota}{N-2s}}\bigr).
\end{equation}

Note that
\begin{align}
|(1+t)_{+}^{\zeta}-1-\zeta t|\,\leq\,\left\{ \begin{array}{ll}
C\,t^{2},\,& 1<\zeta\leq2,
\\[2mm]
C\,t^{2}+|t|^{\zeta},\,&\zeta>2.
\end{array} \right.
\end{align}
Suppose that $2^{*}_{s}<3.$ Then it follows from Remark \ref{re1} that
\begin{align}\label{1-16-5}
|K_2|\le\, & C  \int_{\R^N}Z_{\bar r,\bar{y}'',\lambda}^{2^{*}_{s}-3+\epsilon}\varphi^{2} \Bigl|\frac{\partial
Z_{\bar r,\bar{y}'',\lambda}}{\partial\lambda}\Bigl| dy
\le\, \frac{C}\lambda \int_{\R^N}\Big(
  \sum_{j\,=\,1}^m U_{x_j,\lambda}\Big)^{2^{*}_{s}-2+\epsilon}|\varphi|^{2} dy\nonumber\\
\le\,& \frac{C\|\varphi\|_{*}^{2}}\lambda \int_{\R^N}
\Big(\sum_{j\,=\,1}^m \frac{\lambda^{ \frac{N-2s}2} }{(1+\lambda|y-x_j|)^{N-2s}}\Big)^{2^{*}_{s}-2+\epsilon}\Bigl(\sum_{i\,=\,1}^m
\frac{\lambda^{\frac{N-2s}2}}{ (1+\lambda|y-x_i|)^{\frac{N-2s}2+\tau}}\Bigr)^{2} dy \nonumber
\\
=\,&\frac{C\, m\|\varphi\|_{*}^{2}}\lambda
=
O\Bigl(m \epsilon^{\frac{3+\iota}{N-2s}}\Bigr).
\end{align}
Similarly, for  $2^{*}_{s}\geq 3$,  we have
\begin{equation}\label{30-17-5}
\begin{split}
& |K_2|
\le \int_{\R^N} \Bigl(Z_{\bar r,\bar{y}'',\lambda}^{2^{*}_{s}-3+\epsilon}\varphi^2 \bigl|
\frac{\partial
Z_{\bar r,\bar{y}'',\lambda}}{\partial\lambda}\bigr|+
|\varphi|^{2^{*}_{s}-1+\epsilon} \bigl|
\frac{\partial
 Z_{\bar r,\bar{y}'',\lambda}}{\partial\lambda}\bigr|
\Bigr)dy\,=\,
O\bigl(m \epsilon^{\frac{3+\iota}{N-2s}}\bigr).
\end{split}
\end{equation}

So, we have proved
\begin{equation}\label{3.2.11}
\begin{array}{ll}
\Big\langle I'(Z_{\bar r,\bar{y}'',\lambda}+\varphi),\ds\frac{\partial
Z_{\bar r,\bar{y}'',\lambda}}{\partial \lambda}\Big\rangle \,=\,\Big\langle
I'(Z_{\bar r,\bar{y}'',\lambda}),\ds\frac{\partial Z_{\bar r,\bar{y}'',\lambda}}{\partial
\lambda}\Big\rangle
+O\big(m \epsilon^{\frac{3+\iota}{N-2s}}\big).
\end{array}
\end{equation}
Using Lemma~\ref{lema.2} in Appendix \ref{s4}, we obtain the result.
\end{proof}

%

Next, we will estimate \eqref{identityofscaling} and \eqref{identityoftranslation}.   Let us point out that
\eqref{identityofscaling} is  the local  Pohozaev identity
generating from scaling, while \eqref{identityoftranslation} is  the local Pohozaev identities
generating from translations.


Noting that
$\tilde{u}_\epsilon$ satisfies the following equation
\begin{equation}
\left\{
\begin{array}{ll}
div(t^{1-2s}\nabla\tilde{u}_\epsilon)=0\quad  \text{in}\, \R_{+}^{N+1},
\\
-\lim_{t\rightarrow 0}t^{1-2s}\partial_t
\tilde{u}_\epsilon(x,t)=K(r,y'')(u_\epsilon)_{+}^{2^{*}_{s}-1+\epsilon}+\ds\sum_{l=1}^{N}c_{l}\ds\sum_{j=1}^{m}U_{x_{j},\lambda}^{2^{*}_{s}-2}Z_{j,l},\quad  \text{in}\, \R^{N}.
\end{array}
\right.
\end{equation}
Then, we can get that
\begin{align}
0&=\int_{\mathcal{B}_{\rho}^{+}(y_0)}div(t^{1-2s}\nabla\tilde{u}_\epsilon)\tilde{u}_\epsilon dydt \nonumber
\\
&=\int_{\partial\mathcal{B}_{\rho}^{+}(y_0)}t^{1-2s}\nabla\tilde{u}_\epsilon\tilde{u}_\epsilon\nu dS
-\int_{\mathcal{B}_{\rho}^{+}(y_0)}t^{1-2s}\nabla\tilde{u}_\epsilon\nabla\tilde{u}_\epsilon dydt \nonumber
\\
&=\int_{\partial'\mathcal{B}_{\rho}^{+}(y_0)}t^{1-2s}\nabla\tilde{u}_\epsilon\tilde{u}_\epsilon\nu dS
+\int_{\partial''\mathcal{B}_{\rho}^{+}(y_0)}t^{1-2s}\nabla\tilde{u}_\epsilon\tilde{u}_\epsilon\nu dS
-\int_{\mathcal{B}_{\rho}^{+}(y_0)}t^{1-2s}|\nabla\tilde{u}_\epsilon|^2d ydt \nonumber
\\
&=-\int_{\partial'\mathcal{B}_{\rho}^{+}(y_0)}t^{1-2s}\partial_{t}\tilde{u}_\epsilon\tilde{u}_\epsilon dS
+\int_{\partial''\mathcal{B}_{\rho}^{+}(y_0)}t^{1-2s}\nabla\tilde{u}_\epsilon\tilde{u}_\epsilon\nu dS
-\int_{\mathcal{B}_{\rho}^{+}(y_0)}t^{1-2s}|\nabla\tilde{u}_\epsilon|^2d ydt \nonumber
\\
&=\int_{B_{\rho}(y_0)}\big[K(r,y'')(u_\epsilon)_{+}^{2^{*}_{s}-1+\epsilon}
+\ds\sum_{l=1}^{N}c_{l}\ds\sum_{j=1}^{m}U_{x_{j},\lambda}^{2^{*}_{s}-2}Z_{j,l}\big]u_\epsilon dy \nonumber
\\
&\quad+\int_{\partial''\mathcal{B}_{\rho}^{+}(y_0)}t^{1-2s}\tilde{u}_\epsilon \frac{\partial \tilde{u}_\epsilon}{\partial\nu} dS
-\int_{\mathcal{B}_{\rho}^{+}(y_0)}t^{1-2s}|\nabla\tilde{u}_\epsilon|^2d ydt.
\end{align}
Therefore, we get
\begin{equation}\label{p23}
\begin{split}
\int_{\mathcal{B}_{\rho}^{+}(y_0)}t^{1-2s}|\nabla\tilde{u}_\epsilon|^2d ydt
=&\int_{B_{\rho}(y_0)} \Big[K(r,y'')(u_\epsilon)_{+}^{2^{*}_{s}+\epsilon}
+\sum_{l=1}^{N}c_{l}\ds\sum_{j=1}^{m}U_{x_{j},\lambda}^{2^{*}_{s}-2}Z_{j,l}u_\epsilon\Big] dy
\\
&
+\int_{\partial''\mathcal{B}_{\rho}^{+}(y_0)}t^{1-2s}\frac{\partial \tilde{u}_\epsilon}{\partial\nu}\tilde{u}_\epsilon dS.
\end{split}
\end{equation}

Since
$$
\int_{\R^{N}}\ds\sum_{l=1}^{N}c_{l}\ds\sum_{j=1}^{m}U_{x_{j},\lambda}^{2^{*}_{s}-2}Z_{j,l}\varphi dy\,=\,0,
$$
then
\begin{align*}
 \int_{\R^{N}}\ds\sum_{l=1}^{N}c_{l}\ds\sum_{j=1}^{m}U_{x_{j},\lambda}^{2^{*}_{s}-2}Z_{j,l} u_{\epsilon}dy
 \,=\,&\int_{\R^{N}}\ds\sum_{l=1}^{N}c_{l}\ds\sum_{j=1}^{m}U_{x_{j},\lambda}^{2^{*}_{s}-2}Z_{j,l}(Z_{\bar{r},\bar{y}'',\lambda}+\varphi)dy
 \\
 \,=\,&\int_{\R^{N}}\ds\sum_{l=1}^{N}c_{l}\ds\sum_{j=1}^{m}U_{x_{j},\lambda}^{2^{*}_{s}-2}Z_{j,l}Z_{\bar{r},\bar{y}'',\lambda}dy.
\end{align*}

Therefore \eqref{identityofscaling} is equivalent to
\begin{align}\label{identity2}
&\frac{1}{2^{*}_{s}+\epsilon}
\int_{B_{\rho}(y_0)}\langle \nabla K,y\rangle(u_\epsilon)_{+}^{2^*_{s}+\epsilon} dy \nonumber
\\
&
=\Big(\frac{N-2s}{2}-\frac{N}{2^{*}_{s}+\epsilon}\Big)\int_{ B_{\rho}(y_0)}K(r,y'') (u_\epsilon)_{+}^{2^*_{s}+\epsilon}dy
+\frac{1}{2^{*}_{s}+\epsilon} \int_{\partial B_{\rho}(y_0)} K(r,y'')(u_\epsilon)_{+}^{2^{*}_{s}+\epsilon}\langle y,\nu\rangle ds \nonumber
\\
&
\quad+\frac{N-2s}{2}\int_{\partial''\mathcal{B}_{\rho}^{+}(y_0)}t^{1-2s}\tilde{u}_\epsilon\frac{\partial\tilde{u}_\epsilon}{\partial\nu} d S
+\int_{\partial''\mathcal{B}_{\rho}^{+}(y_0)} t^{1-2s}\langle Y,\nabla\tilde{u}_\epsilon\rangle\frac{\partial\tilde{u}_\epsilon}{\partial\nu} dS
\\
&\quad-\frac{1}{2}\int_{\partial''\mathcal{B}_{\rho}^{+}(y_0)}t^{1-2s}|\nabla\tilde{u}_\epsilon|^2\langle Y,\nu\rangle dS
+\frac{N-2s}{2}\int_{\R^{N}}\ds\sum_{l=1}^{N}c_{l}\ds\sum_{j=1}^{m}U_{x_{j},\lambda}^{2^{*}_{s}-2}Z_{j,l}Z_{\bar{r},\bar{y}'',\lambda}\,dy \nonumber
\\
& \quad- \frac{N-2s}{2}\int_{B^{c}_{\rho}(y_0)}\ds\sum_{l=1}^{N}c_{l}\ds\sum_{j=1}^{m}U_{x_{j},\lambda}^{2^{*}_{s}-2}Z_{j,l}\varphi dy.\nonumber
\end{align}

\begin{lem}\label{lem3.3}
\eqref{identityofscaling} and \eqref{identityoftranslation} are equivalent to
\begin{equation}\label{p1-3}
\begin{split}
\int_{B_{\rho}(y_0)}\langle \nabla K,y\rangle(u_\epsilon)_{+}^{2^*_{s}+\epsilon} dy
=O\big(m\epsilon^{\frac{1+\iota}{N-2s}}\big)
\end{split}
\end{equation}
and
\begin{equation}\label{p2-3}
\int_{B_{\rho}(y_0)}\frac{\partial K(y)}{\partial y_i}(u_\epsilon)_{+}^{2^{*}_{s}+\epsilon}  dy
=O\big(m\epsilon^{\frac{1+\iota}{N-2s}}\big)\,\,\,i=3,\cdots,N.
\end{equation}

\end{lem}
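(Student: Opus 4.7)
My plan is to reduce each of the two Pohozaev identities to a single volume integral involving $\nabla K$ plus a controlled error. Substituting \eqref{p23} into \eqref{identityofscaling} to eliminate $\int_{\mathcal{B}_{\rho}^{+}(y_0)} t^{1-2s}|\nabla\tilde{u}_\epsilon|^{2}\,dydt$, and exploiting the orthogonality $\int_{\R^{N}}\sum_{l=1}^{N}c_{l}\sum_{j=1}^{m}U_{x_{j},\lambda}^{2^{*}_{s}-2}Z_{j,l}\varphi\,dy=0$ from \eqref{nonlinearequation}, I rewrite \eqref{identityofscaling} in the equivalent form \eqref{identity2}. Thus it suffices to show that every term on the right-hand side of \eqref{identity2}, apart from $\int_{B_{\rho}(y_0)}\langle \nabla K,y\rangle u_\epsilon^{2^{*}_{s}+\epsilon}\,dy$, is of order $O(m\epsilon^{(1+\iota)/(N-2s)})$.

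For the coefficient term, $\tfrac{N-2s}{2}-\tfrac{N}{2^{*}_{s}+\epsilon}=\tfrac{N\epsilon}{2^{*}_{s}(2^{*}_{s}+\epsilon)}=O(\epsilon)$ combined with $\int_{B_{\rho}(y_0)}K(r,y'')u_\epsilon^{2^{*}_{s}+\epsilon}\,dy=O(m)$ (a consequence of the bubble structure and Remark~\ref{re1}) gives $O(m\epsilon)$, which lies within the required bound as long as $N-2s\ge 1+\iota$. For the local boundary integral on $\partial B_{\rho}(y_0)$, I use that every $x_j$ lies in a ball of radius $O(\epsilon^{(1+\iota)/(N-2s)})\ll\rho$ about $y_0$, so that $|y-x_j|\ge \rho/2$ whenever $y\in\partial B_{\rho}(y_0)$; this yields $U_{x_j,\lambda}(y)\le C\lambda^{-(N-2s)/2}\rho^{-(N-2s)}$, and combined with the $*$-norm control on $\varphi$ from Proposition~\ref{prop2.3} gives $u_\epsilon(y)=O(m\lambda^{-(N-2s)/2})$ on $\partial B_{\rho}(y_0)$. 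Raising to the power $2^{*}_{s}+\epsilon$ and using $\lambda\sim\epsilon^{-1/(N-2s)}$ and $m\sim\epsilon^{-(N-2s-2)/(N-2s)^{2}}$ places the resulting integral well within $O(m\epsilon^{(1+\iota)/(N-2s)})$. The term involving $\sum_l c_l\sum_j U_{x_j,\lambda}^{2^{*}_{s}-2}Z_{j,l}Z_{\bar r,\bar{y}'',\lambda}$ together with the corresponding $B_\rho^c(y_0)$-contribution is controlled by the bounds $|c_l|=O(\epsilon^{(1+n_l+\iota)/(N-2s)})$ from \eqref{2.22} and direct pointwise estimates on $U_{x_{j},\lambda}^{2^{*}_{s}-2}Z_{j,l}$ and $\varphi$.

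The proof of \eqref{p2-3} follows the same strategy. Starting from \eqref{identityoftranslation}, a computation analogous to the derivation of \eqref{p23} (testing $\mathrm{div}(t^{1-2s}\nabla\tilde{u}_\epsilon)=0$ against $\partial_{y_i}\tilde{u}_\epsilon$ in place of $\tilde{u}_\epsilon$) eliminates the volume gradient terms and reduces \eqref{identityoftranslation} to a local volume integral plus boundary and $c_l$-contributions, which are estimated exactly as above.

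The principal technical obstacle lies in the boundary integrals on $\partial''\mathcal{B}_{\rho}^{+}(y_0)$ involving $\tilde{u}_\epsilon$ and $\nabla\tilde{u}_\epsilon$. Unlike the local boundary terms on $\partial B_{\rho}(y_0)$, the value of $\tilde{u}_\epsilon$ at $(y,t)$ with $t>0$ is the Poisson average $\int_{\R^N}\mathcal{P}_s(y-\xi,t)u_\epsilon(\xi)\,d\xi$, so the required pointwise and gradient bounds cannot be inferred directly from the decay of $u_\epsilon$ away from the points $x_j$. I plan to split this Poisson integral into a near-field piece (over a ball around $y_0$, where $u_\epsilon$ is concentrated but the kernel is of order $\rho^{-(N+2s)}$) and a far-field piece (where $u_\epsilon$ has fast decay), and then combine the kernel bound $|\mathcal{P}_s(y-\xi,t)|\le Ct^{2s}/(|y-\xi|^{2}+t^{2})^{(N+2s)/2}$ with the analogous derivative estimates on $\partial''\mathcal{B}_{\rho}^{+}(y_0)$, following the approach in \cite{gln-19} indicated above.
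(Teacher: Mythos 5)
Your proposal follows essentially the same route as the paper: rewrite \eqref{identityofscaling} via \eqref{p23} and the orthogonality condition into \eqref{identity2}, absorb the $K(u_\epsilon)_+^{2^*_s+\epsilon}$ volume term into the $O(\epsilon)$ coefficient, bound the $\partial B_\rho(y_0)$ boundary term and the $c_l$-terms via \eqref{2.22}, and control the $\partial''\mathcal{B}_\rho^+(y_0)$ integrals through pointwise/energy estimates on the extension obtained from the Poisson kernel exactly as in the cited Lemmas \ref{lembb5}--\ref{lembb7} from \cite{gln-19}. The only cosmetic discrepancy is your remark about ``eliminating the volume gradient terms'' for \eqref{identityoftranslation} (that identity contains no such volume term, so only the boundary estimates are needed there), which does not affect the correctness of the argument.
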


\begin{proof}
Here we only prove \eqref{p1-3} since the proof of \eqref{p2-3} is similar.

First, we have
\begin{equation}\label{l3-0}
\begin{split}
\Big(\frac{N-2s}{2}-\frac{N}{2^{*}_{s}+\epsilon}\Big)\int_{ B_{\rho}(y_0)}K(r,y'') (u_\epsilon)_{+}^{2^*_{s}+\epsilon} dy
=o(\epsilon)=o(m\epsilon^{1+\frac{N-2s-2}{(N-2s)^{2}}})=o(m\epsilon^{\frac{1+\iota}{N-2s}}).
\end{split}
\end{equation}

Noting that $K(r,y'')$ is bounded, we have
\begin{align}\label{l3-00}
&\frac{1}{2^{*}_{s}+\epsilon} \int_{\partial B_{\rho}(y_0)} K(r,y'')(u_\epsilon)_{+}^{2^{*}_{s}+\epsilon}\langle y,\nu\rangle ds\\
& \leq C\int_{\partial B_{\rho}(y_0)}|\varphi_\epsilon|^{2^{*}_{s}}ds
+C\int_{\partial B_{\rho}(y_0)}Z_{\bar{r},\bar{y}'',\lambda}^{2^{*}_{s}} ds\nonumber\\
&\leq C\|\varphi\|^{2^{*}_{s}}\int_{ \partial B_{\rho}(y_0)}
\Big(\sum_{j=1}^{m}\frac{\lambda^{\frac{N-2s}{2}}}{(1+\lambda|y-x_{j}|)^{\frac{N-2s}{2}+\tau}}\Big)^{2^{*}_{s}} ds \nonumber
\\
&\quad
+C\int_{ \partial B_{\rho}(y_0)}
\Big(\sum_{j=1}^{m}\frac{\lambda^{\frac{N-2s}{2}}}{(1+\lambda|y-x_{j}|)^{N-2s}}\Big)^{2^{*}_{s}}ds \nonumber
\\
&\leq C\frac{m^{2^{*}_{s}}\|\varphi\|^{2^{*}_{s}}}{\lambda^{2^{*}_{s}\tau}}
+C\frac{m^{2^{*}_{s}}}{\lambda^{N}}
 \leq Cm\epsilon^{\frac{1+\iota}{N-2s}}.
\end{align}
Note that
\begin{equation}\label{l3-1}
\begin{split}
&\int_{\partial''\mathcal{B}_{\rho}^{+}(y_0)} t^{1-2s}\langle \nabla\tilde{u}_\epsilon, Y\rangle\frac{\partial\tilde{u}_\epsilon}{\partial\nu} dS
\\
&=\int_{\partial''\mathcal{B}_{\rho}^{+}(y_0)}t^{1-2s}\langle \nabla \tilde{Z}_{\bar{r},\bar{y}'',\lambda},Y\rangle\frac{\partial\tilde{Z}_{\bar{r},\bar{y}'',\lambda}}{\partial\nu} dS
+\int_{\partial''\mathcal{B}_{\rho}^{+}(y_0)}t^{1-2s}\langle \nabla\tilde{\varphi},Y\rangle\frac{\partial \tilde{\varphi}}{\partial\nu}dS
\\
&\quad+\int_{\partial''\mathcal{B}_{\rho}^{+}(y_0)}t^{1-2s}\langle \nabla\tilde{Z}_{\bar{r},\bar{y}'',\lambda},Y\rangle\frac{\partial \tilde{\varphi}}{\partial\nu}dS
+\int_{\partial''\mathcal{B}_{\rho}^{+}(y_0)}t^{1-2s}\langle \nabla \tilde{\varphi},Y\rangle\frac{\partial \tilde{Z}_{\bar{r},\bar{y}'',\lambda}}{\partial\nu} dS.
\end{split}
\end{equation}
Next, we will estimates the terms in \eqref{l3-1} one by one.

By Lemma \ref{lembb5}, we have
\begin{align}\label{l3-2}
&\Big|\int_{\partial''\mathcal{B}_{\rho}^{+}(y_0)}t^{1-2s}\langle \nabla\tilde{Z}_{\bar{r},\bar{y}'',\lambda},Y\rangle\frac{\partial \tilde{Z}_{\bar{r},\bar{y}'',\lambda}}{\partial\nu} dS\Big| \nonumber\\
&\leq C\int_{\partial''\mathcal{B}_{\rho}^{+}(y_0)}t^{1-2s}|\nabla\tilde{Z}_{\bar{r},\bar{y}'',\lambda}|^{2}dS \nonumber\\
& \leq\frac{C}{\lambda^{N-2s}}\int_{\partial''\mathcal{B}_{\rho}^{+}(y_0)}t^{1-2s}
\Big(\sum_{j=1}^{m}\frac{1}{(1+|y-x_{j}|)^{N-2s+1}}\Big)^{2}dS\\
&\leq \frac{Cm^{2}}{\lambda^{N-2s}}\int_{\partial''\mathcal{B}_{\rho}^{+}(y_0)}t^{1-2s}
\frac{1}{(1+|y-x_{j}|)^{2N-4s+2}} dS \nonumber\\
&\leq \frac{Cm^{2}}{\lambda^{N-2s}}\leq Cm\epsilon^{\frac{1+\iota}{N-2s}}.\nonumber
\end{align}

By Lemma \ref{lembb6}, we have
\begin{equation}\label{l3-3}
\begin{split}
\Big|\int_{\partial''\mathcal{B}_{\rho}^{+}(y_0)}t^{1-2s}\langle \nabla\tilde{\varphi},Y\rangle\frac{\partial \tilde{\varphi}}{\partial\nu}dS\Big|
\leq\, &C\int_{\partial''\mathcal{B}_{\rho}^{+}(y_0)}t^{1-2s}|\nabla \tilde{\varphi}|^{2} dS
\\
\leq\, &Cm\frac{\|\varphi\|^{2}_{*}}{\lambda^{\tau}}\leq Cm\epsilon^{\frac{1+\iota}{N-2s}}.
\end{split}
\end{equation}

By \eqref{l3-2} and \eqref{l3-3}, we have
\begin{equation}\label{l3-4}
\begin{split}
&\Big|\int_{\partial''\mathcal{B}_{\rho}^{+}(y_0)}t^{1-2s}\langle \nabla\tilde{Z}_{\bar{r},\bar{y}'',\lambda},Y\rangle\frac{\partial \tilde{\varphi}}{\partial\nu}dS\Big|
\\
&\leq C\int_{\partial''\mathcal{B}_{\rho}^{+}(y_0)}t^{1-2s}|\nabla\tilde{Z}_{\bar{r},\bar{y}'',\lambda}||\nabla \tilde{\varphi}|dS\\
&\leq C\int_{\partial''\mathcal{B}_{\rho}^{+}(y_0)}t^{1-2s}|\nabla\tilde{Z}_{\bar{r},\bar{y}'',\lambda}|^{2}dS
+C\int_{\partial''\mathcal{B}_{\rho}^{+}(y_0)}t^{1-2s}|\nabla \tilde{\varphi}|^{2}dS\\
&\leq Cm\epsilon^{\frac{1+\iota}{N-2s}}.
\end{split}
\end{equation}
Similar to \eqref{l3-4}, we have
\begin{equation}\label{l3-5}
\begin{split}
&\Big|\int_{\partial''\mathcal{B}_{\rho}^{+}(y_0)}t^{1-2s}\langle \nabla \tilde{\varphi},Y\rangle\frac{\partial \tilde{Z}_{\bar{r},\bar{y}'',\lambda}}{\partial\nu} dS\Big|
\\
&\leq C\int_{\partial''\mathcal{B}_{\rho}^{+}(y_0)}t^{1-2s}|\nabla \tilde{\varphi}||\nabla\tilde{Z}_{\bar{r},\bar{y}'',\lambda}| dS
\leq Cm\epsilon^{\frac{1+\iota}{N-2s}}.
\end{split}
\end{equation}
From \eqref{l3-1} to \eqref{l3-5}, we have
\begin{equation}\label{l3-6}
\begin{split}
\Big|\int_{\partial''\mathcal{B}_{\rho}^{+}(y_0)} t^{1-2s}\langle \nabla\tilde{u}_\epsilon,Y
\rangle\frac{\partial\tilde{u}_\epsilon}{\partial\nu}dS\Big|
\leq Cm\epsilon^{\frac{1+\iota}{N-2s}} .
\end{split}
\end{equation}
Just by the same argument as that of \eqref{l3-6}, we can prove
\begin{equation}\label{l3-7}
\begin{split}
\Big|\int_{\partial''\mathcal{B}_{\rho}^{+}(y_0)} t^{1-2s}|\nabla\tilde{u}_\epsilon|^{2}\langle Y,\nu\rangle dS\Big|
\leq Cm\epsilon^{\frac{1+\iota}{N-2s}}.
\end{split}
\end{equation}

Similar to \eqref{l3-2}, by Lemma \ref{lembb5} we have
\begin{equation}\label{l3-2-1}
\begin{split}
&\Big|\int_{\partial''\mathcal{B}_{\rho}^{+}(y_0)}t^{1-2s} \frac{\partial\tilde{Z}_{\bar{r},\bar{y}'',\lambda}}{\partial\nu} \tilde{Z}_{\bar{r},\bar{y}'',\lambda}dS\Big|\\
&\leq C\int_{\partial''\mathcal{B}_{\rho}^{+}(y_0)}t^{1-2s}|\nabla\tilde{Z}_{\bar{r},\bar{y}'',\lambda}|\,|\tilde{Z}_{\bar{r},\bar{y}'',\lambda}|\,dS\\
& \leq\frac{C}{\lambda^{N-2s}}\int_{\partial''\mathcal{B}_{\rho}^{+}(y_0)}t^{1-2s}
\sum_{j=1}^{m}\frac{1}{(1+|y-x_{j}|)^{N-2s+1}}\sum_{j=1}^{m}\frac{1}{(1+|y-x_{j}|)^{N-2s}}dS\\
&\leq \frac{Cm^{2}}{\lambda^{N-2s}}\int_{\partial''\mathcal{B}_{\rho}^{+}(y_0)}t^{1-2s}
\frac{1}{(1+|y-x_{j}|)^{2N-4s+1}}dS\\
&\leq \frac{Cm^{2}}{\lambda^{N-2s}}\leq Cm\epsilon^{\frac{1+\iota}{N-2s}}.
\end{split}
\end{equation}
By Lemma \ref{lembb7}, we have
\begin{equation}\label{l3-8}
\begin{split}
\Big|\int_{\partial''\mathcal{B}_{\rho}^{+}(y_0)} t^{1-2s}|\tilde{\varphi}|^{2}dS\Big|
&\leq C\frac{\|\varphi\|^{2}_{*}}{\lambda^{2\tau}}\int_{\partial''\mathcal{B}_{\rho}^{+}(y_0)} t^{1-2s}
\Big(\sum_{j=1}^{m}\frac{1}{(1+|y-x_{j}|)^{\frac{N-2s}{2}+\tau}}\Big)^{2}dS\\
&\leq Cm\frac{\|\varphi\|^{2}_{*}}{\lambda^{2\tau}}\int_{\partial''\mathcal{B}_{\rho}^{+}(y_0)} t^{1-2s}
\frac{1}{(1+|y-x_{j}|)^{N-2s+2\tau}}dS\\
&\leq Cm\epsilon^{\frac{1+\iota}{N-2s}}.
\end{split}
\end{equation}
It follows from \eqref{l3-3} and \eqref{l3-8} that
\begin{equation}\label{l3-9}
\begin{split}
\Big|\int_{\partial''\mathcal{B}_{\rho}^{+}(y_0)} t^{1-2s}\frac{\partial \tilde{\varphi}}{\partial\nu}\tilde{\varphi} dS\Big|
&\leq C\int_{\partial''\mathcal{B}_{\rho}^{+}(y_0)} t^{1-2s}|\nabla \tilde{\varphi}||\tilde{\varphi}|dS\\
&\leq C\int_{\partial''\mathcal{B}_{\rho}^{+}(y_0)} t^{1-2s}|\nabla \tilde{\varphi}|^{2} dS
+C\int_{\partial''\mathcal{B}_{\rho}^{+}(y_0)} t^{1-2s}| \tilde{\varphi}|^{2}dS\\
&\leq Cm\epsilon^{\frac{1+\iota}{N-2s}}.
\end{split}
\end{equation}
Similarly, by \eqref{l3-2} and \eqref{l3-8}, we have
\begin{equation}\label{l3-10}
\begin{split}
\Big|\int_{\partial''\mathcal{B}_{\rho}^{+}(y_0)} t^{1-2s}\frac{\partial \tilde{Z}_{\bar{r},\bar{y}'',\lambda}}{\partial\nu}\tilde{\varphi}dS\Big|
\leq Cm\epsilon^{\frac{1+\iota}{N-2s}}.
\end{split}
\end{equation}
Similar to \eqref{l3-4}, by \eqref{l3-3} and Lemma \ref{lembb5}  we can also prove
\begin{equation}\label{l3-11}
\begin{split}
\Big|\int_{\partial''\mathcal{B}_{\rho}^{+}(y_0)} t^{1-2s}\frac{\partial \tilde{\varphi}}{\partial\nu}\tilde{Z}_{\bar{r},\bar{y}'',\lambda}dS\Big|
\leq Cm\epsilon^{\frac{1+\iota}{N-2s}}.
\end{split}
\end{equation}
Hence, from \eqref{l3-2-1} to \eqref{l3-11} we have
\begin{equation}\label{l3-12}
\begin{split}
&\Big|\int_{\partial''\mathcal{B}_{\rho}^{+}(y_0)}t^{1-2s}\tilde{u}_\epsilon\frac{\partial\tilde{u}_\epsilon}{\partial\nu}dS\Big|\\
&=\Big|\int_{\partial''\mathcal{B}_{\rho}^{+}(y_0)}t^{1-2s}
\frac{\partial\tilde{Z}_{\bar{r},\bar{y}'',\lambda}}{\partial\nu}\tilde{Z}_{\bar{r},\bar{y}'',\lambda} dS
+\int_{\partial''\mathcal{B}_{\rho}^{+}(y_0)}t^{1-2s}
\frac{\partial\tilde{\varphi}}{\partial\nu}\tilde{\varphi} dS\\
&
\quad\quad+\int_{\partial''\mathcal{B}_{\rho}^{+}(y_0)}t^{1-2s}
\frac{\partial\tilde{Z}_{\bar{r},\bar{y}'',\lambda}}{\partial\nu}\tilde{\varphi} dS
+\int_{\partial''\mathcal{B}_{\rho}^{+}(y_0)}t^{1-2s}
\frac{\partial\tilde{\varphi}}{\partial\nu}\tilde{Z}_{\bar{r},\bar{y}'',\lambda} dS\Big|\\
&\leq Cm\epsilon^{\frac{1+\iota}{N-2s}}.
\end{split}
\end{equation}
By \eqref{2.22}, we know
$$
 |c_{l}|\leq
C\epsilon^{\frac{1+n_l+\iota}{N-2s}}.
$$
Note that
\begin{equation}\label{l3-13}
\begin{split}
\int_{ B_{\rho}(x_{0})}\ds\sum_{j=1}^{m}U_{x_{j},\lambda}^{2^{*}_{s}-2}Z_{j,l}Z_{\bar{r},\bar{y}'',\lambda}
&=\ds\sum_{j=1}^{m}\int_{ B_{\rho}(x_{0})}U_{x_{j},\lambda}^{2^{*}_{s}-1}Z_{j,l}
+\ds\sum_{j=1}^{m}\int_{ B_{\rho}(x_{0})}\sum_{i\neq j}U_{x_{j},\lambda}^{2^{*}_{s}-2}Z_{j,l}U_{x_{i},\lambda}dy\\
&=O\Big(m \lambda^{n_l}\Big).
\end{split}
\end{equation}
Therefore, we have
\begin{equation}\label{l3-14}
\begin{split}
\sum_{l=1}^{N}c_{l}\int_{ B_{\rho}(x_{0})}\ds\sum_{j=1}^{m}U_{x_{j},\lambda}^{2^{*}_{s}-2}Z_{j,l}Z_{\bar{r},\bar{y}'',\lambda}
\leq Cm\epsilon^{\frac{1+\iota}{N-2s}}.
\end{split}
\end{equation}
Similarly, we can prove that
\begin{equation} \label{13-15}
\sum_{l=1}^{N}c_{l}\int_{ B_{\rho}^{c}(x_{0})}\ds\sum_{j=1}^{m}U_{x_{j},\lambda}^{2^{*}_{s}-2}Z_{j,l} \varphi_{\epsilon}
\leq Cm\epsilon^{\frac{1+\iota}{N-2s}}.
\end{equation}

Combining \eqref{identity2}, \eqref{l3-0}, \eqref{l3-00}, \eqref{l3-6}, \eqref{l3-7}, \eqref{l3-12},
\eqref{l3-14} and \eqref{13-15}, we can prove that \eqref{p1-3} holds.
\end{proof}

Next, we prove
\begin{lem}\label{lem3.4}
For  any $C^1$ bounded function $g(r, y''), $ it holds
 \begin{equation}\label{20-7-2}
\int_{B_\rho(y_0)}  g(r, y'') |u_l|^{2^{*}_{s}+\epsilon}\,dy
 =m\Bigl(  g (\bar r,\bar{y}'') \int_{\R^{N}} U_{0, 1}^{2^{*}_{s}+\epsilon} \,dy+ o\bigl(\epsilon^{\frac{1-\iota}{N-2s}}\bigr)\Bigr),
\end{equation}
where $o(1)$ denotes a quantity that goes to zero when $\epsilon$ goes to zero.
\end{lem}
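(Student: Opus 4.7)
The plan is to substitute $u_\epsilon = Z_{\bar r,\bar y'',\lambda} + \varphi_{\bar r,\bar y'',\lambda}$ into the integral and peel off the $\varphi$-contribution first. Using the elementary inequality
\[
\bigl||a+b|^{2^*_s+\epsilon} - |a|^{2^*_s+\epsilon}\bigr| \leq C\bigl(|a|^{2^*_s-1+\epsilon}|b| + |b|^{2^*_s+\epsilon}\bigr),
\]
together with the bound $\|\varphi\|_* \leq C\epsilon^{(1+\iota)/(N-2s)}$ from Proposition~\ref{prop2.3}, the uniform bound $\lambda^{(N-2s)\epsilon/2} \leq C$ from Remark~\ref{re1}, and the same discrete H\"older trick employed in Lemmas~\ref{lem2.4}--\ref{lem2.5}, I would show that the $\varphi$-contribution is absorbed into an $o(m\epsilon^{(1-\iota)/(N-2s)})$ remainder. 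This reduces matters to estimating
\[
\int_{B_\rho(y_0)} g(r,y'')\, Z_{\bar r,\bar y'',\lambda}^{2^*_s+\epsilon}\, dy.
\]

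Second, since $g$ depends only on $(r,y'')$ and the configuration has the weak symmetry built into $H_s$, I decompose $B_\rho(y_0)$ into the $m$ sectors $\Omega_j$ used in the proof of Lemma~\ref{lem2.5}; each contributes identically. On $\Omega_1$ the bubble $U_{x_1,\lambda}$ dominates, so I split
\[
Z^{2^*_s+\epsilon} = U_{x_1,\lambda}^{2^*_s+\epsilon} + \bigl(Z^{2^*_s+\epsilon} - U_{x_1,\lambda}^{2^*_s+\epsilon}\bigr),
\]
and control the interaction term by $(a+b)^p - a^p \leq C(a^{p-1}b + b^p)$ together with $|y - x_j| \geq |y - x_1|$ on $\Omega_1$ and Lemma~\ref{lemb1}; the summation over $j \geq 2$ yields the factor $(\lambda|x_1-x_j|)^{-\alpha}$ which, exactly as in the estimates of $J_2$ in Lemma~\ref{lem2.5}, contributes $o(\epsilon^{(1-\iota)/(N-2s)})$ per sector. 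For the leading term I rescale $z = \lambda(y - x_1)$ to obtain
\[
\int_{\Omega_1\cap B_\rho(y_0)} g(r,y'') U_{x_1,\lambda}^{2^*_s+\epsilon}\,dy = \lambda^{\frac{N-2s}{2}\epsilon} \int_{\lambda(\Omega_1\cap B_\rho - x_1)} g(x_1+z/\lambda)\, U_{0,1}^{2^*_s+\epsilon}(z)\, dz.
\]
A direct check shows $\lambda(\Omega_1 - x_1)$ exhausts $\R^N$ as $\epsilon\to 0$, since the ratio $\lambda/m$ behaves like $\epsilon^{-2/(N-2s)^2}\to\infty$ under our choices. Because $g$ is $C^1$ bounded and $g(x_1) = g(\bar r,\bar y'')$ (the symmetry of $x_1$ in the $y'$-variables), a first-order Taylor expansion produces an $O(1/\lambda) = O(\epsilon^{1/(N-2s)})$ error per sector. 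Combining this with Remark~\ref{re1}, which replaces $\lambda^{(N-2s)\epsilon/2}$ by a constant up to $o(1)$, and applying dominated convergence, yields the claimed leading expression after multiplication by $m$.

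The chief obstacle is the interaction estimate in the presence of the perturbed exponent $2^*_s + \epsilon$: one has to carry the factor $\lambda^{(N-2s)\epsilon/2}$ through every step of the discrete H\"older machinery, and the bound $\lambda^{(N-2s)\epsilon/2}\leq C$ of Remark~\ref{re1} is what makes the argument pass. A secondary delicate point is verifying that the rescaled sector $\lambda(\Omega_1 - x_1)$ actually covers $\R^N$ in the limit, which depends on the precise quantitative relation between $m = [\epsilon^{-(N-2s-2)/(N-2s)^2}]$ and $\lambda \in [L_0\epsilon^{-1/(N-2s)}, L_1\epsilon^{-1/(N-2s)}]$; once this quantitative comparison is secured, the rest of the proof is a routine expansion.
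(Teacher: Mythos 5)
Your proposal is correct and follows essentially the same route as the paper: strip off the $\varphi$-contribution via the elementary power inequality, Proposition~\ref{prop2.3} and the discrete H\"older machinery, discard the bubble--bubble interactions using Lemma~\ref{lemb1} (giving $O(m/\lambda^{N-2s})$), and evaluate the diagonal term by rescaling and a Taylor/constant-plus-remainder expansion of $g$ around $(\bar r,\bar y'')$. Your sector-by-sector bookkeeping on the $\Omega_j$ and the explicit check that $\lambda/m\to\infty$ are just a reorganization of the paper's summation over $j$ and over pairs $i\ne j$, not a different argument.
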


\begin{proof}
Since $u_\epsilon  = Z_{\bar r,\bar{y}'',\mu } +\varphi,$ we have
\begin{equation}\label{21-7-2}
\begin{split}
 &\int_{B_\rho(y_0)}g(r, y'') |u_\epsilon|^{2^{*}_{s}+\epsilon}\,dy
 =\int_{B_\rho(y_0)} g(r, y'') |Z_{\bar r,\bar{y}'',\lambda }+\varphi|^{2^{*}_{s}+\epsilon}\,dy
 \\
=&\int_{B_\rho(y_0)} g(r, y'') |Z_{\bar r,\bar{y}'',\lambda }|^{2^{*}_{s}+\epsilon}\,dy+ \int_{B_\rho(y_0)}  g(r, y'') |\varphi|^{2^{*}_{s}+\epsilon}\,dy
 \\
 &+O\Big(\int_{B_\rho(y_0)}   |Z_{\bar r,\bar{y}'',\lambda }||\varphi|^{2^{*}_{s}+\epsilon-1}\,dy
 +\int_{B_\rho(y_0)}  |Z_{\bar r,\bar{y}'',\epsilon }|^{2^{*}_{s}+\epsilon-1}|\varphi|\,dy\Big).
\end{split}
\end{equation}

By the discrete H\"{o}lder inequality and Remark \ref{re1}, we can check that
\begin{align}\label{25-7-3}
&\int_{B_\rho(y_0)}
 |Z_{\bar r,\bar{y}'',\lambda }||\varphi|^{2^{*}_{s}+\epsilon-1} \,dy\nonumber
\\
&\leq C\|\varphi\|^{2^{*}_{s}+\epsilon-1}_{*}\int_{\R^N} \sum_{j=1}^{m}\frac{\lambda^{\frac{N-2s}{2}}}{(1+\lambda|y-x_{j}|)^{N-2s}}
\Big(\sum_{i=1}^{m}\frac{\lambda^{\frac{N-2s}{2}}}{(1+\lambda|y-x_{i}|)^{\frac{N-2s}{2}+\tau}}\Big)^{\frac{N+2s}{N-2s}+\epsilon} \nonumber
\\
&\leq C\|\varphi\|^{2^{*}_{s}+\epsilon-1}_{*}\int_{\R^N} \sum_{j=1}^{m}\frac{\lambda^{\frac{N-2s}{2}}}{(1+\lambda|y-x_{j}|)^{N-2s}}
\sum_{i=1}^{m}\frac{\lambda^{\frac{N+2s}{2}+\frac{N-2s}{2}\epsilon}}{(1+\lambda|y-x_{i}|)^{\frac{N+2s}{2}+\tau}} \nonumber
\\
&\leq C\|\varphi\|^{2^{*}_{s}+\epsilon-1}_{*}
\int_{\R^N} \Big[\sum_{j=1}^{m}\frac{ \lambda^{N}}{(1+\lambda|y-x_{j}|)^{\frac{3N-2s}{2}+\tau}}
+\sum_{j\neq i}\frac{ \lambda^{N}}{(1+\lambda|y-x_{j}|)^{N-2s}}\frac{1}{(1+\lambda|y-x_{i}|)^{\frac{N+2s}{2}+\tau}}\Big] \nonumber
\\
&\leq  Cm\epsilon^{\frac{1+\iota}{N-2s}}+C\epsilon^{\frac{1+\iota}{N-2s}}\int_{\R^N}\sum_{j\neq i}\frac{1}{(\lambda|x_{i}-x_{j}|)^{\tau}}
\Big(\frac{\lambda^{N}}{(1+\lambda|y-x_{j}|)^{\frac{3N-2s}{2}}}
+\frac{ \lambda^{N}}{(1+\lambda|y-x_{i}|)^{\frac{3N-2s}{2}}}\Big) \nonumber
\\
&\leq C m\epsilon^{\frac{1+\iota}{N-2s}}
\end{align}
and
\begin{align}\label{25-7-2}
&\int_{B_\rho(y_0)}
 |Z_{\bar r,\bar{y}'',\lambda }|^{2^{*}_{s}+\epsilon-1}|\varphi|\,dy\nonumber
\\
&\leq C\|\varphi\|_{*}\int_{\mathbb R^N} \sum_{j=1}^{m}\frac{\lambda^{\frac{N+2s}{2}+\frac{N-2s}{2}\epsilon}}{(1+\lambda|y-x_{j}|)^{N+2s}}
\sum_{i=1}^{m}\frac{\lambda^{\frac{N-2s}{2}}}{(1+\lambda|y-x_{i}|)^{\frac{N-2s}{2}+\tau}}\nonumber\\
&\leq C\|\varphi\|_{*}
\int_{\mathbb R^N} \Big[\sum_{j=1}^{m}\frac{\lambda^{N}}{(1+\lambda|y-x_{j}|)^{\frac{3N+2s}{2}+\tau}}
+\sum_{j\neq i}\frac{\lambda^{N}}{(1+\lambda|y-x_{j}|)^{N+2s}}\frac{1}{(1+\lambda|y-x_{i}|)^{\frac{N-2s}{2}+\tau}}\Big]\nonumber\\
&\leq Cm\epsilon^{\frac{1+\iota}{N-2s}}
+C\epsilon^{\frac{1+\iota}{N-2s}}\int_{\mathbb R^N}\sum_{j\neq i}\frac{1}{(\lambda|x_{j}-x_{i}|)^{\tau}}
\Big(\frac{ \lambda^{N}}{(1+\lambda|y-x_{i}|)^{\frac{3N+2s}{2}}}
+\frac{ \lambda^{N}}{(1+\lambda|y-x_{i}|)^{\frac{3N+2s}{2}}}\Big)\nonumber
\\
&\leq Cm\epsilon^{\frac{1+\iota}{N-2s}}.
\end{align}

So from \eqref{21-7-2} and \eqref{25-7-2}, we obtain the following estimate
\begin{equation}\label{28-7-2}
 \int_{B_\rho(y_0)}  g(r, y'')|u_\epsilon|^{2^{*}_{s}+\epsilon}\,dy\,=\, \int_{B_\rho(y_0)}  g(r, y'')  Z_{\bar r,\bar{y}'',\lambda }^{2^{*}_{s}+\epsilon}\,dy  + mo\bigl(\epsilon^{\frac{1-\iota}{N-2s}}\bigr).
\end{equation}
Since
\begin{align}\label{29-7-2}
  &\int_{B_\rho(y_0)}  g(r, y'')  U_{x_{j},\lambda}^{2^{*}_{s}+\epsilon}\,dy
  \,=\,\int_{B_\rho(y_0)}  g(\bar{r}, \bar{y}'')  U_{x_{j},\lambda}^{2^{*}_{s}+\epsilon}\,dy
  +\int_{B_\rho(y_0)} [g(r, y'')- g(\bar{r}, \bar{y}'')]  U_{x_{j},\lambda}^{2^{*}_{s}+\epsilon}\,dy \nonumber
  \\
  &=\int_{\mathbb R^N}  g(\bar{r}, \bar{y}'')  U_{x_{j},\lambda}^{2^{*}_{s}+\epsilon}
  +\int_{B^{c}_\rho(y_0)}  g(\bar{r}, \bar{y}'')  U_{x_{j},\lambda}^{2^{*}_{s}+\epsilon}+\int_{B_\rho(y_0)} [g(r, y'')- g(\bar{r}, \bar{y}'')]  U_{x_{j},\lambda}^{2^{*}_{s}+\epsilon}\,dy\nonumber
  \\
  &=g (\bar r,\bar{y}'') \int_{\mathbb R^N} U_{0, 1}^{2^{*}_{s}+\epsilon}\,dy  + o\bigl(\epsilon^{\frac{1-\iota}{N-2s}}\bigr),
\end{align}
and
\begin{equation}\label{29-7-3}
\begin{split}
&\sum_{i\ne j}  \int_{B_{\rho}(y_0)}  g(r, y'')  U_{x_{i},\lambda} U^{2^{*}_{s}+\epsilon-1}_{x_{j},\lambda}\,dy
 \leq C\sum_{i\ne j}  \int_{\mathbb R^N} U_{x_{i},\lambda} U^{2^{*}_{s}+\epsilon-1}_{x_{j},\lambda}\,dy
\\
&\leq C\sum_{i\ne j}  \int_{\mathbb R^N}\frac{\lambda^{N+\frac{N-2s}{2}\epsilon}}{(1+\lambda|y-x_{i}|)^{N-2s}}
\frac{1}{(1+\lambda|y-x_{j}|)^{N+2s}}\,dy
\\
&\leq \sum_{i\neq j}\frac{1}{(\lambda|x_{i}-x_{j}|)^{N-2s}}
\int_{\mathbb R^N} \Big[\frac{\lambda^{N}}{(1+\lambda|y-x_{i}|)^{N+2s}}
+\frac{\lambda^{N}}{(1+\lambda|y-x_{j}|)^{N+2s}}\Big]\,dy
\\
&\leq C\sum_{i\neq j}\Big(\frac{m}{\lambda}\Big)^{N-2s}
=O \Big( \frac{m}{\lambda^{N-2s}} \Big)  =o\Big(m\epsilon^{\frac{1-\iota}{N-2s}}\Big).
\end{split}
\end{equation}

As a result,
\begin{equation}\label{30-7-2}
 \int_{B_\rho(y_0)}g(r, y'') |u_\epsilon|^{2^{*}_{s}+\epsilon}\,dy
 \,=\,m\Bigl(  g (\bar r,\bar{y}'') \int_{\mathbb R^N} U_{0, 1}^{2^{*}_{s}+\epsilon} \,dy + o\bigl(\epsilon^{\frac{1-\iota}{N-2s}}\bigr)\Bigr).
\end{equation}

\end{proof}

Now it follows form Lemma~\ref{lem3.4}, \eqref{p1-3} and \eqref{p2-3} that
\begin{equation}\label{31-7-2}
  m\Bigl( \frac{\partial K(\bar r,\bar y'')}{\partial \bar r} \int_{\mathbb R^N} U_{0, 1}^{2^{*}_{s}+\epsilon} \,dy + o\bigl(\epsilon^{\frac{1-\iota}{N-2s}}\bigr)\Bigr)\,=\,o\bigl( m\epsilon^{\frac{1}{N-2s}}\bigr),
\end{equation}
and
\begin{equation}\label{32-7-2}
  m\Bigl( \frac{\partial (K(\bar r, \bar y''))}{\partial \bar y_{i}} \int_{\mathbb R^N} U_{0, 1}^{2^{*}_{s}+\epsilon} \,dy + o\bigl(\epsilon^{\frac{1-\iota}{N-2s}}\bigr)\Bigr)
  \,=\,o\bigl(m\epsilon^{\frac{1}{N-2s}}\bigr),\,\,\,\,i=3,\cdots,N.
\end{equation}
Therefore, the equations to determine $( \bar r, \bar y'')$ are
\begin{equation}\label{67-18-1}
\frac{\partial K(\bar r,\bar y'')}{\partial \bar r}\,=\,o(\epsilon^{\frac{1-\iota}{N-2s}}),
\end{equation}
and
\begin{equation}\label{70-18-1}
  \frac{\partial (K(\bar r, \bar y''))}{\partial \bar y_{i}}
=\,o(\epsilon^{\frac{1-\iota}{N-2s}}), \quad i=3, \cdots, N.
\end{equation}

\begin{proof}[Proof of Theorem~\ref{thm1.2}]

We have proved that \eqref{identityofscaling}, \eqref{identityoftranslation} and \eqref{integralofum} are equivalent to
\begin{equation}\label{71-18-1}
   \frac{\partial (K(\bar r, \bar y'')}{\partial \bar r}
=o(\epsilon^{\frac{1-\iota}{N-2s}}),
\end{equation}
\begin{equation}\label{72-18-1}
\frac{\partial K(\bar r,\bar y'')}{\partial \bar y_i}=o(\epsilon^{\frac{1-\iota}{N-2s}}), \quad i=3, \cdots, N
\end{equation}
and
\begin{equation}\label{73-18-1}
-\frac{B_{1}}{\lambda^{3}} +\frac{B_3 m^{N-2s}}{\lambda^{N-2s+1}}
=O\big(\epsilon^{\frac{3+\iota}{N-2s}}\big).
\end{equation}

Let  $\lambda = t m^{\frac{N-2s}{N-2s-2}}$, then $t\in [ L_0, L_1]$ since   $\lambda\in[L_{0}m^{\frac{N-2s}{N-2s-2}},L_{1}m^{\frac{N-2s}{N-2s-2}}].$
Then, from \eqref{73-18-1}, we get
\begin{equation}\label{74-18-1}
-\frac{B_{1}}{t^{3}} +\frac{B_3 }{t^{N-2s+1}}
=o(1),\quad t\in [L_0, L_1].
\end{equation}

Let
$$
F(t, \bar r, \bar y'') =
\bigl(  \nabla_{\bar r,\bar y''} (K(\bar r, \bar {y}'')),
-\frac{B_{1}}{t^{3}}+\frac{B_3 }{t^{N-2s+1}}
\bigr).
$$
 Then
\[
\deg\bigl( F(t, \bar r, \bar y''), [ L_0, L_1]\times B_\theta ((r_0, y_0'')) )
=-\deg\bigl( \nabla_{\bar r, \bar y''} (K(\bar r,\bar y'')),  B_\theta ((r_0, y_0'')) )\ne 0.
\]
So, \eqref{71-18-1}, \eqref{72-18-1} and \eqref{74-18-1} have a solution  $t_m\in [ L_0, L_1]$, $(\bar r_m, \bar y''_m)\in B_\theta ((r_0, y_0''))$.
\end{proof}

\appendix

\section{Pohozaev Identities }\label{sa}
For the readers' convenient, we give the detailed proof of some local Pohozaev identities.
Note that if $u_{\epsilon}$ satisfies \eqref{nonlinearequation}, which is equivalent to
$\tilde{u}_{\epsilon}$ satisfies
\begin{equation}\label{l-e}
\left\{
\begin{array}{ll}
div(t^{1-2s}\nabla\tilde{u}_\epsilon)=0\quad  \text{in}\, \R_{+}^{N+1},
\\
-\lim_{t\rightarrow 0} t^{1-2s}\partial_t
\tilde{u}_\epsilon(x,t)=K(r,y'')(u_\epsilon)_{+}^{2^{*}_{s}-1+\epsilon}+\ds\sum_{l=1}^{N}c_{l}\ds\sum_{j=1}^{m}U_{x_{j},\lambda}^{2^{*}_{s}-2}Z_{j,l},\quad  \text{in}\, \R^{N}.
\end{array}
\right.
\end{equation}

First we have the following local Pohozaev identities by translations.
\begin{lemma}\label{le-p1}
If $\tilde{u}_\epsilon$ satisfies \eqref{l-e}, then there holds
\begin{equation}\label{p1}
\begin{split}
&\int_{\partial''\mathcal{B}_{\rho}^{+}(y_0)}t^{1-2s}\frac{\partial\tilde{u}_\epsilon}{\partial \nu}\frac{\partial\tilde{u}_\epsilon}{\partial y_i} dS
+\frac{1}{2^{*}_{s}+\epsilon}\int_{\partial B_{\rho}(y_0)}K(y)(u_\epsilon)_{+}^{2^{*}_{s}+\epsilon}\nu_i ds
\\
&-\frac{1}{2^{*}_{s}+\epsilon}\int_{B_{\rho}(y_0)}\frac{\partial K(y)}{\partial y_i}(u_\epsilon)_{+}^{2^{*}_{s}+\epsilon} dy
-\frac{1}{2}\int_{\partial''\mathcal{B}_{\rho}^{+}(y_0)}t^{1-2s}\nabla|\tilde{u}_\epsilon|^2 \nu_idS
\\
&
+\int_{B_{\rho}(y_0)}\sum_{l=1}^{N}c_{l}\ds\sum_{j=1}^{m} U_{x_{j},\lambda}^{2^{*}_{s}-2}Z_{j,l}\frac{\partial u_\epsilon}{\partial y_i}dy=0,\,\,i=3,\cdots,N.
\end{split}
\end{equation}
\end{lemma}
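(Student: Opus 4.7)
The identity is a standard local Pohozaev identity of translational type, adapted to the degenerate extension operator. The plan is to multiply the first equation in \eqref{l-e} by $\partial_{y_i}\tilde{u}_\epsilon$, integrate over the half-ball $\mathcal{B}_\rho^+(y_0)$, and apply the divergence theorem twice: once in the $(y,t)$ variables to produce boundary pieces on $\partial'$ and $\partial''$, and once in the $y_i$ variable alone on the flat piece to relocate the derivative from $u_\epsilon$ onto $K$.

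More concretely, I would proceed as follows. First, from $\operatorname{div}(t^{1-2s}\nabla \tilde{u}_\epsilon)=0$ I get
\[
0=\int_{\mathcal{B}_\rho^+(y_0)} \operatorname{div}(t^{1-2s}\nabla \tilde{u}_\epsilon)\,\frac{\partial \tilde{u}_\epsilon}{\partial y_i}\,dy\,dt
=\int_{\partial\mathcal{B}_\rho^+(y_0)}t^{1-2s}\frac{\partial\tilde{u}_\epsilon}{\partial\nu}\frac{\partial\tilde{u}_\epsilon}{\partial y_i}\,dS
-\int_{\mathcal{B}_\rho^+(y_0)} t^{1-2s}\nabla\tilde{u}_\epsilon\cdot\nabla\!\left(\frac{\partial\tilde{u}_\epsilon}{\partial y_i}\right)dy\,dt.
\]
Since the weight $t^{1-2s}$ does not depend on $y_i$, the bulk integrand equals $\tfrac12\,\partial_{y_i}\!\bigl(t^{1-2s}|\nabla\tilde{u}_\epsilon|^2\bigr)$, and a second divergence theorem (applied to the vector field $\tfrac12 t^{1-2s}|\nabla\tilde{u}_\epsilon|^2 e_{y_i}$, whose only nonzero component is in a $y$-direction) converts it into a pure boundary integral $\tfrac12\int_{\partial\mathcal{B}_\rho^+(y_0)} t^{1-2s}|\nabla\tilde{u}_\epsilon|^2\,\nu_i\,dS$. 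On the flat piece $\partial'\mathcal{B}_\rho^+(y_0)\subset\{t=0\}$ the outward normal is $-e_t$, so $\nu_i=0$ for $i\in\{3,\dots,N\}$ and that contribution vanishes; on $\partial'$ only the $\partial_t$-part of the first boundary integral survives, and the boundary condition in \eqref{l-e} together with $\nu=-e_t$ transforms it into $\int_{B_\rho(y_0)}\bigl[K(y)(u_\epsilon)_+^{2^*_s-1+\epsilon}+\sum_l c_l\sum_j U_{x_j,\lambda}^{2^*_s-2}Z_{j,l}\bigr]\partial_{y_i}u_\epsilon\,dy$.

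Finally, on $B_\rho(y_0)$ I would use $(u_\epsilon)_+^{2^*_s-1+\epsilon}\,\partial_{y_i}u_\epsilon=\tfrac{1}{2^*_s+\epsilon}\partial_{y_i}(u_\epsilon)_+^{2^*_s+\epsilon}$ and integrate by parts in the single variable $y_i$ to move the derivative off of $u_\epsilon$ and onto $K$, producing the bulk term $-\tfrac{1}{2^*_s+\epsilon}\int_{B_\rho(y_0)}\tfrac{\partial K}{\partial y_i}(u_\epsilon)_+^{2^*_s+\epsilon}\,dy$ and the boundary term $\tfrac{1}{2^*_s+\epsilon}\int_{\partial B_\rho(y_0)} K(u_\epsilon)_+^{2^*_s+\epsilon}\nu_i\,ds$. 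Collecting every piece, the contributions from $\partial'$ cancel against those from the bulk $t^{1-2s}$-integral on $\partial'$, and what remains are exactly the six terms on the left-hand side of \eqref{p1}.

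The one genuinely delicate point is justifying the limit $t\to 0^+$ in the boundary term $\int_{\partial'} t^{1-2s}(\partial\tilde{u}_\epsilon/\partial\nu)\,\partial_{y_i}\tilde{u}_\epsilon\,dS$, since $t^{1-2s}$ is singular for $s>1/2$ and $\tilde{u}_\epsilon$ is only in the weighted space $L^2(t^{1-2s})$ locally. I would handle this by working first on the truncated half-ball $\mathcal{B}_\rho^+(y_0)\cap\{t>\delta\}$, where every integration by parts is classical, and then passing to the limit $\delta\to 0^+$ using the Caffarelli--Silvestre regularity of $\tilde{u}_\epsilon$ (so that $t^{1-2s}\partial_t\tilde{u}_\epsilon$ has a well-defined trace equal to the right-hand side of the boundary condition in \eqref{l-e}, while $\partial_{y_i}\tilde{u}_\epsilon$ extends continuously up to $\{t=0\}$). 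The boundary terms on the lateral slice $\{t=\delta\}\cap\mathcal{B}_\rho^+(y_0)$ converge to the expressions above, and the terms on $\partial''\mathcal{B}_\rho^+(y_0)\cap\{t>\delta\}$ converge to their counterparts by dominated convergence. This is the only step requiring care; the rest of the argument is bookkeeping of divergence theorems.
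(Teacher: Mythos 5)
Your proposal is correct and follows essentially the same route as the paper's own proof: multiply $\operatorname{div}(t^{1-2s}\nabla\tilde{u}_\epsilon)=0$ by $\partial_{y_i}\tilde{u}_\epsilon$, integrate over $\mathcal{B}_\rho^+(y_0)$, split the boundary into $\partial'$ and $\partial''$, use the Neumann trace condition on $\partial'$, rewrite the bulk term as $\tfrac12\partial_{y_i}(t^{1-2s}|\nabla\tilde{u}_\epsilon|^2)$ (noting $\nu_i=0$ on $\partial'$), and integrate by parts in $y_i$ to move the derivative onto $K$. Your added remark on justifying the $t\to 0^+$ limit via truncation at $\{t=\delta\}$ is a point the paper passes over silently, but it does not change the argument.
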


\begin{proof}
Noting that $\tilde{u}_\epsilon$ satisfies \eqref{l-e}, then we have
\begin{equation}
\begin{split}
0&=\int_{\mathcal{B}_{\rho}^{+}(y_0)}div(t^{1-2s}\nabla\tilde{u}_\epsilon)\frac{\partial\tilde{u}_\epsilon}{\partial y_i} dydt
\\
&=\int_{\partial\mathcal{B}_{\rho}^{+}(y_0)}t^{1-2s}\nabla\tilde{u}_\epsilon\frac{\partial\tilde{u}_\epsilon}{\partial y_i}\nu dS-\int_{\mathcal{B}_{\rho}^{+}(y_0)}t^{1-2s}\nabla\tilde{u}_\epsilon\nabla(\frac{\partial\tilde{u}_\epsilon}{\partial y_i})d ydt
\\
&=\int_{\partial''\mathcal{B}_{\rho}^{+}(y_0)}t^{1-2s}\frac{\partial\tilde{u}_\epsilon}{\partial y_i}\frac{\partial\tilde{u}_\epsilon}{\partial\nu} dS
+\int_{\partial'\mathcal{B}_{\rho}^{+}(y_0)}t^{1-2s}\nabla\tilde{u}_\epsilon\frac{\partial\tilde{u}_\epsilon}{\partial y_i}\nu dS
\\
&\quad-\int_{\mathcal{B}_{\rho}^{+}(y_0)}t^{1-2s}\nabla\tilde{u}_\epsilon\nabla(\frac{\partial\tilde{u}_\epsilon}{\partial y_i})dydt.
\end{split}
\end{equation}
and
\begin{equation}
\begin{split}
&\int_{\partial'\mathcal{B}_{\rho}^{+}(y_0)}t^{1-2s}\nabla\tilde{u}_\epsilon\frac{\partial\tilde{u}_\epsilon}{\partial y_i}\nu dS
\\
&=-	\int_{\partial'\mathcal{B}_{\rho}^{+}(y_0)}t^{1-2s}\partial_t\tilde{u}_\epsilon\frac{\partial\tilde{u}_\epsilon}{\partial y_i}dS\\
&=\int_{B_{\rho}(y_0)}\big[K(r,y'')(u_\epsilon)_{+}^{2^{*}_{s}-1+\epsilon}
+\ds\sum_{l=1}^{N}c_{l}\ds\sum_{j=1}^{m}U_{x_{j},\lambda}^{2^{*}_{s}-2}Z_{j,l}\big]\frac{\partial u_\epsilon}{\partial y_i}dy
\\
&=\frac{1}{2^{*}_{s}+\epsilon}\int_{\partial{B}_{\rho}(y_0)}K(y) (u_\epsilon)_{+}^{2^{*}_{s}+\epsilon}\nu_{i}\,ds
-\frac{1}{2^{*}_{s}+\epsilon}\int_{B_{\rho}(y_0)} \frac{\partial K}{\partial y_{i}}(u_\epsilon)_{+}^{2^{*}_{s}+\epsilon}dy
\\
&\quad+\int_{B_{\rho}(y_0)}\ds\sum_{l=1}^{N}c_{l}\ds\sum_{j=1}^{m}U_{x_{j},\lambda}^{2^{*}_{s}-2}Z_{j,l}\frac{\partial u_\epsilon}{\partial y_i}dy.
\end{split}
\end{equation}
Moreover we have
\begin{equation}
\begin{split}
&-\int_{\mathcal{B}_{\rho}^{+}(y_0)}t^{1-2s}\nabla\tilde{u}_\epsilon\nabla(\frac{\partial\tilde{u}_\epsilon}{\partial y_i})d ydt
=-\frac{1}{2}\int_{\mathcal{B}_{\rho}^{+}(y_0)}t^{1-2s}
\frac{\partial}{\partial y_i}\nabla|\tilde{u}_\epsilon|^2dydt
\\
&=-\frac{1}{2}\int_{\partial\mathcal{B}_{\rho}^{+}(y_0)}t^{1-2s}\nabla|\tilde{u}_\epsilon|^2 \nu_idS
\\
&=-\frac{1}{2}\int_{\partial'\mathcal{B}_{\rho}^{+}(y_0)}t^{1-2s}\nabla|\tilde{u}_\epsilon|^2 \nu_idS
-\frac{1}{2}\int_{\partial''\mathcal{B}_{\rho}^{+}(y_0)}t^{1-2s}\nabla|\tilde{u}_\epsilon|^2 \nu_idS,\quad i=1,\cdots,n
\\
&=-\frac{1}{2}\int_{\partial''\mathcal{B}_{\rho}^{+}(y_0)}t^{1-2s}\nabla|\tilde{u}_\epsilon|^2 \nu_idS,
\end{split}
\end{equation}
 where we use the fact that on $\partial'\mathcal{B}_{\rho}^{+}(y_0)$, $\nu_i=0$, so the first term equals to 0.

Combining all the equations above, we can get \eqref{p1}.
\end{proof}

\medskip
Next, we will obtain a local Pohozaev identity by scaling.

\begin{lemma}\label{le-p2}
If $\tilde{u}_\epsilon$ satisfies \eqref{l-e}, then there holds
\begin{equation}\label{p2}
\begin{split}
&\int_{\partial''\mathcal{B}_{\rho}^{+}(y_0)} t^{1-2s}\langle Y,\nabla\tilde{u}_\epsilon \rangle\frac{\partial\tilde{u}_\epsilon}{\partial \nu}dS
-\frac{1}{2}\int_{\partial''\mathcal{B}_{\rho}^{+}(y_0)}t^{1-2s}|\nabla\tilde{u}_\epsilon|^2\langle Y,\nu\rangle dS
\\
&+\frac{1}{2^{*}_{s}+\epsilon} \int_{\partial B_{\rho}(y_0)} K(r,y'')(u_\epsilon)_{+}^{2^{*}_{s}+\epsilon}\langle y,\nu\rangle ds
-\frac{N}{2^{*}_{s}+\epsilon}\int_{ B_{\rho}(y_0)}K(r,y'') (u_\epsilon)_{+}^{2^*_{s}+\epsilon} dy
\\
&
-\frac{1}{2^{*}_{s}+\epsilon}
\int_{B_{\rho}(y_0)}\langle \nabla K,y\rangle(u_\epsilon)_{+}^{2^*_{s}+\epsilon} dy
+\frac{N-2s}{2}\int_{\mathcal{B}_{\rho}^{+}(y_0)}t^{1-2s}|\nabla\tilde{u}_\epsilon|^{2} dS\\
&
+\int_{B_{\rho}(y_0)}\ds\sum_{l=1}^{N}c_{l}\sum_{j=1}^{m}U_{x_{j},\lambda}^{2^{*}_{s}-2}Z_{j,l}
\langle y,\nabla u_\epsilon\rangle dy
=0.
\end{split}
\end{equation}
\end{lemma}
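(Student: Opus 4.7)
The plan is to derive \eqref{p2} via the standard Pohozaev multiplier $\langle Y,\nabla\tilde{u}_\epsilon\rangle$ applied to the extension equation $\mathrm{div}(t^{1-2s}\nabla\tilde{u}_\epsilon)=0$, followed by bookkeeping of the boundary terms on $\partial'\mathcal{B}_\rho^+(y_0)$ using the Neumann-type boundary condition in \eqref{l-e}. Concretely, I would start from the identity
\begin{equation*}
0=\int_{\mathcal{B}_\rho^+(y_0)}\mathrm{div}(t^{1-2s}\nabla\tilde{u}_\epsilon)\,\langle Y,\nabla\tilde{u}_\epsilon\rangle\,dY
\end{equation*}
and apply the divergence theorem in $\R^{N+1}_+$ to obtain
\begin{equation*}
\int_{\partial\mathcal{B}_\rho^+(y_0)}t^{1-2s}\frac{\partial\tilde{u}_\epsilon}{\partial\nu}\langle Y,\nabla\tilde{u}_\epsilon\rangle\,dS
=\int_{\mathcal{B}_\rho^+(y_0)}t^{1-2s}\nabla\tilde{u}_\epsilon\cdot\nabla\bigl(\langle Y,\nabla\tilde{u}_\epsilon\rangle\bigr)\,dYdt.
\end{equation*}

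For the volume integrand I would use the algebraic identity $\nabla\tilde{u}\cdot\nabla(\langle Y,\nabla\tilde{u}\rangle)=|\nabla\tilde{u}|^2+\tfrac12\langle Y,\nabla|\nabla\tilde{u}|^2\rangle$, and then integrate $\tfrac12\int t^{1-2s}\langle Y,\nabla|\nabla\tilde{u}|^2\rangle\,dYdt$ by parts. The key computation is that $\partial_{y_i}(t^{1-2s}y_i)=t^{1-2s}$ and $\partial_t(t\cdot t^{1-2s})=(2-2s)t^{1-2s}$, so the weighted divergence of $t^{1-2s}Y$ equals $(N+2-2s)t^{1-2s}$. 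This produces a factor $-\frac{N+2-2s}{2}$ times $\int t^{1-2s}|\nabla\tilde{u}|^2$ plus the curved boundary term $\tfrac12\int_{\partial\mathcal{B}_\rho^+}t^{1-2s}|\nabla\tilde{u}|^2\langle Y,\nu\rangle\,dS$. Combining with the $+\int t^{1-2s}|\nabla\tilde{u}|^2$ term yields the coefficient $\tfrac{N-2s}{2}$ in the statement.

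Next I would split $\partial\mathcal{B}_\rho^+=\partial'\mathcal{B}_\rho^+\cup\partial''\mathcal{B}_\rho^+$. On $\partial'\mathcal{B}_\rho^+$, $t=0$ so $\langle Y,\nu\rangle=-t\equiv0$ and the $|\nabla\tilde{u}|^2\langle Y,\nu\rangle$ term disappears; for the $\frac{\partial\tilde{u}_\epsilon}{\partial\nu}\langle Y,\nabla\tilde{u}_\epsilon\rangle$ boundary piece, $\nu=(0,\ldots,0,-1)$ gives $t^{1-2s}\frac{\partial\tilde{u}_\epsilon}{\partial\nu}=-t^{1-2s}\partial_t\tilde{u}_\epsilon$, which in the limit $t\to0$ is $K(r,y'')(u_\epsilon)_+^{2^*_s-1+\epsilon}+\sum_l c_l\sum_j U_{x_j,\lambda}^{2^*_s-2}Z_{j,l}$ by \eqref{l-e}, while $\langle Y,\nabla\tilde{u}_\epsilon\rangle$ restricts to $\langle y,\nabla u_\epsilon\rangle$. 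This produces the $c_l$-sum in \eqref{p2} plus the integral $\int_{B_\rho(y_0)}K(r,y'')(u_\epsilon)_+^{2^*_s-1+\epsilon}\langle y,\nabla u_\epsilon\rangle\,dy$.

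Finally, I would rewrite that last integral via $(u_\epsilon)_+^{2^*_s-1+\epsilon}\langle y,\nabla u_\epsilon\rangle=\tfrac{1}{2^*_s+\epsilon}\langle y,\nabla(u_\epsilon)_+^{2^*_s+\epsilon}\rangle$ and integrate by parts on $B_\rho(y_0)$ using $\mathrm{div}(y)=N$. This yields exactly $-\tfrac{N}{2^*_s+\epsilon}\int K(u_\epsilon)_+^{2^*_s+\epsilon}$, the curved boundary contribution $\tfrac{1}{2^*_s+\epsilon}\int_{\partial B_\rho}K(u_\epsilon)_+^{2^*_s+\epsilon}\langle y,\nu\rangle$, and the gradient-of-$K$ term $-\tfrac{1}{2^*_s+\epsilon}\int\langle\nabla K,y\rangle(u_\epsilon)_+^{2^*_s+\epsilon}$, assembling the identity \eqref{p2}. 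The only real care point is justifying the integration by parts at $t=0$ with the degenerate weight $t^{1-2s}$; this is standard because $\tilde{u}_\epsilon\in C^\infty(\R^{N+1}_+)$ and $t^{1-2s}\partial_t\tilde{u}_\epsilon$ has a continuous limit up to $t=0$ on the portion of the boundary where $K(u_\epsilon)_+^{2^*_s-1+\epsilon}$ is bounded, so one can approximate by $\{t>\eta\}$ and pass $\eta\to 0^+$ noting that $t^{2-2s}|\nabla\tilde{u}|^2\to 0$ on $\{t=\eta\}$ in the limit. No other step is subtle; everything else is a careful product-rule/divergence-theorem accounting.
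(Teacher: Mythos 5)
Your proposal is correct and follows essentially the same route as the paper: multiply the extension equation by $\langle Y,\nabla\tilde{u}_\epsilon\rangle$, apply the divergence theorem on $\mathcal{B}_\rho^{+}(y_0)$, use $\operatorname{div}(t^{1-2s}Y)=(N+2-2s)t^{1-2s}$ to produce the coefficient $\tfrac{N-2s}{2}$, convert the $\partial'\mathcal{B}_\rho^{+}$ contribution via the Neumann condition in \eqref{l-e}, and integrate the resulting $K(u_\epsilon)_+^{2^*_s-1+\epsilon}\langle y,\nabla u_\epsilon\rangle$ term by parts on $B_\rho(y_0)$. The only cosmetic difference is that the paper packages the computation as a single pointwise divergence identity before integrating, while you perform two successive integrations by parts; your reason for discarding the $|\nabla\tilde{u}_\epsilon|^2\langle Y,\nu\rangle$ term on $\partial'\mathcal{B}_\rho^{+}$ (namely $\langle Y,\nu\rangle=-t=0$ there) is in fact cleaner than the paper's stated justification.
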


\begin{proof}
Since
\begin{equation}
\begin{split}
div (t^{1-2s}\nabla\tilde{u}_\epsilon )\langle Y,\nabla\tilde{u}_\epsilon\rangle
\,=\,&div \Big(t^{1-2s}\nabla\tilde{u}_\epsilon\langle Y,\nabla\tilde{u}_\epsilon\rangle-t^{1-2s}Y\frac{|\nabla\tilde{u}_\epsilon|^2}{2}\Big)
\\
&-t^{1-2s}|\nabla\tilde{u}_\epsilon|^2
+\frac{div(t^{1-2s}Y)}{2}|\nabla\tilde{u}_\epsilon|^2,
\end{split}
\end{equation}
 we have
\begin{align*}
 0=&\int_{\mathcal{B}_{\rho}^{+}(y_0)}
 div (t^{1-2s}\nabla\tilde{u}_\epsilon )\langle Y,\nabla\tilde{u}_\epsilon\rangle dy dt
 \\
 =&\int_{\mathcal{B}_{\rho}^{+}(y_0)}div (t^{1-2s}\nabla\tilde{u}_\epsilon\langle Y,\nabla\tilde{u}_\epsilon\rangle-t^{1-2s}Y\frac{|\nabla\tilde{u}_\epsilon|^2}{2})dydt
 \\
&-\int_{\mathcal{B}_{\rho}^{+}(y_0)} \Big[-t^{1-2s}|\nabla\tilde{u}_\epsilon|^2
+\frac{div(t^{1-2s}Y)}{2}|\nabla\tilde{u}_\epsilon|^2\Big] dydt.
\end{align*}
It is easy to derive that
\begin{equation}
\begin{split}
&\int_{\mathcal{B}_{\rho}^{+}(y_0)}div (t^{1-2s}\nabla\tilde{u}_\epsilon\langle Y,\nabla\tilde{u}_\epsilon\rangle-t^{1-2s}Y\frac{|\nabla\tilde{u}_\epsilon|^2}{2})dydt
\\
&=\int_{\partial\mathcal{B}_{\rho}^{+}(y_0)} \big[t^{1-2s}\nabla\tilde{u}_\epsilon\langle Y,\nabla\tilde{u}_\epsilon\rangle\nu
-t^{1-2s}Y\frac{|\nabla\tilde{u}_\epsilon|^2}{2}\nu  \big]dS
\\
&=\int_{\partial''\mathcal{B}_{\rho}^{+}(y_0)}\big[ t^{1-2s}\nabla\tilde{u}_\epsilon\langle Y,\nabla\tilde{u}_\epsilon\rangle\nu
-t^{1-2s}Y\frac{|\nabla\tilde{u}_\epsilon|^2}{2}\nu \big] dS
\\
&\quad+\int_{\partial'\mathcal{B}_{\rho}^{+}(y_0)} \big[t^{1-2s}\nabla\tilde{u}_\epsilon\langle Y,\nabla\tilde{u}_\epsilon\rangle\nu
-t^{1-2s}Y\frac{|\nabla\tilde{u}_\epsilon|^2}{2}\nu \big] dS
\\
&:=A_1+A_2.
\end{split}
\end{equation}
Noting that
$$
\partial'\mathcal{B}_{\rho}^{+}(y_0)=\{Y=(y,t):|Y-(y_0,0)|\leq  \rho \quad and \quad t=0\}\in \R^{N},
$$
and $-t^{1-2s}Y\frac{|\nabla\tilde{u}_\epsilon|^2}{2}\nu$ is odd about $y$,
we have
\begin{equation}
\begin{split}
A_2&=\int_{\partial'\mathcal{B}_{\rho}^{+}(y_0)} t^{1-2s}\nabla\tilde{u}_\epsilon\langle Y,\nabla\tilde{u}_\epsilon\rangle\nu  dS
=\int_{\partial'\mathcal{B}_{\rho}^{+}(y_0)} -t^{1-2s}\partial_t \tilde{u}_\epsilon\langle Y,\nabla \tilde{u}_\epsilon\rangle dS
 \\
 &=\int_{{B}_{\rho}(y_0)}\Big[K(r,y'')(u_\epsilon)_{+}^{2^{*}_{s}-1+\epsilon}
 +\ds\sum_{l=1}^{N}c_{l}\ds\sum_{j=1}^{m}Z_{x_{j},\lambda}^{2^{*}_{s}-2}Z_{j,l}\Big]\langle y,\nabla u_\epsilon\rangle dy
 \\
 &=\frac{1}{2^{*}_{s}+\epsilon}\int_{\partial B_{\rho}(y_0)}K(r,y'') (u_\epsilon)_{+}^{2^{*}_{s}+\epsilon}\langle y,\nu\rangle ds
 -\frac{1}{2^{*}_{s}+\epsilon}\int_{B_{\rho}(y_0)}\langle y,\nabla K\rangle(u_\epsilon)_{+}^{2^{*}_{s}+\epsilon} dy
 \\
 &
\quad -\frac{N}{2^*_{s}+\epsilon}\int_{B_{\rho}(y_0)}K(r,y'')(u_\epsilon)_{+}^{2^{*}_{s}+\epsilon} dy
 +\int_{B_{\rho}(y_0)}\sum_{l=1}^{N}c_{l}\ds\sum_{j=1}^{m}U_{x_{j},\lambda}^{2^{*}_{s}-2}Z_{j,l}\langle y,\nabla u_\epsilon\rangle dy.
 \end{split}
\end{equation}

Direct calculates will give that
\begin{equation}
\begin{split}
\frac{div(t^{1-2s}Y)}{2}|\nabla\tilde{u}_\epsilon|^2=&\frac{t^{1-2s}+t^{1-2s}+\cdots+(2-2s)t^{1-2s}}{2}|\nabla\tilde{u}_\epsilon|^2
\\
=&\frac{N+2-2s}{2}t^{1-2s}|\nabla\tilde{u}_\epsilon|^2.
\end{split}
\end{equation}

Then it follows from all the equations above that
\begin{equation}\label{p22}
\begin{split}
&\int_{\partial''\mathcal{B}_{\rho}^{+}(y_0)} t^{1-2s}\nabla\tilde{u}_\epsilon\langle Y,\nabla\tilde{u}_\epsilon\rangle\nu dS
-\frac{1}{2}\int_{\partial''\mathcal{B}_{\rho}^{+}(y_0)}t^{1-2s}|\nabla\tilde{u}_\epsilon|^2\langle Y,\nu\rangle dS
\\
&+\frac{1}{2^{*}_{s}+\epsilon} \int_{\partial B_{\rho}(y_0)} K(r,y'')(u_\epsilon)_{+}^{2^{*}_{s}+\epsilon}\langle y,\nu\rangle ds
-\frac{N}{2^{*}_{s}+\epsilon}\int_{ B_{\rho}(y_0)}K(r,y'') (u_\epsilon)_{+}^{2^*_{s}+\epsilon} dy
\\
&
-\frac{1}{2^{*}_{s}+\epsilon}
\int_{B_{\rho}(y_0)}\langle \nabla K,y\rangle(u_\epsilon)_{+}^{2^*_{s}+\epsilon} dy
+\frac{N-2s}{2}\int_{\mathcal{B}_{\rho}^{+}(y_0)}t^{1-2s}|\nabla\tilde{u}_\epsilon|^2 dydt
\\
&+\int_{B_{\rho}(y_0)}\ds\sum_{l=1}^{N}c_{l}\ds\sum_{j=1}^{m}U_{x_{j},\lambda}^{2^{*}_{s}-2}Z_{j,l}\langle y,\nabla u_\epsilon\rangle dy\,=\,0.
\end{split}
\end{equation}
\end{proof}

%
%


%
\section{{Basic Estimates}}\label{s4}

For each fixed $k$ and $j$, $k\neq j$, we consider the following
function
\begin{equation}\label{b.1}
g_{k,j}(y)=\frac{1}{(1+|y-x_{j}|)^{\alpha}}\frac{1}{(1+|y-x_{k}|)^{\beta}},
\end{equation}
where $\alpha\geq 1$ and $\beta\geq 1$ are two constants.
 \begin{lem}\label{lemb1}(Lemma B.1, \cite{WY1})
 For any constants $0<\delta\leq \min\{\alpha,\beta\}$, there is a constant $C>0$, such that
 $$
 g_{k,j}(y)\leq \frac{C}{|x_{k}-x_{j}|^{\delta}}\Big(\frac{1}{(1+|y-x_{k}|)^{\alpha+\beta-\delta}}+\frac{1}{(1+|y-x_{j}|)^{\alpha+\beta-\delta}}\Big).
 $$
 \end{lem}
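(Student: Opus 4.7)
The plan is to prove the estimate by a direct case split on which of the two centers $x_j,x_k$ is closer to the evaluation point $y$. The key observation is that the right-hand side is symmetric under the exchange $(\alpha,x_j)\leftrightarrow(\beta,x_k)$, so it suffices to show that in the region $\{|y-x_j|\le|y-x_k|\}$ the function $g_{k,j}(y)$ is bounded by the first summand on the right (up to $C|x_k-x_j|^{-\delta}$), and then invoke symmetry for the complementary region; summing the two bounds extends the inequality to all of $\R^N$.

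Concretely, in the region $|y-x_j|\le|y-x_k|$ the triangle inequality gives $|x_k-x_j|\le|y-x_j|+|y-x_k|\le 2|y-x_k|$, which yields both
\[
1+|y-x_k|\ \ge\ c(1+|x_k-x_j|)\ \ge\ c\,|x_k-x_j|
\qquad\text{and}\qquad
1+|y-x_k|\ \ge\ 1+|y-x_j|.
\]
Using $0<\delta\le\beta$, I would split the $k$-factor as $(1+|y-x_k|)^{\beta}=(1+|y-x_k|)^{\delta}\,(1+|y-x_k|)^{\beta-\delta}$; the first piece is $\ge c\,|x_k-x_j|^{\delta}$ by the first inequality, while the second piece is $\ge (1+|y-x_j|)^{\beta-\delta}$ by the second inequality (since the exponent is nonnegative). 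Combining this with the surviving $(1+|y-x_j|)^{\alpha}$ factor in the denominator of $g_{k,j}$ produces
\[
g_{k,j}(y)\ \le\ \frac{C}{|x_k-x_j|^{\delta}\,(1+|y-x_j|)^{\alpha+\beta-\delta}},
\]
which is controlled by the first term inside the parentheses on the right-hand side. The symmetric argument (using $\delta\le\alpha$) handles the complementary region $|y-x_k|<|y-x_j|$ and produces the second term.

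The only mildly delicate point, which I would not really classify as an obstacle, is the regime $|x_k-x_j|$ of order $1$ or smaller, where the bound $1+|y-x_k|\ge c|x_k-x_j|$ loses all content; there the claim is essentially trivial because $g_{k,j}(y)\le 1$ and $|x_k-x_j|^{-\delta}$ is bounded below, so one absorbs everything into the constant $C$. Apart from this routine bookkeeping, the whole argument is just the triangle inequality applied twice, together with the hypothesis $0<\delta\le\min\{\alpha,\beta\}$ which is precisely what makes the exponents $\beta-\delta$ and $\alpha-\delta$ nonnegative in the two subregions.
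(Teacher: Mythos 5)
Your argument is correct and is exactly the standard proof of this lemma: the paper itself gives no proof, quoting it as Lemma B.1 of \cite{WY1}, and the proof there is precisely your two-region split combined with the triangle inequality $|x_k-x_j|\le 2\max\{|y-x_j|,|y-x_k|\}$ and the factorization $(1+|y-x_k|)^{\beta}=(1+|y-x_k|)^{\delta}(1+|y-x_k|)^{\beta-\delta}$. The only blemishes are cosmetic: on the region $|y-x_j|\le|y-x_k|$ your computation actually produces the \emph{second} summand $(1+|y-x_j|)^{-(\alpha+\beta-\delta)}$ rather than the first, and the worry about small $|x_k-x_j|$ is unnecessary since $1+|y-x_k|\ge\tfrac12|x_k-x_j|$ holds there with no loss and the main derivation already covers that case uniformly.
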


\begin{lem}\label{lemb2}(Lemma 2.1, \cite{GN-16-DCDS})
For any constant $0<\delta<N-2s$, there is a constant $C>0$, such
that
$$
\int_{\R^N} \frac{1}{|y-z|^{N-2s}}\frac{1}{(1+|z|)^{2s+\delta}}dz\leq
\frac{C}{(1+|y|)^{\delta}}.
$$
\end{lem}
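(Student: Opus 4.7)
The plan is to split the integration domain into regions where one of the two factors in the integrand dominates, handling the regime $|y|\le 1$ separately from $|y|>1$, and then patching the estimates together. No cancellation or delicate analysis is needed; the lemma reduces to direct size estimates of the kernel and the weight.

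For $|y|\le 1$ the target right-hand side $C(1+|y|)^{-\delta}$ is bounded below by a positive constant, so it suffices to show the integral is uniformly bounded. Split $\R^N$ into $\{|z|\le 2\}\cup\{|z|>2\}$: on the inner piece $(1+|z|)^{-2s-\delta}\le 1$ and $\int_{|z|\le 2}|y-z|^{-(N-2s)}\,dz\le\int_{|w|\le 3}|w|^{-(N-2s)}\,dw<\infty$ since $N-2s<N$; on the outer piece $|y-z|\ge |z|/2$, so the integrand is bounded by $C|z|^{-(N+\delta)}$, which is integrable at infinity because $\delta>0$.

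For $|y|>1$ I would decompose $\R^N$ into three regions,
\[
\mathrm{I}=B_{|y|/2}(y),\qquad \mathrm{II}=B_{|y|/2}(0),\qquad \mathrm{III}=\R^N\setminus(\mathrm{I}\cup\mathrm{II}),
\]
and estimate each contribution. On $\mathrm{I}$ one has $|z|\asymp|y|$, hence $(1+|z|)^{-2s-\delta}\le C|y|^{-2s-\delta}$, while $\int_{B_{|y|/2}(y)}|y-z|^{-(N-2s)}\,dz=C|y|^{2s}$ by a direct polar computation, so this piece contributes $\le C|y|^{-\delta}$. On $\mathrm{II}$ one has $|y-z|\asymp|y|$, so pulling out $|y|^{-(N-2s)}$ leaves $\int_{|z|\le|y|/2}(1+|z|)^{-2s-\delta}\,dz$, which is $O(|y|^{N-2s-\delta})$ when $2s+\delta<N$ and $O(1)$ when $2s+\delta>N$; in either case, using $\delta<N-2s$, the product is $\le C|y|^{-\delta}$. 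On $\mathrm{III}$ both $|z|\ge|y|/2$ and $|y-z|\ge|y|/2$, and I would split once more into $\{|z|\le 2|y|\}$ and $\{|z|>2|y|\}$: in the first sub-region $(1+|z|)^{-2s-\delta}\le C|y|^{-2s-\delta}$ and $\int_{B_{3|y|}(y)}|y-z|^{-(N-2s)}\,dz=C|y|^{2s}$, again giving $C|y|^{-\delta}$; in the second $|y-z|\ge|z|/2$ so the integrand is $\le C|z|^{-(N+\delta)}$, whose integral over $\{|z|>2|y|\}$ is $C|y|^{-\delta}$.

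The main obstacle is bookkeeping rather than genuine analytic difficulty: one must check that the strict inequality $\delta<N-2s$ is used in region $\mathrm{II}$ to rule out the borderline case $2s+\delta=N$, in which a logarithmic factor would appear and spoil the power-type bound, and to guarantee that $|y|^{-(N-2s)}\le |y|^{-\delta}$ when $2s+\delta>N$. With these two points kept in mind, summing the contributions from the five sub-regions yields the desired estimate $C(1+|y|)^{-\delta}$.
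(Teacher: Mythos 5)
Your argument is correct, and every region estimate checks out: the polar computation $\int_{B_R(y)}|y-z|^{-(N-2s)}\,dz=CR^{2s}$ is valid since $s>0$, and the hypothesis $\delta<N-2s$ is used exactly where you say it is (to exclude the logarithmic borderline $2s+\delta=N$ in region $\mathrm{II}$ and to get $|y|^{-(N-2s)}\le|y|^{-\delta}$ there). The paper does not prove this lemma but cites it from \cite{GN-16-DCDS}, and your region decomposition is precisely the standard proof of such convolution estimates, so there is nothing to add.
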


\begin{lem}\label{lembb5}(Lemma A.5, \cite{gln-19})
Suppose that $(y-x)^{2}+t^{2}=\rho^{2},t>0.$ Then there exists a constant $C>0$ such that
$$
|\tilde{U}_{x_{j},\lambda}|\leq \frac{C}{\lambda^{\frac{N-2s}{2}}(1+|y-x_{j}|)^{N-2s}}\,\,\,
\text{and}\,\,|\nabla \tilde{U}_{x_{j},\lambda}|\leq \frac{C}{\lambda^{\frac{N-2s}{2}}(1+|y-x_{j}|)^{N-2s+1}}.
$$
\end{lem}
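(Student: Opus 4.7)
The plan is to combine the scaling covariance of the Caffarelli--Silvestre extension with a pointwise decay estimate for the extension of the standard bubble $U_{0,1}$, and then exploit that on the hemisphere $(y-y_0)^2+t^2=\rho^2$ the point $(y,t)$ stays a uniform positive distance from the singular point $(x_j,0)$, since $|x_j-y_0|\ll\rho$ by \eqref{r}.

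First, the identity $U_{x_j,\lambda}(y)=\lambda^{(N-2s)/2}U_{0,1}(\lambda(y-x_j))$ together with the scale covariance of the extension problem $\mathrm{div}(t^{1-2s}\nabla\tilde u)=0$ yields
$$\tilde U_{x_j,\lambda}(y,t)=\lambda^{(N-2s)/2}\tilde U_{0,1}\bigl(\lambda(y-x_j),\lambda t\bigr),$$
with an analogous formula for the gradient carrying an extra factor $\lambda$. Hence it suffices to prove
$$|\tilde U_{0,1}(\eta,s)|\le \frac{C}{(1+|\eta|^2+s^2)^{(N-2s)/2}},\qquad |\nabla\tilde U_{0,1}(\eta,s)|\le \frac{C}{(1+|\eta|^2+s^2)^{(N-2s+1)/2}}.$$
Both bounds would be extracted from the Poisson representation $\tilde U_{0,1}(\eta,s)=\beta(N,s)\int_{\R^N}\frac{s^{2s}\,U_{0,1}(\xi)}{(|\eta-\xi|^2+s^2)^{(N+2s)/2}}\,d\xi$ by splitting the integration into the regions $|\xi|\le\tfrac12\sqrt{|\eta|^2+s^2}$ and $|\xi|>\tfrac12\sqrt{|\eta|^2+s^2}$, and using the known decay $U_{0,1}(\xi)\le C(1+|\xi|)^{-(N-2s)}$; the gradient estimate follows by differentiating the Poisson kernel under the integral sign and repeating the same splitting.

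Finally, on the hemisphere $(y-y_0)^2+t^2=\rho^2$ with $t>0$, since $|x_j-y_0|\ll\rho$ one has
$|y-x_j|^2+t^2\ge(\rho-|x_j-y_0|)^2\ge c>0$, so for $\lambda$ large,
$1+\lambda^2|y-x_j|^2+\lambda^2 t^2\ge c'\lambda^2(|y-x_j|^2+t^2)$. Substituting into the scaled bounds gives
$$|\tilde U_{x_j,\lambda}(y,t)|\le \frac{C}{\lambda^{(N-2s)/2}(|y-x_j|^2+t^2)^{(N-2s)/2}},$$
and because $|y-x_j|\le\rho+|x_j-y_0|$ is uniformly bounded, one checks that $(|y-x_j|^2+t^2)^{(N-2s)/2}\ge c''(1+|y-x_j|)^{N-2s}$ on the hemisphere, yielding the first claim; the gradient inequality follows identically with one additional power of $(1+|y-x_j|)^{-1}$ coming from the sharper decay of $|\nabla\tilde U_{0,1}|$. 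The only step requiring genuine work is the Poisson-kernel decay for $\tilde U_{0,1}$ and its derivative; the remainder is scaling and the distance-to-singularity lower bound on the hemisphere.
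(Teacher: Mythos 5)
The paper does not actually prove this lemma: it is quoted verbatim from Lemma A.5 of \cite{gln-19}, so there is no in-paper argument to compare against. Your scaling reduction $\tilde U_{x_j,\lambda}(y,t)=\lambda^{(N-2s)/2}\tilde U_{0,1}(\lambda(y-x_j),\lambda t)$ is correct, and the Poisson-kernel splitting does give the zeroth-order decay $|\tilde U_{0,1}(\eta,\theta)|\le C(1+|\eta|^2+\theta^2)^{-(N-2s)/2}$. The difficulties are in the other two steps.

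First, the intermediate claim $|\nabla\tilde U_{0,1}(\eta,\theta)|\le C(1+|\eta|^2+\theta^2)^{-(N-2s+1)/2}$ cannot be obtained by ``differentiating the Poisson kernel under the integral sign and repeating the splitting,'' and it is in fact false near $\theta=0$ when $s<1/2$ (a case permitted by Theorem \ref{thm1.1}, e.g.\ $N=5$): the Neumann condition forces $\partial_\theta\tilde U_{0,1}(\eta,\theta)=-d_s^{-1}\theta^{2s-1}\bigl((-\Delta)^sU_{0,1}(\eta)+o(1)\bigr)$ as $\theta\to0^+$, which is unbounded, and the hemisphere contains points with $t$ (hence $\lambda t$) arbitrarily small, so this region is not avoided. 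A correct treatment must separate the tangential derivatives, for which one writes $\nabla_\eta\tilde U_{0,1}=\mathcal P_s\ast\nabla U_{0,1}$ and, when $2s<1$, uses the cancellation $\int_{\R^N}\nabla U_{0,1}=0$ to recover the extra power of decay (your crude splitting only yields $R^{-N}$ there), from the normal derivative, for which only a weighted estimate of the type $\theta^{1-2s}|\partial_\theta\tilde U_{0,1}|\lesssim\cdots$ can hold pointwise. Second, the geometric step ``$|x_j-y_0|\ll\rho$ by \eqref{r}'' is not what \eqref{r} says: it places $(\bar r,\bar y'')$ near $(r_0,y_0'')$, but the points $x_j$ are distributed over the whole circle of radius $\bar r$, so $|x_j-y_0|$ ranges from $O(1/m)$ up to roughly $2r_0$, and some $x_j$ lie within $O(1/m)$ of the sphere $\partial B_\rho(y_0)$. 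For those indices the uniform lower bound $|y-x_j|^2+t^2\ge c>0$ on the hemisphere fails, and your substitution does not produce the stated inequality (indeed near such a point $\tilde U_{x_j,\lambda}$ is of size $\lambda^{-(N-2s)/2}(1+\lambda|y-x_j|)^{-(N-2s)}\lambda^{N-2s}$, which exceeds the claimed bound). You must treat separately the $x_j$ well inside, well outside, and close to $\partial B_\rho(y_0)$, or justify the hypothesis $(y-x)^2+t^2=\rho^2$ as referring to a sphere centered at $x_j$ itself as in \cite{gln-19}.
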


\begin{lem}\label{lembb6}(Lemma A.6, \cite{gln-19})
For any $\delta>0,$ there is a $\rho=\rho(\delta)\in (2\delta,5\delta)$ such that
$$
\int_{\partial'' \mathcal{B}_{\rho}^{+}}t^{1-2s}|\nabla \tilde{\varphi}|^{2} dydt\leq  C\frac{m}{\lambda^{\tau}}\|\varphi\|^{2}_{*},
$$
where $C>0$ is a constant, depending on $\delta.$
\end{lem}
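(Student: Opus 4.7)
The plan is to combine an averaging (Fubini) argument in the radial parameter with a Poisson-kernel pointwise estimate for the harmonic extension $\tilde\varphi$. Because the statement only requires the existence of \emph{some} admissible $\rho\in(2\delta,5\delta)$, Fubini's theorem gives
\begin{equation*}
\int_{2\delta}^{5\delta}\Big(\int_{\partial''\mathcal{B}_r^+(y_0)}t^{1-2s}|\nabla\tilde\varphi|^2\,dS\Big)dr = \int_{\mathcal{B}_{5\delta}^+(y_0)\setminus\mathcal{B}_{2\delta}^+(y_0)}t^{1-2s}|\nabla\tilde\varphi|^2\,dydt,
\end{equation*}
so by the mean-value principle there exists $\rho\in(2\delta,5\delta)$ at which the surface integral on the left is at most $(3\delta)^{-1}$ times the annular volume integral on the right. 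It therefore suffices to bound this annular volume integral by $Cm\lambda^{-\tau}\|\varphi\|_*^2$.

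For the annular integral the key is a pointwise estimate for $|\nabla\tilde\varphi(y,t)|$. By \eqref{r} the bubble centers $x_j$ all lie within $O(\epsilon^{(1+\iota)/(N-2s)})$ of $y_0$, so on the annulus one has $|y-x_j|\ge c\delta$ for every $j$ and $t\in(0,5\delta]$. Differentiating the Poisson representation $\tilde\varphi(y,t)=\int_{\R^N}\mathcal{P}_s(y-\xi,t)\varphi(\xi)\,d\xi$ inside the integral yields
\begin{equation*}
|\nabla\tilde\varphi(y,t)|\le C\int_{\R^N}\frac{t^{2s-1}|\varphi(\xi)|}{(|y-\xi|^2+t^2)^{(N+2s)/2}}\,d\xi.
\end{equation*}
Inserting the $\|\cdot\|_*$-bound on $|\varphi|$, splitting the $\xi$-domain into the balls $\{|\xi-x_j|\le\delta/2\}$ and their complement, applying the rescaling $\eta=\lambda(\xi-x_j)$ on each piece, and invoking Lemmas~\ref{lemb1}--\ref{lemb2} to control the convolution, one extracts a pointwise bound of the schematic form
\begin{equation*}
|\nabla\tilde\varphi(y,t)|^2 \le C(\delta)\,\|\varphi\|_*^2\sum_{j=1}^m\frac{1}{\lambda^{2\tau}(1+|y-x_j|)^{N-2s+2}}.
\end{equation*}

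Integrating this pointwise bound against $t^{1-2s}$ on the bounded annulus, each of the $m$ summands contributes $O(\lambda^{-2\tau})$, so the annular integral is bounded by $Cm\lambda^{-2\tau}\|\varphi\|_*^2\le Cm\lambda^{-\tau}\|\varphi\|_*^2$ (using $\lambda^{-\tau}\le 1$ for $\epsilon$ small). Combined with the first step this yields the claim. The main obstacle will be the Poisson-kernel pointwise estimate: one must extract the $\lambda^{-\tau}$ decay from the convolution of $\varphi$ against $\nabla\mathcal{P}_s$ while keeping the correct $m$-fold structure. The most technical point is the near-field analysis on each $\{|\xi-x_j|\le\delta/2\}$, where the rescaling $\eta=\lambda(\xi-x_j)$ turns the weighted pointwise bound on $\varphi$ into a $\xi$-integrable kernel, and cross terms between distinct bubble centers are handled by trading a decay factor for $\lambda|x_i-x_j|$ via Lemma~\ref{lemb1}. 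All other steps reduce to the routine convolution estimates already used throughout Section~\ref{s2}.
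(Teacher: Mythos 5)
Your first step---the coarea/Fubini reduction to the annular volume integral $\int_{\mathcal{B}_{5\delta}^{+}\setminus\mathcal{B}_{2\delta}^{+}}t^{1-2s}|\nabla\tilde\varphi|^{2}$ followed by a mean-value choice of $\rho\in(2\delta,5\delta)$---is correct and is exactly how this kind of lemma is proved in the sources the paper cites. The gap is in the second step. Differentiating the Poisson representation and putting absolute values inside the integral does \emph{not} produce a usable pointwise bound: one has $\int_{\R^{N}}t^{2s-1}\big(|z|^{2}+t^{2}\big)^{-(N+2s)/2}dz=Ct^{-1}$ (equivalently, $\|\nabla\mathcal{P}_{s}(\cdot,t)\|_{L^{1}(\R^{N})}\sim t^{-1}$), so the near-field part of your convolution---the $\xi$ within distance $O(t)$ of $y$, which lie in the \emph{complement} of all the balls $\{|\xi-x_{j}|\le\delta/2\}$ because $y$ sits on the annulus---contributes $C(\delta)\,m\lambda^{-\tau}\|\varphi\|_{*}\,t^{-1}$, not a quantity uniform in $t$. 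Hence your schematic bound $|\nabla\tilde\varphi|^{2}\le C\|\varphi\|_{*}^{2}\sum_{j}\lambda^{-2\tau}(1+|y-x_{j}|)^{-(N-2s+2)}$ does not follow from the stated kernel estimate; and if one keeps the true factor $t^{-1}$, the weighted integral behaves like $\int_{0}t^{1-2s}\cdot t^{-2}\,dt$, which diverges for every $s\in(0,1)$, i.e.\ the argument fails precisely where the weight matters. The blow-up is an artifact of discarding cancellation: $\nabla_{y}\mathcal{P}_{s}(\cdot,t)$ has mean zero and in fact $\nabla_{y}\tilde\varphi=\mathcal{P}_{s}[\nabla\varphi]$, but exploiting this requires pointwise control of $\nabla\varphi$, which the norm $\|\cdot\|_{*}$ does not provide without a separate elliptic-regularity step; moreover $\partial_{t}\tilde\varphi$ genuinely behaves like $t^{2s-1}$ near $t=0$ when $s<\frac12$, so no $t$-uniform gradient bound can hold.

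The proof of Lemma A.6 in \cite{gln-19} (which the present paper only cites) avoids pointwise gradient bounds altogether. After the same Fubini reduction, the annular energy is controlled by a Caccioppoli-type identity: testing $\mathrm{div}(t^{1-2s}\nabla\tilde\varphi)=0$ against $\eta^{2}\tilde\varphi$, with $\eta$ a cutoff equal to $1$ on $\mathcal{B}_{5\delta}^{+}\setminus\mathcal{B}_{2\delta}^{+}$ and supported in a slightly larger annulus, gives
$\int t^{1-2s}\eta^{2}|\nabla\tilde\varphi|^{2}\le C\int t^{1-2s}|\nabla\eta|^{2}\tilde\varphi^{2}
+\big|\int_{\{t=0\}}\eta^{2}\varphi\,\big(-\lim_{t\to0}t^{1-2s}\partial_{t}\tilde\varphi\big)\,dy\big|$.
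The volume term is estimated with the pointwise bound on $\tilde\varphi$ \emph{itself} (Lemma \ref{lembb7}, which does follow legitimately from putting absolute values inside the Poisson integral, since $\|\mathcal{P}_{s}(\cdot,t)\|_{L^{1}}=1$), and the boundary term uses the Neumann data supplied by the equation \eqref{2.3} satisfied by $\varphi$, evaluated in the region $|y-x_{j}|\ge c\,\delta$ where every term carries the $\lambda^{-\tau}$ decay. To repair your argument, replace the pointwise gradient estimate by this energy identity.
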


From the proof of Lemma A.6 in \cite{gln-19}, we have
\begin{lem}\label{lembb7}
There exists a positive constant C such that
\begin{equation}\label{lem-b7}
|\tilde{\varphi}(y,t)|\leq \frac{C\|\varphi\|_{*}}{\lambda^{\tau}}\sum_{j=1}^{m}\frac{1}{(1+|y-x_{j}|)^{\frac{N-2s}{2}+\tau}}.
\end{equation}
\end{lem}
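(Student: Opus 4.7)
The plan is to use the Poisson representation
$$\tilde{\varphi}(y,t)=\int_{\R^N}\mathcal{P}_s(y-\xi,t)\,\varphi(\xi)\,d\xi,\qquad \mathcal{P}_s(y,t)=\beta(N,s)\,\frac{t^{2s}}{(|y|^2+t^2)^{(N+2s)/2}},$$
combined with the pointwise bound $|\varphi(\xi)|\le \|\varphi\|_{*}\lambda^{(N-2s)/2}\sum_{j=1}^{m}(1+\lambda|\xi-x_j|)^{-\alpha}$ that follows directly from the definition \eqref{2.1}, where $\alpha:=(N-2s)/2+\tau$. Exchanging sum and integral reduces the task to estimating, for each $j$, the single-bump integral
$$I_j(y,t):=\int_{\R^N}\mathcal{P}_s(y-\xi,t)\,(1+\lambda|\xi-x_j|)^{-\alpha}\,d\xi.$$

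Next I would exploit the scaling relation $\mathcal{P}_s(y,t)=\lambda^{N}\mathcal{P}_s(\lambda y,\lambda t)$. Setting $\eta=\lambda(\xi-x_j)$, $Y=\lambda(y-x_j)$, $T=\lambda t$ turns $I_j$ into the dimension-free integral $F(Y,T):=\int\mathcal{P}_s(Y-\eta,T)(1+|\eta|)^{-\alpha}\,d\eta$, i.e., the $s$-harmonic extension of $(1+|\cdot|)^{-\alpha}$ to the upper half-space. Since $N\ge 4$ and $0<s<1$ forces $\alpha<N$, I can bound $F(Y,T)\le C(1+|Y|+T)^{-\alpha}$ by splitting the integration into $\{|\eta|\ge (1+|Y|+T)/2\}$ versus its complement: on the first region use $(1+|\eta|)^{-\alpha}\le C(1+|Y|+T)^{-\alpha}$ together with $\int\mathcal{P}_s(Y-\eta,T)\,d\eta=1$, and on the second use $|Y-\eta|\ge (1+|Y|+T)/2-|\eta|$ with the explicit form of $\mathcal{P}_s$ and a direct computation that uses $\alpha<N$ for absolute convergence. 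Unwinding the change of variables yields
$$|\tilde{\varphi}(y,t)|\le C\|\varphi\|_{*}\lambda^{(N-2s)/2}\sum_{j=1}^{m}(1+\lambda|y-x_j|+\lambda t)^{-\alpha}.$$

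The last step is to convert this into the form stated in Lemma \ref{lembb7}. In the relevant region (the estimate is applied on $\partial''\mathcal{B}_{\rho}^{+}(y_0)$ for fixed small $\rho$, see \eqref{l3-8}--\eqref{l3-11}), the identity $|y-y_0|^2+t^2=\rho^2$ forces either $t\ge c\rho$ or $|y-y_0|\ge c\rho$; combined with the fact that the $x_j$ lie close to $y_0$ (so $|y-x_j|\approx|y-y_0|$ for $j=1$ and $|y-x_j|\approx 1$ otherwise), a case analysis delivers the comparison $1+\lambda|y-x_j|+\lambda t\ge c\lambda\bigl(1+|y-x_j|\bigr)$. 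Inserting this converts the prefactor $\lambda^{(N-2s)/2}$ into $\lambda^{-\tau}$ (using $\alpha=(N-2s)/2+\tau$) and produces the claimed bound. The main obstacle is the intermediate decay estimate $F(Y,T)\le C(1+|Y|+T)^{-\alpha}$ for the $s$-harmonic extension of a slowly decaying boundary datum; the regime $T\gg|Y|$, where the extension is smeared in the $t$-direction, is the delicate one, and it is precisely the assumption $N\ge 4$ that guarantees $\alpha<N$ and hence convergence of the relevant tail integral. Once this decay bound is established, the remaining manipulations are routine bookkeeping.
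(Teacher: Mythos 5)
Your first four steps are sound: the Poisson representation, the pointwise bound $|\varphi(\xi)|\le\|\varphi\|_*\lambda^{\frac{N-2s}{2}}\sum_j(1+\lambda|\xi-x_j|)^{-\alpha}$ with $\alpha=\frac{N-2s}{2}+\tau$, the rescaling to $F(Y,T)$, and the decay estimate $F(Y,T)\le C(1+|Y|+T)^{-\alpha}$ are all correct (although $\alpha<N$ follows from $\tau<1$ for every $N\ge 2$ and has nothing to do with $N\ge 4$), and they give the intermediate bound $|\tilde\varphi(y,t)|\le C\|\varphi\|_*\lambda^{\frac{N-2s}{2}}\sum_j\bigl(1+\lambda|y-x_j|+\lambda t\bigr)^{-\alpha}$. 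The paper offers no proof of Lemma \ref{lembb7} beyond citing the proof of Lemma A.6 in \cite{gln-19}, which proceeds along exactly this route, so up to here you are on the intended track.

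The gap is in the last step, which is not ``routine bookkeeping.'' Your geometric premise is wrong: the $x_j$ are $m$ equally spaced points on the circle $\{|y'|=\bar r,\ y''=\bar y''\}$ with $\bar r\approx r_0$, so on the order of $\rho m$ of them lie inside $B_{2\rho}(y_0)$, consecutive ones are only $O(1/m)$ apart, and $|y-x_j|$ for $j\ne 1$ ranges over $[O(1/m),O(1)]$ rather than being $\approx 1$. More seriously, since $|y-x_j|=O(1)$ on $\partial''\mathcal B^+_\rho(y_0)$, your comparison $1+\lambda|y-x_j|+\lambda t\ge c\lambda(1+|y-x_j|)$ is equivalent to $t+|y-x_j|\ge c>0$, i.e.\ to $(y,t)$ keeping a fixed distance from $(x_j,0)$. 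This fails on $\partial''\mathcal B^+_\rho(y_0)$: the values $|x_j-y_0|$ sweep through $\rho$ in increments of size $O(1/m)$, so for a generic $\rho$ some $x_{j_0}$ satisfies $\bigl||x_{j_0}-y_0|-\rho\bigr|=O(1/m)$ and the cap $\partial''\mathcal B^+_\rho(y_0)$ comes within $O(1/m)$ of $(x_{j_0},0)$. At such a point the $j_0$-th term of your (sharp) intermediate bound is of size $\|\varphi\|_*\lambda^{\frac{N-2s}{2}}(\lambda/m)^{-\alpha}=\|\varphi\|_*\lambda^{\frac{N-2s}{2}-1-\frac{2\tau}{N-2s}}$, and the exponent is strictly positive whenever $N-2s>2$, i.e.\ always in this paper; meanwhile the \emph{entire} right-hand side of \eqref{lem-b7} at that point is only $O(\|\varphi\|_*)$, because $\sum_j(1+|y-x_j|)^{-\alpha}=O(m)=O(\lambda^{\tau})$. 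So the term-by-term conversion breaks down precisely for the indices $j$ with $|x_j-y_0|$ comparable to $\rho$, and one must either exploit the freedom $\rho\in(2\delta,5\delta)$ together with an averaging argument (as in Lemma \ref{lembb6}) to keep the surface away from the points $(x_j,0)$, or replace the pointwise claim near those points by integrated estimates. That is where the actual content of the lemma lies, and your argument does not supply it.
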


Let us recall that
$$
Z_{\bar{r},\bar{y}'',\lambda}(y)=\sum_{j=1}^{m} U_{x_{j},\lambda}=(4^s\gamma)^{\frac{N-2s}{4s}}\sum_{j=1}^{m}
\Big(\frac{\lambda}{1+\lambda^{2}|y-x_{j}|^{2}}\Big)^{\frac{N-2s}{2}}.
$$

\begin{lem}\label{lemb3}
 There is a small constant $\sigma>0$,
such that
$$
\int_{\R^{N}}
\frac{1}{|y-z|^{N-2s}}Z^{\frac{4s}{N-2s}+\epsilon}_{\bar r,\bar{y}'',\lambda}(z)\sum_{j=1}^{m}\frac{1}{(1+\lambda|z-x_{j}|)^{\frac{N-2s}{2}+\tau}}dz
\leq
\sum_{j=1}^{m}\frac{C}{(1+\lambda|y-x_{j}|)^{\frac{N-2s}{2}+\tau+\sigma}}.
$$
\end{lem}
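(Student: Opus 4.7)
The plan is to reduce this iterated convolution to the single-center estimate of Lemma~\ref{lemb2}, with inter-bubble interactions handled by Lemma~\ref{lemb1}. First, using Remark~\ref{re1} to absorb the factor $\lambda^{\frac{N-2s}{2}\epsilon}\le C$, I would derive a pointwise bound of the form
\[
Z_{\bar r,\bar y'',\lambda}^{\frac{4s}{N-2s}+\epsilon}(z)\ \le\ C\,\lambda^{2s}\sum_{j=1}^{m}\frac{1}{(1+\lambda|z-x_{j}|)^{4s-\sigma_{0}}}
\]
for a small $\sigma_{0}>0$ at my disposal. When $\tfrac{4s}{N-2s}+\epsilon\le 1$ this follows from the elementary subadditivity $(\sum a_{j})^{q}\le\sum a_{j}^{q}$; in the remaining regime I would use Jensen's inequality in the form $(\sum a_{j})^{q}\le m^{q-1}\sum a_{j}^{q}$ and absorb the extra factor $m^{q-1}$ into the prefactor $\lambda^{2s}$ via $m/\lambda^{2/(N-2s)}=o(1)$.

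Substituting this into the left-hand side and expanding the product of the two sums, it suffices to bound, for every $j,k\in\{1,\dots,m\}$,
\[
\lambda^{2s}\int_{\R^{N}}\frac{dz}{|y-z|^{N-2s}\,(1+\lambda|z-x_{j}|)^{4s-\sigma_{0}}\,(1+\lambda|z-x_{k}|)^{\frac{N-2s}{2}+\tau}}.
\]
For the diagonal terms $j=k$, after the change of variables $w=\lambda(z-x_{j})$ and an application of Lemma~\ref{lemb2} with $\delta=2s-\sigma_{0}+\tfrac{N-2s}{2}+\tau$ (which lies in $(0,N-2s)$ thanks to the condition $\tau<2s$ isolated in Remark~\ref{re-1}), I obtain a bound
\[
\frac{C}{(1+\lambda|y-x_{j}|)^{\frac{N-2s}{2}+\tau+\sigma}},\qquad\sigma:=2s-\sigma_{0}>0,
\]
provided $\sigma_{0}<2s$. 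For the off-diagonal terms $j\ne k$, I would invoke Lemma~\ref{lemb1} with exponents $\alpha=4s-\sigma_{0}$, $\beta=\tfrac{N-2s}{2}+\tau$ and a small auxiliary $\delta'>0$ to split the product into two single-center factors at the cost of a prefactor $|\lambda(x_{j}-x_{k})|^{-\delta'}$; each resulting single-center integral is then handled by Lemma~\ref{lemb2} exactly as in the diagonal step.

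To close the argument I would sum the off-diagonal contributions over $k\ne j$. Because the $x_{j}$ are equispaced on the circle of radius $\bar r$ and $\lambda/m\to\infty$ under the scaling $\lambda\sim\epsilon^{-1/(N-2s)}$, $m\sim\epsilon^{-(N-2s-2)/(N-2s)^{2}}$, the series $\sum_{k\ne j}|\lambda(x_{j}-x_{k})|^{-\delta'}$ is uniformly bounded (in fact tends to zero), so the off-diagonal contributions are absorbed into the diagonal bound and the conclusion of the lemma follows.

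The main obstacle will be purely bookkeeping: the two small parameters $\sigma_{0}$ and $\delta'$ must be chosen so that every exponent produced along the chain of applications of Lemmas~\ref{lemb1}--\ref{lemb2} lies strictly inside the admissible range $(0,N-2s)$, while still delivering a strictly positive net gain $\sigma$. The flexibility to do this is precisely afforded by the hypothesis $2s>\tau$ from Remark~\ref{re-1}; without it, the diagonal step would produce no strict decay improvement over $(1+\lambda|y-x_{j}|)^{-((N-2s)/2+\tau)}$.
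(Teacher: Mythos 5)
Your overall route --- reduce everything to the single-center convolution estimate of Lemma~\ref{lemb2}, treat cross terms with Lemma~\ref{lemb1}, and sum using the spacing of the $x_j$ --- is the same as the paper's, but two steps fail as written. First, your pointwise bound $Z_{\bar r,\bar y'',\lambda}^{\frac{4s}{N-2s}+\epsilon}(z)\le C\lambda^{2s}\sum_j(1+\lambda|z-x_j|)^{-(4s-\sigma_0)}$ is not justified when $q:=\frac{4s}{N-2s}+\epsilon>1$ (which happens for $N=4$, $s>2/3$ and $N=5$, $s>5/6$): Jensen produces the factor $m^{q-1}$, which is a constant in $z$ tending to infinity, and it cannot be ``absorbed into the prefactor'' --- the inequality $m^{q-1}(1+\lambda|z-x_j|)^{-(4s+(N-2s)\epsilon)}\le C(1+\lambda|z-x_j|)^{-(4s-\sigma_0)}$ fails near $z=x_j$, where both weights are $O(1)$. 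The intermediate bound itself is probably true, but proving it requires isolating the nearest bubble on each sector $\Omega_i$ or, as the paper does, a discrete H\"older splitting that keeps one full-strength factor $(1+\lambda|z-x_i|)^{-(N-2s)q}$ and charges the remaining bubbles to sums of the form $\sum_{j\ne i}(\lambda|x_i-x_j|)^{-\gamma}$; this is the real content of the step and it is missing.

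Second, your off-diagonal summation is false for ``small'' $\delta'$: since $|x_j-x_k|\sim \bar r|j-k|/m$ and $m\sim\lambda^{\tau}$, one has $\sum_{k\ne j}|\lambda(x_j-x_k)|^{-\delta'}\sim(m/\lambda)^{\delta'}\sum_{l\le m/2}l^{-\delta'}\sim m\lambda^{-\delta'}\sim\lambda^{\tau-\delta'}$ for $\delta'<1$, which blows up unless $\delta'\ge\tau$. So $\delta'$ is not freely at your disposal; it must be at least $\tau$ (the paper takes $2\tau$), and you must then re-check that the residual exponent $4s-\sigma_0+\frac{N-2s}{2}+\tau-\delta'-2s$ still exceeds $\frac{N-2s}{2}+\tau$, i.e.\ $\sigma_0+\delta'<2s$ --- this is precisely where the hypothesis $\tau<2s$ of Remark~\ref{re-1} is actually consumed. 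Relatedly, your claim that $\delta=2s-\sigma_0+\frac{N-2s}{2}+\tau<N-2s$ ``thanks to $\tau<2s$'' is incorrect: the condition is $\sigma_0>2s+\tau-\frac{N-2s}{2}$, which for $N=4$ and $s$ near $1$ forces $\sigma_0$ close to $2s$ rather than small. That last point is harmless (enlarging $\sigma_0$ only weakens the intermediate bound), but it interacts with the constraint $\sigma_0+\delta'<2s$ and must be tracked before one can assert a positive net gain $\sigma$.
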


\begin{proof}
Here we prove it by some different arguments from Lemma B.3 of \cite{WY1} and Lemma 2.2 of \cite{GN-16-DCDS}.
By direct computations and Remark \ref{re1}, we have
\begin{align*}
&\int_{\R^{N}} \frac{1}{|y-z|^{N-2s}}Z^{\frac{4s}{N-2s}+\epsilon}_{\bar r,\bar{y}'',\lambda}(z)\sum_{j=1}^{m}\frac{1}{(1+\lambda|z-x_{j}|)^{\frac{N-2s}{2}+\tau}}dz\\
\leq \,&
C\int_{\R^{N}} \frac{\lambda^{2s+\frac{N-2s}{2}\epsilon}}{|y-z|^{N-2s}}\Big(\sum_{i=1}^{m}\frac{1}{(1+\lambda|z-x_{i}|)^{N-2s}}
\Big)^{\frac{4s}{N-2s}+\epsilon}\sum_{j=1}^{m}\frac{1}{(1+\lambda|z-x_{j}|)^{\frac{N-2s}{2}+\tau}}\\
\leq\, &
C\sum_{i=1}^{m}\int_{\R^{N}} \frac{\lambda^{2s}}{|y-z|^{N-2s}}\frac{1}{(1+\lambda|z-x_{i}|)^{\frac{N+6s}{2}+\tau+(N-2s)\epsilon}}
\\
&+\sum_{i=1}^{m}\int_{\R^{N}} \frac{C\lambda^{2s}}{|y-z|^{N-2s}}
\Big(\sum_{j\neq i,j=1}^{m}\frac{1}{(1+\lambda|z-x_{i}|)^{N-2s}}
\frac{1}{(1+\lambda|z-x_{j}|)^{(\frac{N-2s}{2}+\tau)\frac{N-2s}{4s+(N-2s)\epsilon}}}
\Big)^{\frac{4s}{N-2s}+\epsilon}\\
\leq\, &\sum_{j=1}^{m}\frac{1}{(1+\lambda|y-x_{j}|)^{\frac{N-2s}{2}+\tau+\sigma}},
\end{align*}
since it follow from Lemma \ref{lemb2} that
\begin{align*}
&\sum_{i=1}^{m}\int_{\R^{N}} \frac{\lambda^{2s}}{|y-z|^{N-2s}}\frac{1}{(1+\lambda|z-x_{i}|)^{\frac{N+6s}{2}+\tau+(N-2s)\epsilon}}\, dz\\
\leq\,&\sum_{i=1}^{m}\int_{\R^{N}} \frac{\lambda^{2s}}{|y-z|^{N-2s}}\frac{1}{(1+\lambda|z-x_{i}|)^{2s+(\frac{N-2s}{2}+\tau+2s-(N-2s)\epsilon)}}\, dz\\
\leq\, &\sum_{j=1}^{m}\frac{1}{(1+\lambda|y-x_{j}|)^{\frac{N-2s}{2}+\tau+\sigma}},
\end{align*}
and
\begin{align*}
&\sum_{i=1}^{m}\int_{\R^{N}} \frac{\lambda^{2s}}{|y-z|^{N-2s}}
\Big(\sum_{j\neq i,j=1}^{m}\frac{1}{(1+\lambda|z-x_{i}|)^{N-2s}}
\frac{1}{(1+\lambda|z-x_{j}|)^{(\frac{N-2s}{2}+\tau)\frac{N-2s}{4s+(N-2s)\epsilon}}}
\Big)^{\frac{4s}{N-2s}+\epsilon}\, dz\\
\leq& \sum_{i=1}^{m}\int_{\R^{N}} \frac{\lambda^{2s}}{|y-z|^{N-2s}}
\sum_{j\neq i,j=1}^{m}\frac{1}{|\lambda(x_{j}-x_{i})|^{2\tau}}
\Big(\frac{1}{(1+\lambda|z-x_{i}|)^{N-2s+(\frac{N-2s}{2}+\tau)\frac{N-2s}{4s+(N-2s)\epsilon}-2\tau}}
\\
&\quad\quad+\frac{1}{(1+\lambda|z-x_{j}|)^{N-2s+(\frac{N-2s}{2}+\tau)\frac{N-2s}{4s+(N-2s)\epsilon}-2\tau}}
\Big)^{\frac{4s}{N-2s}+\epsilon}\, dz
\\
\leq& C\sum_{i=1}^{m}\int \frac{\lambda^{2s}}{|y-z|^{N-2s}}
\frac{1}{(1+\lambda|z-x_{i}|)^{2s+(\frac{N-2s}{2}+\tau)+2s-\tau\frac{4s}{N-2s}+(N-2s-\tau)\epsilon}} \, dz
\\
\leq &C\sum_{j=1}^{m}\frac{1}{(1+\lambda|y-x_{j}|)^{\frac{N-2s}{2}+\tau+\sigma}},
\end{align*}
where we used $2s-\tau\frac{4s}{N-2s}>0.$
\end{proof}

\begin{lem} \label{lema.2}
If $N\geq 4$ and $0<s<1$, then
$$
\frac{\partial I(Z_{\bar r,\bar{y}'',\lambda})}{\partial
\lambda}=m\Big(-\frac{B_{1}}{\lambda^{3}}
+\sum_{j=2}^{m}\frac{B_{2}}{\lambda^{N-2s+1}|x_{1}-x_{j}|^{N-2s}}
+O\Big(\epsilon^{\frac{3+\iota}{N-2s}}\Big)\Big),
$$
where   $B_{j},j=1,2$ are some positive constants.
\end{lem}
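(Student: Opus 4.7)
\medskip

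The plan is to differentiate $I(Z_{\bar r,\bar y'',\lambda})$ in $\lambda$ and exploit the $\mathbb{Z}_m$-rotational symmetry. Since $\partial_\lambda Z_{\bar r,\bar y'',\lambda}=\sum_{j=1}^{m}Z_{j,1}$ and every inner product $\langle I'(Z),Z_{j,1}\rangle$ is independent of $j$,
$$
\frac{\partial I(Z_{\bar r,\bar y'',\lambda})}{\partial \lambda}
\,=\, m\int_{\R^{N}}\bigl[(-\Delta)^{s}Z-K(y)\,Z^{2^{*}_{s}-1+\epsilon}\bigr]Z_{1,1}\,dy.
$$
Using $(-\Delta)^{s}U_{x_j,\lambda}=U_{x_j,\lambda}^{2^{*}_{s}-1}$, I split the bracket as $A+B+C$ with
$A=\sum_{j}U_{x_j,\lambda}^{2^{*}_{s}-1}-Z^{2^{*}_{s}-1}$ (pure interaction),
$B=(1-K)Z^{2^{*}_{s}-1}$ (potential deviation), and
$C=-K\bigl(Z^{2^{*}_{s}-1+\epsilon}-Z^{2^{*}_{s}-1}\bigr)$ ($\epsilon$-correction), and estimate each piece paired with $Z_{1,1}$.

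For $A$, by symmetry it suffices to work in the region $\Omega_1$ where $U_{x_1,\lambda}$ dominates. Taylor-expanding $(\sum_{j}U_{x_j,\lambda})^{2^{*}_{s}-1}$ around $U_{x_1,\lambda}$, the leading part is $-(2^{*}_{s}-1)U_{x_1,\lambda}^{2^{*}_{s}-2}\sum_{j\ge 2}U_{x_j,\lambda}$. The scaling $y=x_1+z/\lambda$ and the asymptotic $U_{x_j,\lambda}(y)\sim \lambda^{-(N-2s)/2}|x_1-x_j|^{-(N-2s)}$ near $x_1$ produce the term $\sum_{j\ge 2}B_2\lambda^{-(N-2s+1)}|x_1-x_j|^{-(N-2s)}$ with $B_2>0$; higher-order terms of the expansion (including $U_{x_1,\lambda}^{2^{*}_{s}-3}U_{x_j,\lambda}^{2}$ when $2^{*}_{s}\ge 3$, and cross contributions involving three or more bubbles) are bounded by $\epsilon^{(3+\iota)/(N-2s)}$ after invoking $m\sim\epsilon^{-(N-2s-2)/(N-2s)^{2}}$ and the discrete H\"older estimates used earlier in the paper.

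For $B$, Taylor-expand $K$ at $y_0$. Since $(K_{1})$ gives $K(y_0)=1$ and $\nabla K(y_0)=0$, and $|x_1-y_0|\le\epsilon^{(1+\iota)/(N-2s)}$, the substitution $y=x_1+z/\lambda$ collapses the constant and linear terms into the error, and radial symmetry kills cross-derivatives. What survives is the Hessian contribution
$$
-\frac{1}{2N\lambda^{3}}\,\Delta K(y_0)\int_{\R^{N}}|z|^{2}\bar U(z)^{2^{*}_{s}-1}\psi(z)\,dz+o\!\bigl(\lambda^{-3}\bigr),
$$
where $\bar U(z)=C_{N,s}(1+|z|^{2})^{-(N-2s)/2}$ and $\psi(z)=\tfrac{N-2s}{2}\bar U(z)+z\cdot\nabla\bar U(z)$ is the rescaled $\partial_\lambda U$. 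Assumption $(K_{2})$ gives $\Delta K(y_0)<0$, which together with the (classical) sign of $\int |z|^{2}\bar U^{2^{*}_{s}-1}\psi\,dz$ yields the $-B_1/\lambda^{3}$ term with $B_1>0$. For $C$, the bound $|Z^{\epsilon}-1|\lesssim \epsilon|\ln Z|$ gives
$
|C|\lesssim \epsilon\,|\ln\lambda|/\lambda\sim \epsilon^{1+1/(N-2s)}|\ln\epsilon|,
$
which is absorbed in $\epsilon^{(3+\iota)/(N-2s)}$ since $N-2s>2$ for $N\ge 4$, $s<1$.

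The main obstacle will be the precise extraction of the Hessian constant $B_1$ (tracking the profile $\bar U$, its derivative $\psi$, and the Taylor expansion of $K$ simultaneously) and, for $A$, proving that each pair $(x_1,x_j)$ factorizes into the exact form $|x_1-x_j|^{-(N-2s)}$ up to $o(1)$ uniformly in $m$. Both issues are ultimately controlled by the restriction on $s$ in the statement, which ensures that the decay exponents lining up in Lemmas \ref{lemb1}--\ref{lemb3} stay in the admissible range.
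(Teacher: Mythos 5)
Your proposal is correct and follows essentially the same route as the paper: the paper's proof of Lemma \ref{lema.2} decomposes $\partial_\lambda I(Z_{\bar r,\bar y'',\lambda})$ into exactly your three pieces (interaction $F_1$, potential deviation $F_2$, $\epsilon$-correction $F_3$, up to a trivial rearrangement of where the factors $K$ and $Z^{\epsilon}$ sit), extracts $B_2$ from the leading cross term $(2^{*}_{s}-1)U_{x_1,\lambda}^{2^{*}_{s}-2}\sum_{j\ge2}U_{x_j,\lambda}\,\partial_\lambda U_{x_1,\lambda}$ on $\Omega_1$, obtains $-B_1/\lambda^{3}$ from the Hessian of $K$ at $y_0$ via $(K_1)$, $(K_2)$ and \eqref{r}, and absorbs the $\epsilon\ln(1/\epsilon)$ term using $N>2s+2$. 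No gaps worth flagging.
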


\begin{proof}

Direct calculations show that
\begin{align*}
\ds\frac{\partial I(Z_{\bar r,\bar{y}'',\lambda})}{\partial\lambda}
=\,&\int_{\R^N}
\sum_{j=1}^mU_{x_j,\lambda}^{2_{s}^{*}-1}\frac{\partial Z_{\bar r,\bar{y}'',\lambda}}{\partial\lambda}dy
-\ds\int_{\R^N} K(y) (Z_{\bar r,\bar{y}'',\lambda})^{2_{s}^{*}-1+\epsilon}\frac{\partial Z_{\bar r,\bar{y}'',\lambda}}{\partial\lambda}dy
\\
=\,&-\int_{\R^N}\Big[(Z_{\bar r,\bar{y}'',\lambda})^{2_{s}^{*}-1}-
\sum_{j=1}^mU_{x_j,\lambda}^{2_{s}^{*}-1}\Big]\frac{\partial Z_{\bar r,\bar{y}'',\lambda}}{\partial\lambda}dy
\\
&+\int_{\R^N}\Big[1-K(y)\Big] (Z_{\bar r,\bar{y}'',\lambda})^{2_{s}^{*}-1+\epsilon}\frac{\partial Z_{\bar r,\bar{y}'',\lambda}}{\partial\lambda}dy
\\
&-\int_{\R^N}\Big[(Z_{\bar r,\bar{y}'',\lambda})^{2_{s}^{*}-1+\epsilon}-(Z_{\bar r,\bar{y}'',\lambda})^{2_{s}^{*}-1}
\Big]\frac{\partial Z_{\bar r,\bar{y}'',\lambda}}{\partial\lambda}dy
\\
:=\,&-F_{1}+F_{2}-F_{3}.
\end{align*}

First, we have
\begin{equation}\label{f1}
\begin{split}
F_{1}=\,&\int_{\R^N} \Bigl( (Z_{\bar r,\bar{y}'',\lambda})^{2_{s}^{*}-1}-\sum_{j=1}^mU_{x_j,\lambda}^{2_{s}^{*}-1}
\Bigr)\frac{\partial Z_{\bar r,\bar{y}'',\lambda}}{\partial\lambda}\,dy\\
=\,& m \int_{\Omega_1} \Bigl( (Z_{\bar r,\bar{y}'',\lambda})^{2_{s}^{*}-1}-\sum_{j=1}^mU_{x_j,\lambda}^{2_{s}^{*}-1}
\Bigr)\frac{\partial Z_{\bar r,\bar{y}'',\lambda}}{\partial\lambda} \,dy\\
=\,& m \Bigl( \int_{\Omega_1} (2_{s}^{*}-1) U_{x_1,\lambda}^{2_{s}^{*}-2}\sum_{j=2}^m U_{x_j,\lambda}\frac{\partial U_{x_1,\lambda}}{\partial\lambda}\,dy
+O\big(\epsilon^{\frac{3+\iota}{N-2s}}\bigr)\Bigr)
\\
=\,& m \Bigl( - \sum_{j=2}^{m}\frac{B_{2}}{\lambda^{N-2s+1}|x_{1}-x_{j}|^{N-2s}}+O\big(\epsilon^{\frac{3+\iota}{N-2s}}\bigr)\Bigr)
\end{split}
\end{equation}
for some constant $B_2>0$.

Noting that $K(y_{0})=1,$ it is easy to check that
\begin{align}\label{qq}
F_{2}=&\int_{\R^N}\big(1-K(y)\big)(Z_{\bar r,\bar{y}'',\lambda})^{2_{s}^{*}-1+\epsilon}\frac{\partial Z_{\bar r,\bar{y}'',\lambda}}{\partial\lambda} \,dy\nonumber\\
=&
m\Big(\int_{\R^N}\big(1-K(y)\big) U^{2_{s}^{*}-1+\epsilon}_{x_1,\lambda}\frac{\partial  U_{x_1,\lambda}}{\partial \lambda} dy +O\Big(\frac1\lambda \int_{\R^N}
U^{2_{s}^{*}-1+\epsilon}_{x_1,\lambda}\sum\limits_{j=2}^m
U_{x_j,\lambda}dy\Big)\Big)\nonumber
\\
=&
m\Big(-\int_{B_{\epsilon^{\frac{\frac{1}{2}+\iota}{N-2s}}}(y_{0})}
\Big(\sum_{i,j=1}^{N}
\frac{1}{2}\frac{\partial ^{2}K(y_{0})}{\partial y_{i}\partial y_{j}}(y_{i} -y_{0i})(y_{j} -y_{0j})\nonumber
\\
&\quad\quad\quad\quad+
\ds\sum_{i,j,k=1}^{N}\frac{1}{6}
\frac{\partial ^{3}K(y_{0}+\vartheta(y-y_{0}))}{\partial y_{i}\partial y_{j}\partial y_{k}}(y_{i} -y_{0i})
(y_{j} -y_{0j})(y_{k} -y_{0k})\Big)\frac{1}{2_{s}^{*}+\epsilon}\frac{\partial  U^{2_{s}^{*}+\epsilon}_{x_1,\lambda}}{\partial \lambda}dy\nonumber
\\
&\quad-\int_{B^{C}_{\epsilon^{\frac{\frac{1}{2}+\iota}{N-2s}}}(y_{0})}\big(K(y)-1\big) \frac{1}{2_{s}^{*}+\epsilon}\frac{\partial  U^{2_{s}^{*}+\epsilon}_{x_1,\lambda}}{\partial \lambda}dy
+O\big(\epsilon^{\frac{3+\iota}{N-2s}}\big)\Big)\nonumber
\\
=&m\Big(-\int_{B_{\frac{3}{2}\epsilon^{\frac{\frac{1}{2}+\iota}{N-2s}}}(x_{1})}
\Big(\sum_{i,j=1}^{N}
\frac{1}{2}\frac{\partial ^{2}K(y_{0})}{\partial y_{i}\partial y_{j}}(y_{i}-y_{0i})(y_{j}-y_{0j})+O(\|y-y_{0}\|^{3})\Big)
\frac{1}{2_{s}^{*}+\epsilon}\frac{\partial  U^{2_{s}^{*}+\epsilon}_{x_1,\lambda}}{\partial \lambda}dy \nonumber
\\
&\quad+O\Big(\int_{B^{C}_{\frac{1}{2}\epsilon^{\frac{\frac{1}{2}+\iota}{N-2s}}}(x_{1})}\frac{ U^{2_{s}^{*}+\epsilon}_{x_1,\lambda}}{\lambda}dy\Big)
+O\big(\epsilon^{\frac{3+\iota}{N-2s}}\big)\Big)\nonumber
\\
=&m\Big(-\int_{B_{\frac{3}{2}\epsilon^{\frac{\frac{1}{2}+\iota}{N-2s}}(0)}}
\Big(\sum_{i,j=1}^{N}
\frac{1}{2}\frac{\partial ^{2}K(y_{0})}{\partial y_{i}\partial y_{j}}(z_{i}+x_{1i} -y_{0i})(z_{j}+x_{1j} -y_{0j}) \nonumber
\\
&\quad\quad\quad\quad\quad\quad\quad\quad\quad+O(\|z+x_{1}-y_{0}\|^{3})\Big)
\frac{1}{2_{s}^{*}+\epsilon}\frac{\partial  U^{2_{s}^{*}+\epsilon}_{0,\lambda}}{\partial \lambda}dz
+O\big(\epsilon^{\frac{3+\iota}{N-2s}}\big)\Big)\nonumber
\\
=&\ds m\Big(-\frac{1}{2_{s}^{*}+\epsilon}\frac{\partial }{\partial\lambda}
\int
\sum_{i,j=1}^{N}\frac{1}{2}\frac{\partial ^{2}K(y_{0})}{\partial y_{i}\partial y_{j}}\Big(\frac{z_{i}}{\lambda}+x_{1i}-y_{0i}\Big)
\Big(\frac{z_{j}}{\lambda}+x_{1j}-y_{0j}\Big)U^{2_{s}^{*}+\epsilon}_{0,1}dz \nonumber
\\
&-\int_{B^{C}_{\frac{3}{2}\epsilon^{\frac{\frac{1}{2}+\iota}{N-2s}}}(0)}
\sum_{i,j=1}^{N}\frac{1}{2}\frac{\partial ^{2}K(y_{0})}{\partial y_{i}\partial y_{j}}\Big(z_{i}+x_{1i}-y_{0i}\Big)
\Big(z_{j}+x_{1j}-y_{0j}\Big)\frac{1}{2^{*}}\frac{\partial  U^{2_{s}^{*}+\epsilon}_{0,\lambda}}{\partial \lambda}dz \nonumber
\\
&+\ds\int O(\|\frac{z}{\lambda}+x_{1}-y_{0}\|^{3}) \frac{U^{2_{s}^{*}+\epsilon}_{0,1}}{\lambda}\Big)+O\big(\epsilon^{\frac{3+\iota}{N-2s}}\big)\Big) \nonumber
\\
=&\ds l\Big(-\frac{1}{2_{s}^{*}+\epsilon}\frac{\partial}{\partial \lambda}
\int \frac{1}{2\lambda^{2}}\sum_{i=1}^{N}\frac{\partial^{2}K(y_{0})}{\partial y^{2}_{i}}z^{2}_{i}U^{2_{s}^{*}+\epsilon}_{0,1}dz
+O\Big(\epsilon^{\frac{3+\iota}{N-2s}}\Big)\Big) \nonumber
\\
=&\ds l\Big(\frac{1}{2_{s}^{*}+\epsilon}\frac{1 }{\lambda^{3}}
\int\sum_{i=1}^{N}\frac{\partial ^{2}K}{\partial y^{2}_{i}}(y_{0})
z^{2}_{i}U^{2_{s}^{*}+\epsilon}_{0,1}dz+O\big(\epsilon^{\frac{3+\iota}{N-2s}}\big)\Big) \nonumber
\\
=&\ds l\Big(\frac{1}{2_{s}^{*}+\epsilon}\frac{1 }{\lambda^{3}}\frac{\Delta K(y_{0})}{N}
\int z^{2}U^{2_{s}^{*}+\epsilon}_{0,1}dz+O\Big(\epsilon^{\frac{3+\iota}{N-2s}}\Big)\Big)
=
l\Big(-\frac{B_{1}}{\lambda^{3}}+O\big(\epsilon^{\frac{3+\iota}{N-2s}}\big)\Big),
\end{align}
where we have used \eqref{r} and \eqref{q1}.

Finally similar to \eqref{j11}, since $N>2s+2,$ we have
\begin{align}\label{f2}
F_{3}&=\int_{\R^N}\Big[(Z_{\bar r,\bar{y}'',\lambda})^{2_{s}^{*}-1+\epsilon}-(Z_{\bar r,\bar{y}'',\lambda})^{2_{s}^{*}-1}
\Big]\frac{\partial Z_{\bar r,\bar{y}'',\lambda}}{\partial\lambda} \,dy \nonumber
\\
&\leq \frac{C}{\lambda}\epsilon \ln\frac{1}{\epsilon}  \int_{\R^N}\sum_{j=1}^{m}\frac{\lambda^{\frac{N+2s}{2}}}{(1+\lambda|y-x_{j}|)^{\frac{N+2s}{2}+\tau}}\frac{\lambda^{\frac{N-2s}{2}}}{\lambda(1+\lambda|y-x_{j}|)^{N-2s}} \,dy\nonumber
\\
& \leq  C\epsilon^{\frac{2+\iota}{N-2s}} \int_{\R^N}\sum_{j=1}^{m}\frac{\lambda^{\frac{N+2s}{2}}}{(1+\lambda|y-x_{j}|)^{\frac{N+2s}{2}+\tau}}
\frac{\lambda^{\frac{N-2s}{2}}}{(1+\lambda|y-x_{j}|)^{N-2s}} \,dy\nonumber
\\
& \leq  Cm\epsilon^{\frac{3+\iota}{N-2s}}.
\end{align}

So, we obtain
\begin{eqnarray*}
\frac{\partial I(Z_{\bar r,\bar{y}'',\lambda})}{\partial
\lambda}=m\Big(-\frac{B_{1}}{\lambda^{3}}
+\sum_{j=2}^{m}\frac{B_{2}}{\lambda^{N-2s+1}|x_{1}-x_{j}|^{N-2s}}
+O\big(\epsilon^{\frac{3+\iota}{N-2s}}\big)\Big).
\end{eqnarray*}
\end{proof}

\section{{Proof of \eqref{2.9.5}}}\label{sd}
In this section, we mainly prove \eqref{2.9.5}.
\begin{proof}
Note that
\begin{align}\label{2.9.5-1}
&\bigl\langle \ds(-\Delta)^{s} \varphi_\epsilon
-(2_{s}^{*}-1+\epsilon)K(r,y'')Z_{\bar r,\bar{y}'',\lambda}^{2_{s}^{*}-2+\epsilon}
\varphi_\epsilon,Z_{1,k}\bigr\rangle \nonumber
\\[1mm]
=\,&\big\langle(-\Delta)^{s} Z_{1,k}, \varphi_\epsilon\big\rangle
-(2_{s}^{*}-1+\epsilon)\big\langle K(r,y'')Z_{\bar r,\bar{y}'',\lambda}^{2_{s}^{*}-2+\epsilon}
\varphi_\epsilon,Z_{1,k}\bigr\rangle \nonumber
\\[1mm]
=\,&(2_{s}^{*}-1)\big\langle U_{x_{1},\lambda}^{2_{s}^{*}-2}Z_{1,k},\varphi_\epsilon\big\rangle
-(2_{s}^{*}-1+\epsilon)\big\langle K(r,y'')Z_{\bar r,\bar{y}'',\lambda}^{2_{s}^{*}-2+\epsilon}
\varphi_\epsilon,Z_{1,k}\bigr\rangle \nonumber
\\[1mm]
=\,&(2_{s}^{*}-1)\big\langle (U_{x_{1},\lambda}^{2_{s}^{*}-2}-Z_{\bar r,\bar{y}'',\lambda}^{2_{s}^{*}-2}
)Z_{1,k},\varphi_\epsilon\big\rangle
-(2_{s}^{*}-1)\big\langle (K(r,y'')-1)Z_{\bar r,\bar{y}'',\lambda}^{2_{s}^{*}-2}Z_{1,k},\varphi_\epsilon\big\rangle\nonumber
\\
&-(2_{s}^{*}-1)\big\langle K(r,y'')(Z_{\bar r,\bar{y}'',\lambda}^{2_{s}^{*}-2+\epsilon}-Z_{\bar r,\bar{y}'',\lambda}^{2_{s}^{*}-2})Z_{1,k},\varphi_\epsilon\big\rangle
-\epsilon \big\langle K(r,y'')Z_{\bar r,\bar{y}'',\lambda}^{2_{s}^{*}-2+\epsilon}Z_{1,k},\varphi_\epsilon\big\rangle \nonumber
\\[1mm]
:=\,&M_{1}-M_{2}-M_{3}-M_{4}.
\end{align}

First, we can rewrite $M_{1}$ as following
\begin{align*}
M_{1}\,=\,&(2_{s}^{*}-1) \int_{\R^N} \big( U_{x_{1},\lambda}^{2_{s}^{*}-2}-Z_{\bar r,\bar{y}'',\lambda}^{2_{s}^{*}-2} \big)\,Z_{1,k}\varphi_\epsilon \,dy
\\
\,=\,& (2_{s}^{*}-1) \int_{\Omega_1} \big( U_{x_{1},\lambda}^{2_{s}^{*}-2}-Z_{\bar r,\bar{y}'',\lambda}^{2_{s}^{*}-2} \big)\,Z_{1,k}\varphi_\epsilon \,dy
+(2_{s}^{*}-1) \int_{\Omega_1^{c}} \big( U_{x_{1},\lambda}^{2_{s}^{*}-2}-Z_{\bar r,\bar{y}'',\lambda}^{2_{s}^{*}-2} \big)\,Z_{1,k}\varphi_\epsilon \,dy
\\
:=\,&M_{11}\,+\,M_{12}.
\end{align*}
Next, we will estimates $M_1$ in the following two cases.

If $2_{s}^{*}>3,$ then $\frac{4s}{N-2s}>1$ and
\begin{align}\label{2.9.5-3}
M_{11}
&\leq C\|\varphi_\epsilon\|_{*}\int_{\Omega_{1}} \Big[\big(\sum_{i=2}^{m}U_{x_{i},\lambda}\big)^{2_{s}^{*}-2}
+U_{x_{1},\lambda}^{2_{s}^{*}-3}\sum_{i=2}^{m}U_{x_{i},\lambda}\Big]|Z_{1,k}|
\sum_{j=1}^{m}\frac{\lambda^{\frac{N-2s}{2}}}{(1+\lambda|y-x_{j}|)^{\frac{N-2s}{2}+\tau}} \nonumber
\\
&\leq C\|\varphi_\epsilon\|_{*}\lambda^{n_{k}}\int_{\Omega_{1}}\big(\sum_{i=2}^{m}\frac{1}{(1+\lambda|y-x_{i}|)^{N-2s}}\big)^{2_{s}^{*}-2}
\frac{\lambda^{N}}{(1+\lambda|y-x_{1}|)^{\frac{3(N-2s)}{2}+\tau}} \nonumber
\\
&\quad+C\|\varphi_\epsilon\|_{*}\lambda^{n_{k}}\int_{\Omega_{1}}\big(\sum_{i=2}^{m}\frac{1}{(1+\lambda|y-x_{i}|)^{N-2s}}\big)^{2_{s}^{*}-2}
\frac{1}{(1+\lambda|y-x_{1}|)^{N-2s}}
\sum_{j=2}^{m}\frac{\lambda^{N}}{(1+\lambda|y-x_{j}|)^{\frac{N-2s}{2}+\tau}} \nonumber
\\
&\quad+C\|\varphi_\epsilon\|_{*}\lambda^{n_{k}}\int_{\Omega_{1}}\frac{1}{(1+\lambda|y-x_{1}|)^{\frac{N+6s}{2}+\tau}}\sum_{j=2}^{m}
\frac{\lambda^N}{(1+\lambda|y-x_{j}|)^{N-2s}} \nonumber
\\
&\quad+C\|\varphi_\epsilon\|_{*}\lambda^{n_{k}}\int_{\Omega_{1}}\frac{1}{(1+\lambda|y-x_{1}|)^{4s}}\sum_{j=2}^{m}
\frac{\lambda^N}{(1+\lambda|y-x_{j}|)^{N-2s}}
\sum_{j=2}^{m}\frac{1}{(1+\lambda|y-x_{j}|)^{\frac{N-2s}{2}+\tau}} \nonumber
\\
&\leq C\|\varphi_\epsilon\|_{*}\lambda^{n_{k}}\int_{\Omega_{1}}\big(\sum_{i=2}^{m}\frac{1}{|\lambda(x_{1}-x_{i})|^{\frac{N-2s}{2}}}\big)^{2_{s}^{*}-2}
\frac{\lambda^{N}}{(1+\lambda|y-x_{1}|)^{\frac{3N-2s}{2}+\tau}} \nonumber
\\
&\quad+C\|\varphi_\epsilon\|_{*}\lambda^{n_{k}}\int_{\Omega_{1}}\big(\sum_{i=2}^{m}\frac{1}{|\lambda(x_{1}-x_{i})|^{\frac{N-2s}{2}}}\big)^{2_{s}^{*}-2}
\frac{1}{(1+\lambda|y-x_{1}|)^{N+\frac{N-2s}{2}}}\sum_{j=2}^{m}\frac{1}{|\lambda(x_{1}-x_{j})|^{\tau}} \nonumber
\\
&\quad+C\|\varphi_\epsilon\|_{*}\lambda^{n_{k}}\int_{\Omega_{1}}\sum_{j=2}^{m}\frac{1}{|\lambda(x_1-x_j)|^{\frac{N-2s}{2}+\tau}}\Big( \frac{\lambda^N}{(1+\lambda|y-x_{1}|)^{N+2s}} +\frac{\lambda^N}{(1+\lambda|y-x_{j}|)^{N+2s}}\Big)  \nonumber
\\
&\quad+C\|\varphi_\epsilon\|_{*}\lambda^{n_{k}}\int_{\Omega_{1}}\frac{1}{(1+\lambda|y-x_{1}|)^{\frac{N-2s}{2}+4s}}\sum_{j=2}^{m}
\frac{\lambda^N}{(1+\lambda|y-x_{j}|)^{N-2s}}
\sum_{j=2}^{m}\frac{1}{|\lambda(x_1-x_{j})|^{\tau}} \nonumber
\\
&\leq C\|\varphi_\epsilon\|_{*}\lambda^{n_{k}}\Big(\frac{m}{\lambda}\Big)^{2s}
\leq C\|\varphi_\epsilon\|_{*}\frac{\lambda^{n_{k}}}{\lambda^{\frac{4s}{N-2s}}}\leq  C\|\varphi_\epsilon\|_{*}\frac{\lambda^{n_{k}}}{\lambda^{1+\iota}}.
\end{align}
In the above, we have used the fact that $N>2+2s$.

Noting that $N>2+2s,\frac{4s}{N-2s}>1$ and $\tau<2s,$ by Lemma \ref{lemb1} and the discrete H\"{o}lder inequality we have
\begin{align}\label{2.9.5-4}
&\int_{\Omega_1^{c} } \big(\sum_{i=2}^{m}U_{x_{i},\lambda}\big)^{2_{s}^{*}-2}
|Z_{1,k}|
\sum_{j=1}^{m}\frac{\lambda^{\frac{N-2s}{2}}}{(1+\lambda|y-x_{j}|)^{\frac{N-2s}{2}+\tau}} \nonumber\\
&\leq C\lambda^{n_{k}}\int_{\Omega_1^{c} }\big(\sum_{i=2}^{m}\frac{1}{(1+\lambda|y-x_{i}|)^{N-2s}}\big)^{2_{s}^{*}-2}
\frac{1}{(1+\lambda|y-x_{1}|)^{N-2s}}
\sum_{j=1}^{m}\frac{\lambda^{N}}{(1+\lambda|y-x_{j}|)^{\frac{N-2s}{2}+\tau}} \nonumber\\
&\leq C\lambda^{n_{k}}\int_{\Omega_1^{c} }\frac{1}{(1+\lambda|y-x_{2}|)^{\frac{N+6s}{2}+\tau}}
\frac{\lambda^{N}}{(1+\lambda|y-x_{1}|)^{N-2s}} \nonumber\\
&\quad+C\lambda^{n_{k}}\int_{\Omega_1^{c} }\frac{1}{(1+\lambda|y-x_{2}|)^{4s}}\frac{1}{(1+\lambda|y-x_{1}|)^{N-2s}}
\sum_{j=1,j\neq 2}^{m}\frac{\lambda^{N}}{(1+\lambda|y-x_{j}|)^{\frac{N-2s}{2}+\tau}} \nonumber\\
&\quad+C\lambda^{n_{k}}\int_{\Omega_1^{c} }\big(\sum_{i=3}^{m}\frac{1}{(1+\lambda|y-x_{i}|)^{N-2s}}\big)^{2_{s}^{*}-2}
\frac{1}{(1+\lambda|y-x_{1}|)^{N-2s}}
\frac{\lambda^{N}}{(1+\lambda|y-x_{2}|)^{\frac{N-2s}{2}+\tau}}\nonumber \\
&\quad+C\lambda^{n_{k}}\int_{\Omega_1^{c} }\big(\sum_{i=3}^{m}\frac{1}{(1+\lambda|y-x_{i}|)^{N-2s}}\big)^{2_{s}^{*}-2}
\frac{1}{(1+\lambda|y-x_{1}|)^{N-2s}}
\sum_{j=1,j\neq 2}^{m}\frac{\lambda^{N}}{(1+\lambda|y-x_{j}|)^{\frac{N-2s}{2}+\tau}} \nonumber
\\
&\leq C\lambda^{n_{k}}\int_{\Omega_1^{c} }\frac{1}{|\lambda(x_{1}-x_{2})|^{\frac{N-2s}{2}+\tau}}
\Big(\frac{\lambda^{N}}{(1+\lambda|y-x_{1}|)^{N+2s}}+\frac{\lambda^{N}}{(1+\lambda|y-x_{2}|)^{N+2s}}\Big) \nonumber
\\
&\quad+C\lambda^{n_{k}}\int_{\Omega_1^{c} }\sum_{j=1,j\neq 2}^{m}\frac{1}{|\lambda(x_{j}-x_{2})|^{\frac{N-2s}{2}+\tau}}
\Big(\frac{\lambda^{N}}{(1+\lambda|y-x_{j}|)^{N+2s}}+\frac{\lambda^{N}}{(1+\lambda|y-x_{2}|)^{N+2s}}\Big) \nonumber
\\
&\quad+C\lambda^{n_{k}}\int_{\Omega_1^{c} }\sum_{i=3}^{m}\frac{1}{|\lambda(x_{i}-x_{2})|^{\frac{N-2s}{2}+\tau}}
\Big(\frac{\lambda^{N}}{(1+\lambda|y-x_{i}|)^{N+2s-\tilde{\sigma}}}
+\frac{\lambda^{N}}{(1+\lambda|y-x_{2}|)^{N+2s-\tilde{\sigma}}}\Big) \nonumber
\\
&\quad+C\lambda^{n_{k}}\int_{\Omega_1^{c} }\sum_{i=3}^{m}\frac{1}{|\lambda(x_{i}-x_{2})|^{\frac{N-2s}{2}+\tau}}\sum_{j=1,j\neq 2}^{m}\frac{1}{|\lambda(x_{j}-x_{2})|^{\tau}}
\Big(\frac{\lambda^{N}}{(1+\lambda|y-x_{i}|)^{N+2s-\tau-\tilde{\sigma}}} \nonumber
\\
&\quad\quad\quad+\frac{\lambda^{N}}{(1+\lambda|y-x_{j}|)^{N+2s-\tau-\tilde{\sigma}}}
+\frac{\lambda^{N}}{(1+\lambda|y-x_{2}|)^{N+2s-\tau-\tilde{\sigma}}}\Big) \nonumber
\\
&\leq C\lambda^{n_{k}}\Big(\frac{m}{\lambda}\Big)^{\frac{N-2s}{2}+\tau}
\leq C\frac{\lambda^{n_{k}}}{\lambda^{1+\frac{2}{N-2s}\tau}}\leq  C\frac{\lambda^{n_{k}}}{\lambda^{1+\iota}},
\end{align}
where $\tilde{\sigma}$ is a small positive constant. Similar to \eqref{2.9.5-4}, we can prove
\begin{equation}\label{2.9.5-5}
\begin{split}
\int_{\Omega_1^{c} } U_{x_{1},\lambda}^{2_{s}^{*}-3}\sum_{i=2}^{m}U_{x_{i},\lambda}|Z_{1,k}|
\sum_{j=1}^{m}\frac{\lambda^{\frac{N-2s}{2}}}{(1+\lambda|y-x_{j}|)^{\frac{N-2s}{2}+\tau}}
\leq  C\frac{\lambda^{n_{k}}}{\lambda^{1+\iota}}.
\end{split}
\end{equation}
From \eqref{2.9.5-3} to \eqref{2.9.5-5}, when $2_{s}^{*}>3,$ we have
\begin{equation}\label{2.9.5-6}
\begin{split}
|M_{1}| \leq  C\|\varphi_{\epsilon}\|_{*}\frac{\lambda^{n_{k}}}{\lambda^{1+\iota}}.
\end{split}
\end{equation}

When $2_{s}^{*}\leq 3,$  we can estimate $M_{1}$ similarly. First of all, we have
\begin{align}\label{2.9.5-5-1}
|M_{11}|
&\leq C\|\varphi_{\epsilon}\|_{*}\lambda^{n_k}\int_{\Omega_{1}} \Big[Z_{\bar r,\bar{y}'',\lambda}^{2_{s}^{*}-2}-U_{x_{1},\lambda}^{2_{s}^{*}-2}\Big]|U_{x_1,\lambda}|
\sum_{j=1}^{m}\frac{\lambda^{\frac{N-2s}{2}}}{(1+\lambda|y-x_{j}|)^{\frac{N-2s}{2}+\tau}} \nonumber
\\
&\leq C\|\varphi_{\epsilon}\|_{*}\lambda^{n_k}\int_{\Omega_{1}} \Big[\Big( U_{x_{1},\lambda}+\sum_{j=2}^{m}U_{x_{j},\lambda} \Big)^{2_{s}^{*}-2}  -U_{x_{1},\lambda}^{2_{s}^{*}-2}\Big]|U_{x_1,\lambda}|
\sum_{j=1}^{m}\frac{\lambda^{\frac{N-2s}{2}}}{(1+\lambda|y-x_{j}|)^{\frac{N-2s}{2}+\tau}} \nonumber
\\
&= C\|\varphi_{\epsilon}\|_{*}\lambda^{n_k}\int_{\Omega_{1}} \Big[U_{x_{1},\lambda}^{2_{s}^{*}-2}\Big( 1+\frac{\sum_{j=2}^{m}U_{x_{j},\lambda}}{U_{x_{1},\lambda}} \Big)^{2_{s}^{*}-2}  -U_{x_{1},\lambda}^{2_{s}^{*}-2}\Big]|U_{x_1,\lambda}|
\sum_{j=1}^{m}\frac{\lambda^{\frac{N-2s}{2}}}{(1+\lambda|y-x_{j}|)^{\frac{N-2s}{2}+\tau}} \nonumber
\\
&\leq C\|\varphi_{\epsilon}\|_{*}\lambda^{n_k}\int_{\Omega_{1}} U_{x_{1},\lambda}^{2_{s}^{*}-3}\sum_{j=2}^{m}U_{x_{j},\lambda}|U_{x_1,\lambda}|
\sum_{j=1}^{m}\frac{\lambda^{\frac{N-2s}{2}}}{(1+\lambda|y-x_{j}|)^{\frac{N-2s}{2}+\tau}} \nonumber
\\
&\leq C\|\varphi_{\epsilon}\|_{*}\lambda^{n_k}\int_{\Omega_{1}} U_{x_{1},\lambda}^{2_{s}^{*}-2}\sum_{j=2}^{m}U_{x_{j},\lambda}
\sum_{j=1}^{m}\frac{\lambda^{\frac{N-2s}{2}}}{(1+\lambda|y-x_{j}|)^{\frac{N-2s}{2}+\tau}} \nonumber
\\
&\leq C\|\varphi_{\epsilon}\|_{*}\lambda^{n_k}\int_{\Omega_{1}} \frac{\lambda^N}{(1+\lambda|y-x_{1}|)^{\frac{N+6s}{2}}}\sum_{j=2}^{m}\frac{1}{(1+\lambda|y-x_j|)^{N-2s}} \nonumber
\\
& \quad + C\|\varphi_{\epsilon}\|_{*}\lambda^{n_k}\int_{\Omega_{1}} \frac{\lambda^N}{(1+\lambda|y-x_{1}|)^{4s}}\sum_{j=2}^{m}\frac{1}{(1+\lambda|y-x_j|)^{N-2s}}
\sum_{j=2}^{m}\frac{1}{(1+\lambda|y-x_{j}|))^{\frac{N-2s}{2}+\tau}} \nonumber
\\
&\leq C\|\varphi_{\epsilon}\|_{*}\lambda^{n_k}\int_{\Omega_{1}} \sum_{j=2}^{m} \frac{1}{(\lambda|x_{1}-x_{j}|)^{\frac{N-2s}{2}}}\Big[\frac{\lambda^N}{(1+\lambda|y-x_{1}|)^{N+2s}}+\frac{\lambda^N}{(1+\lambda|y-x_{i}|)^{N+2s}}\Big] \nonumber
\\
& \quad + C\|\varphi_{\epsilon}\|_{*}\lambda^{n_k}\int_{\Omega_{1}} \frac{\lambda^N}{(1+\lambda|y-x_{1}|)^{\frac{N-2s}{2}+4s}}\sum_{j=2}^{m}\frac{1}{(1+\lambda|y-x_j|)^{N-2s}}
\sum_{j=2}^{m}\frac{1}{(\lambda|x_1-x_{j}|)^{\tau}} \nonumber
\\
&\leq C\|\varphi_{\epsilon}\|_{*}\lambda^{n_{k}}\Big(  \frac{m}{\lambda} \Big)^{\frac{N-2s}{2}}
\leq  C\frac{\lambda^{n_{k}}}{\lambda^{1+\iota}} \|\varphi_{\epsilon}\|_{*}.
\end{align}
Noting that $\tau>2s$ and $|y-x_{1}|\geq |y-x_{i}|,y\in \Omega^{c} _{1},\forall i\neq 1,$  we also have
\begin{align}\label{2.9.5-5-2}
&|M_{12}| \nonumber
\\
&\leq C\|\varphi_{\epsilon}\|_{*}\lambda^{n_k}\int_{\Omega^{c} _{1}} \sum_{i=2}^{m}U^{2^{*}_{s}-2}_{x_i,\lambda} U_{x_1,\lambda}
\sum_{j=1}^{m}\frac{\lambda^{\frac{N-2s}{2}}}{(1+\lambda|y-x_{j}|)^{\frac{N-2s}{2}+\tau}} \nonumber
\\
&\leq C\|\varphi_{\epsilon}\|_{*}\lambda^{n_k}\Big[\int_{\Omega^{c} _{1}}
\sum_{i=2}^{m}\frac{\lambda^{N}}{(1+\lambda|y-x_{i}|)^{4s}}\frac{1}{(1+\lambda|y-x_{1}|)^{\frac{3(N-2s)}{2}+\tau}}\nonumber\\
&\quad+\int_{\Omega^{c} _{1}}
\sum_{i=2}^{m}\frac{\lambda^{N}}{(1+\lambda|y-x_{i}|)^{4s}}\frac{1}{(1+\lambda|y-x_{1}|)^{N-2s}}
\sum_{j=2}^{m}\frac{1}{(1+\lambda|y-x_{j}|)^{\frac{N-2s}{2}+\tau}}\nonumber
\\
&\leq C\|\varphi_{\epsilon}\|_{*}\lambda^{n_k}
\int_{\Omega^{c} _{1}}
\Big[\sum_{i=2}^{m}\frac{1}{|\lambda(x_{i}-x_{1})|^{\frac{N-2s}{2}+\tau}}\frac{\lambda^{N}}{(1+\lambda|y-x_{j}|)^{N+2s}} \nonumber\\
&\quad+
\sum_{i=2}^{m}\frac{1}{|\lambda(x_{i}-x_{j})|^{\frac{N-2s}{2}+\tau}}
\sum_{j=2}^{m}\frac{1}{|\lambda(x_{j}-x_{1})|^{\tau}}
\big(\frac{\lambda^{N}}{(1+\lambda|y-x_{i}|)^{N+2s-\tau}}+\frac{\lambda^{N}}{(1+\lambda|y-x_{j}|)^{N+2s-\tau}}\big)\Big]
\nonumber\\
&\leq C\|\varphi_{\epsilon}\|_{*}\frac{\lambda^{n_k}}{\lambda^{1+\iota}}.
\end{align}

From \eqref{2.9.5-5-1} and \eqref{2.9.5-5-2}, when $2_{s}^{*}\leq 3,$ we have
\begin{equation}
\begin{split}
|M_{1}| \leq  C\|\varphi_{\epsilon}\|_{*}\frac{\lambda^{n_{k}}}{\lambda^{1+\iota}}.
\end{split}
\end{equation}

Similar to \eqref{16-7-2-1} and \eqref{16-7-2}, by the discrete H\"{o}lder inequality we have
\begin{align}\label{2.9.5-7}
|M_{2}| &\leq
C\|\varphi_{\epsilon}\|_{*}\lambda^{n_k}\int_{\R^N} |K(y)-1|\frac{\lambda^{\frac{N-2s}{2}}}{(1+\lambda|y-x_{1}|)^{N-2s}} \nonumber
\\
& \qquad \qquad \qquad \quad \times \Big( \sum_{j=1}^{m}\frac{\lambda^{\frac{N-2s}{2}}}{(1+\lambda|y-x_{j}|)^{N-2s}} \Big)^{2^*-2}
\sum_{j=1}^{m}\frac{\lambda^{\frac{N-2s}{2}}}{(1+\lambda|y-x_{j}|)^{\frac{N-2s}{2}+\tau}}  \nonumber
\\
 &\leq C\|\varphi_{\epsilon}\|_{*}\lambda^{n_k}\int_{\R^N} |K(y)-1|\frac{\lambda^{\frac{N-2s}{2}}}{(1+\lambda|y-x_{1}|)^{N-2s}}\Big( \sum_{j=1}^{m}\frac{\lambda^{\frac{N-2s}{2}}}{(1+\lambda|y-x_{j}|)^{\frac{N-2s}{2}+\tau}} \Big)^{2^*-1}
 \nonumber
 \\
  &\leq C\|\varphi_{\epsilon}\|_{*}\lambda^{n_k}\int_{\R^N} |K(y)-1|\frac{\lambda^{\frac{N-2s}{2}}}{(1+\lambda|y-x_{1}|)^{N-2s}}\sum_{j=1}^{m}\frac{\lambda^{\frac{N+2s}{2}}}{(1+\lambda|y-x_{j}|)^{\frac{N+2s}{2}+\tau}}
 \nonumber
\\
&\leq
C\|\varphi_{\epsilon}\|_{*}\lambda^{n_k}\int_{\check{B}} |K(y)-1|\frac{\lambda^{\frac{N-2s}{2}}}{(1+\lambda|y-x_{1}|)^{N-2s}}\sum_{j=1}^{m}\frac{\lambda^{\frac{N+2s}{2}}}{(1+\lambda|y-x_{j}|)^{\frac{N+2s}{2}+\tau}} \,dy \nonumber
\\
&\quad+C\|\varphi_{\epsilon}\|_{*}\lambda^{n_k}\int_{(\check{B})^{c} } \frac{\lambda^{\frac{N-2s}{2}}}{(1+\lambda|y-x_{1}|)^{N-2s}}\sum_{j=1}^{m}\frac{\lambda^{\frac{N+2s}{2}}}{(1+\lambda|y-x_{j}|)^{\frac{N+2s}{2}+\tau}}\nonumber
\\
&\leq
C\|\varphi_{\epsilon}\|_{*}\frac{\lambda^{n_{k}}}{\lambda^{1+\iota}}\int_{\check{B}}
\frac{\lambda^{\frac{N-2s}{2}}}{(1+\lambda|y-x_{1}|)^{N-2s}}\sum_{j=1}^{m}\frac{\lambda^{\frac{N+2s}{2}}}{(1+\lambda|y-x_{j}|)^{\frac{N+2s}{2}+\tau}}  \nonumber
\\
&\quad+C\|\varphi_{\epsilon}\|_{*}\lambda^{n_k}\int_{(\check{B})^{c} } \frac{\lambda^{N}}{(1+\lambda|y-x_{1}|)^{\frac{N+2s}{2}+\tau+N-2s}}  \nonumber
\\
&\quad+C\|\varphi_{\epsilon}\|_{*}\lambda^{n_k}\int_{(\check{B})^{c} } \frac{\lambda^{N}}{(1+\lambda|y-x_{1}|)^{\frac{N+2s}{2}+\tau+N-2s}} \nonumber
\\
& \quad+ C\|\varphi_{\epsilon}\|_{*}\lambda^{n_k}\int_{(\check{B})^{c} } \frac{\lambda^{\frac{N-2s}{2}}}{(1+\lambda|y-x_{1}|)^{N-2s}}\sum_{j=2}^{m}\frac{\lambda^{\frac{N+2s}{2}}}{(1+\lambda|y-x_{j}|)^{\frac{N+2s}{2}+\tau}}\nonumber
\\
&\leq C\frac{\lambda^{n_{k}}}{\lambda^{1+\iota}}\|\varphi_{\epsilon}\|_{*},
\end{align}
where $\check{B}$ is the same as that of Lemma \ref{lem2.5}.

Similar to \eqref{j11}, by direct computations we have
\begin{align}\label{2.9.5-8}
|M_{3}| &\leq
C\epsilon\|\varphi_{\epsilon}\|_{*}\int_{\R^N} Z_{\bar r,\bar{y}'',\lambda }^{2^{*}_{s}-2+\kappa\epsilon}|\ln Z_{\bar r,\bar{y}'',\lambda } | |Z_{1,k}|
\sum_{j=1}^{m}\frac{\lambda^{\frac{N-2s}{2}}}{(1+\lambda|y-x_{j}|)^{\frac{N-2s}{2}+\tau}} \,dy \nonumber\\
&\leq
 C\epsilon \ln\frac{1}{\epsilon}\lambda^{n_{k}}\|\varphi_{\epsilon}\|_{*}
 \int_{\R^N}\Big( \sum_{i=1}^{m}\frac{\lambda^{\frac{N-2s}{2}}}{(1+\lambda|y-x_{i}|)^{N-2s}} \Big)^{2^{*}_{s}-2+\kappa\epsilon} \nonumber
 \\
 & \qquad \qquad \qquad \qquad \qquad  \times \frac{\lambda^{\frac{N-2s}{2}}}{(1+\lambda|y-x_{1}|)^{N-2s}}
\sum_{j=1}^{m}\frac{\lambda^{\frac{N-2s}{2}}}{(1+\lambda|y-x_{j}|)^{\frac{N-2s}{2}+\tau}} \nonumber
\\
&\leq C\frac{\lambda^{n_{k}}}{\lambda^{1+\iota}}\|\varphi_{\epsilon}\|_{*}
\int_{\R^N}\frac{\lambda^{\frac{N-2s}{2}}}{(1+\lambda|y-x_{1}|)^{N-2s}} \Big( \sum_{j=1}^{m}\frac{\lambda^{\frac{N-2s}{2}}}{(1+\lambda|y-x_{j}|)^{\frac{N-2s}{2}+\tau}} \Big)^{2^{*}_{s}-1+\kappa\epsilon}  \nonumber
\\
&\leq C\frac{\lambda^{n_{k}}}{\lambda^{1+\iota}}\|\varphi_{\epsilon}\|_{*}
\int_{\R^N}\frac{\lambda^{\frac{N-2s}{2}}}{(1+\lambda|y-x_{1}|)^{N-2s}}  \sum_{j=1}^{m}\frac{\lambda^{\frac{N+2s}{2}}}{(1+\lambda|y-x_{j}|)^{\frac{N+2s}{2}+\tau}}  \nonumber
\\
&\leq C\frac{\lambda^{n_{k}}}{\lambda^{1+\iota}}\|\varphi_{\epsilon}\|_{*}
,
\end{align}
where $0<\kappa<1$.

Finally, similar to \eqref{2.9.5-4} we estimate $M_{4}$ as follows
\begin{align}\label{2.9.5-9}
|M_{4}| &\leq
C\epsilon\|\varphi_{\epsilon}\|_{*}\int_{\R^N} Z_{\bar r,\bar{y}'',\lambda }^{2^{*}_{s}-2+\epsilon} |Z_{1,k}|
\sum_{j=1}^{m}\frac{\lambda^{\frac{N-2s}{2}}}{(1+\lambda|y-x_{j}|)^{\frac{N-2s}{2}+\tau}} \,dy\nonumber\\
&\leq C\epsilon\|\varphi_{\epsilon}\|_{*}\lambda^{n_k}\int_{\R^N}\Big( \sum_{i=1}^{m}\frac{\lambda^{\frac{N-2s}{2}}}{(1+\lambda|y-x_{i}|)^{N-2s}} \Big)^{2^{*}_{s}-2+\epsilon} \nonumber
 \\
 & \qquad \qquad \qquad  \times \frac{\lambda^{\frac{N-2s}{2}}}{(1+\lambda|y-x_{1}|)^{N-2s}}
\sum_{j=1}^{m}\frac{\lambda^{\frac{N-2s}{2}}}{(1+\lambda|y-x_{j}|)^{\frac{N-2s}{2}+\tau}}\nonumber
\\
&\leq C\epsilon \lambda^{n_{k}}\|\varphi_{\epsilon}\|_{*} \int_{\R^N}\frac{\lambda^{\frac{N-2s}{2}}}{(1+\lambda|y-x_{1}|)^{N-2s}} \Big( \sum_{j=1}^{m}\frac{\lambda^{\frac{N-2s}{2}}}{(1+\lambda|y-x_{j}|)^{\frac{N-2s}{2}+\tau}} \Big)^{2^{*}_{s}-1+\epsilon} \nonumber
\\
&\leq C\epsilon \lambda^{n_{k}}\|\varphi_{\epsilon}\|_{*} \int_{\R^N}\frac{\lambda^{\frac{N-2s}{2}}}{(1+\lambda|y-x_{1}|)^{N-2s}}  \sum_{j=1}^{m}\frac{\lambda^{\frac{N+2s}{2}}}{(1+\lambda|y-x_{j}|)^{\frac{N+2s}{2}+\tau}}   \nonumber
\\
&\leq C\frac{\lambda^{n_{k}}}{\lambda^{1+\iota}}\|\varphi_{\epsilon}\|_{*}
\int_{\R^N}\frac{\lambda^{\frac{N-2s}{2}}}{(1+\lambda|y-x_{1}|)^{N-2s}}  \sum_{j=1}^{m}\frac{\lambda^{\frac{N+2s}{2}}}{(1+\lambda|y-x_{j}|)^{\frac{N+2s}{2}+\tau}} \nonumber
\\
&\leq C\frac{\lambda^{n_{k}}}{\lambda^{1+\iota}}\|\varphi_{\epsilon}\|_{*}.
\end{align}
It follows from \eqref{2.9.5-1} to \eqref{2.9.5-9} that \eqref{2.9.5} holds.
\end{proof}

\section{{Proofs of \eqref{14-2-1-1} and \eqref{14-2-2-2}}}\label{sc}

Now first we prove \eqref{14-2-1-1}.

\begin{proof}
Observe that
\begin{align}\label{14-2-1-1-1}
&\int_{B^{c}_{\rho}(y_0)}\sum_{l\,=\,1}^{N}c_{l}\ds\sum_{j\,=\,1}^{m}\,U_{x_{j},\lambda}^{2^{*}_{s}-2}\,Z_{j,l} \, \langle y, \nabla u_\epsilon\bigr\rangle dy \nonumber
\\
=&\int_{B^{c}_{\rho}(y_0)}\sum_{l\,=\,1}^{N}c_{l}\ds\sum_{j\,=\,1}^{m}\,U_{x_{j},\lambda}^{2^{*}_{s}-2}\,Z_{j,l} \, \sum_{i=1}^{N}y_{i}\frac{\partial u_\epsilon}{\partial y_{i}}dy \nonumber
\\
=&\int_{B^{c}_{\rho}(y_0)}\sum_{l\,=\,1}^{N}c_{l}\ds\sum_{j\,=\,1}^{m}\,U_{x_{j},\lambda}^{2^{*}_{s}-2}\,Z_{j,l} \, \sum_{i=1}^{N}y_{i}\big(\frac{\partial Z_{\bar{r},\bar{y}'',\lambda}}{\partial y_{i}}+\frac{\partial \varphi}{\partial y_{i}}\big)dy\nonumber
\\
:=&H_{1}+H_{2}.
\end{align}
Next we estimate $H_{1}$ and $H_{2}$ respectively.

First, we will give the estimates of $H_{1}$.
\begin{equation}\label{14-2-1-1-2}
\begin{split}
H_{1}=&\int_{B^{c}_{\rho}(y_0)}\sum_{l\,=\,1}^{N}c_{l}\ds\sum_{j\,=\,1}^{m}\,U_{x_{j},\lambda}^{2^{*}_{s}-2}\,Z_{j,l} \, \sum_{i=1}^{N}\sum_{k=1}^{m}\big[(y_{i}-x_{k,i})\frac{\partial U_{x_{k},\lambda}}{\partial y_{i}}+x_{k,i}\frac{\partial U_{x_{k},\lambda}}{\partial y_{i}}\big]dy\\
:=&H_{11}+H_{12}.
\end{split}
\end{equation}
Noting that $B^{c}_{\rho}(y_0)\subset B^{c}_{\frac{\rho}{2}}(x_{j}),$
by direct computations, we have
\begin{equation}\label{14-2-1-1-3}
\begin{split}
|H_{11}|\leq\,& C\int_{B^{c}_{\frac{\rho}{2}}(x_{j})}\sum_{l=2}^{N}|c_{l}|\sum_{j=1}^{m}\lambda U^{2^{*}_{s}-1}_{x_{j},\lambda}
\sum_{k=1}^{m}\frac{\lambda^{\frac{N-2s}{2}}}{(1+\lambda|y-x_{k}|)^{N-2s}}dy
\\
&\quad+C\int_{B^{c}_{\frac{\rho}{2}}(x_{j})}|c_{1}|\sum_{j=1}^{m}\frac{U^{2^{*}_{s}-1}_{x_{j},\lambda}}{\lambda}
\sum_{k=1}^{m}\frac{\lambda^{\frac{N-2s}{2}}}{(1+\lambda|y-x_{k}|)^{N-2s}}dy
\\
:=\,&C(H_{111}+H_{112}).
\end{split}
\end{equation}
Using Lemma \ref{lemb1}, it is easy to obtain that
\begin{align}\label{14-2-1-1-4}
|H_{111}|&\leq C\int_{B^{c}_{\frac{\rho}{2}}(x_{j})}\sum_{l=2}^{N}|c_{l}|\sum_{j=1}^{m}\lambda U^{2^{*}_{s}-1}_{x_{j},\lambda}
\sum_{k=1}^{m}\frac{\lambda^{\frac{N-2s}{2}}}{(1+\lambda|y-x_{k}|)^{N-2s}}dy \nonumber
\\
&\leq C\sum_{l=2}^{N}|c_{l}|m\int_{B^{c}_{\frac{\rho}{2}}(x_{j})}\Big[\frac{\lambda^{N+1}}{(1+\lambda|y-x_{j}|)^{2N}}\nonumber
\\
&\quad\quad+\sum_{k\neq j}\frac{\lambda}{(\lambda|x_{k}-x_{j}|)^{N-2s}}\big(\frac{\lambda^{N}}{(1+\lambda|y-x_{j}|)^{N+2s}}+\frac{\lambda^{N}}{(1+\lambda|y-x_{k}|)^{N+2s}}\big)\Big]dy \nonumber
\\&\leq C\sum_{l=2}^{N}|c_{l}|\frac{m}{\lambda^{N-1}}
+C\sum_{l=2}^{N}|c_{l}|\frac{m}{\lambda}=o(m\lambda^{2})\sum_{l=2}^{N}|c_{l}|,
     \end{align}
and
\begin{equation}\label{14-2-1-1-5}
\begin{split}
|H_{112}|&\leq C|c_{1}|m\int_{B^{c}_{\frac{\rho}{2}}(x_{j})}\Big[\frac{\lambda^{N-1}}{(1+\lambda|y-x_{j}|)^{2N}}
\\
&\quad\quad+\sum_{k\neq j}\frac{\lambda^{-1}}{(\lambda|x_{k}-x_{j}|)^{N-2s}}\big(\frac{\lambda^{N}}{(1+\lambda|y-x_{j}|)^{N+2s}}+\frac{\lambda^{N}}{(1+\lambda|y-x_{k}|)^{N+2s}}\big)\Big]dy
\\&\leq C|c_{1}|\frac{m}{\lambda^{N+1}}
+C|c_{1}|\frac{m}{\lambda^{3}}=o(m)|c_{1}|.
\end{split}
\end{equation}

Similarly, we have
\begin{equation}\label{14-2-1-1-6}
\begin{split}
|H_{12}|\leq& C\int_{B^{c}_{\frac{\rho}{2}}(x_{j})}\sum_{l=2}^{N}|c_{l}|\sum_{j=1}^{m}\lambda U^{2^{*}_{s}-1}_{x_{j},\lambda}
\sum_{k=1}^{m}\frac{\lambda^{\frac{N-2s}{2}+1}}{(1+\lambda|y-x_{k}|)^{N-2s+1}}dy
\\
&\quad+C\int_{B^{c}_{\frac{\rho}{2}}(x_{j})}|c_{1}|\sum_{j=1}^{m} \frac{U^{2^{*}_{s}-1}_{x_{j},\lambda}}{\lambda}
\sum_{k=1}^{m}\frac{\lambda^{\frac{N-2s}{2}+1}}{(1+\lambda|y-x_{k}|)^{N-2s+1}}dy
\\
:=&C(H_{121}+H_{122}),
\end{split}
\end{equation}

\begin{align}\label{estimateofH121}
|H_{121}|
&\leq C\sum_{l=2}^{N}|c_{l}|m\int_{B^{c}_{\frac{\rho}{2}}(x_{j})}\Big[\frac{\lambda^{N+2}}{(1+\lambda|y-x_{j}|)^{2N+1}} \nonumber
\\
&\quad\quad+\sum_{k\neq j}\frac{\lambda^{2}}{(\lambda|x_{k}-x_{j}|)^{N-2s}}\big(\frac{\lambda^{N}}{(1+\lambda|y-x_{j}|)^{N+1+2s}}
+\frac{\lambda^{N}}{(1+\lambda|y-x_{k}|)^{N+1+2s}}\big)\Big] dy\nonumber
\\&\leq C\sum_{l=2}^{N}|c_{l}|\frac{m}{\lambda^{N-1}}
+C\sum_{l=2}^{N}|c_{l}|\frac{m\lambda^{2}}{\lambda^{2}}=o(m\lambda^{2})\sum_{l=2}^{N}|c_{l}|,
\end{align}
and
\begin{align}\label{14-2-1-1-7}
|H_{122}|&\leq C|c_{1}|m\int_{B^{c}_{\frac{\rho}{2}}(x_{j})}\Big[\frac{\lambda^{N}}{(1+\lambda|y-x_{j}|)^{2N+1}} \nonumber
\\
&\quad\quad+\sum_{k\neq j}\frac{1}{(\lambda|x_{k}-x_{j}|)^{N-2s}}\big(\frac{\lambda^{N}}{(1+\lambda|y-x_{j}|)^{N+1+2s}}
+\frac{\lambda^{N}}{(1+\lambda|y-x_{k}|)^{N+1+2s}}\big)\Big] dy \nonumber
\\&\leq C|c_{1}|\frac{m}{\lambda^{N+1}}
+C|c_{1}|\frac{m}{\lambda^{2}}=o(m)|c_{1}|.
\end{align}

From \eqref{14-2-1-1-2} to \eqref{14-2-1-1-7}, we have
\begin{equation}\label{14-2-1-1-8}
\begin{split}
|H_{1}|=o(m\lambda^{2})\sum_{l=2}^{N}|c_{l}|+o(m)|c_{1}|.
\end{split}
\end{equation}

Noting that $\partial B^{c}_{\rho}(y_0)\subset B^{c}_{\frac{\rho}{2}}(x_{j})$ and $B^{c}_{\rho}(y_0)\subset B^{c}_{\frac{\rho}{2}}(x_{j}),$
 applying integrating by parts, by Proposition \ref{prop2.3} and Lemma \ref{lemb1} we have

\begin{align}\label{estimateof H2}
H_{2}
=\,&\int_{\partial B^{c}_{\rho}(y_0)}\sum_{l\,=\,1}^{N}c_{l}\ds\sum_{j\,=\,1}^{m}\,U_{x_{j},\lambda}^{2^{*}_{s}-2}\,Z_{j,l} \, \varphi y\cdot \nu ds \nonumber
\\
&\quad-\int_{B^{c}_{\rho}(y_0)}\sum_{l\,=\,1}^{N}c_{l}\ds\sum_{j\,=\,1}^{m}\,\sum_{i=1}^{N}\frac{\partial}{\partial y_{i}}\big(U_{x_{j},\lambda}^{2^{*}_{s}-2}\,Z_{j,l} y_{i}\big)\, \varphi dy \nonumber
\\
=\,&\int_{\partial B^{c}_{\rho}(y_0)}\sum_{l\,=\,1}^{N}c_{l}\ds\sum_{j\,=\,1}^{m}\,U_{x_{j},\lambda}^{2^{*}_{s}-2}\,Z_{j,l} \, \varphi y\cdot \nu ds \nonumber
\\
&\quad
-\int_{B^{c}_{\rho}(y_0)}\sum_{l\,=\,1}^{N}c_{l}\ds\sum_{j\,=\,1}^{m}\sum_{i=1}^{N}\,\big[\frac{\partial U_{x_{j},\lambda}^{2^{*}_{s}-2}}{\partial y_{i}}\,Z_{j,l}y_{i}\, +\frac{\partial Z_{j,l}}{\partial y_{i}}\,U_{x_{j},\lambda}^{2^{*}_{s}-2} y_{i}
+\frac{d y_{i}}{d y_{i}}U_{x_{j},\lambda}^{2^{*}_{s}-2}\,Z_{j,l}\big]\varphi dy \nonumber
\\
:=\,&H_{21}-H_{22}-H_{23}-H_{24}.
\end{align}
In the following, we will estimate the terms one by one.

For $H_{21}$, we have
\begin{align}\label{14-2-1-1-9}
|H_{21}|
&\leq C|c_{1}|\|\varphi\|_{*}\int_{B^{c}_{\frac{\rho}{2}}(x_{j})}\sum_{j=1}^{m}\frac{\lambda^{N-1}}{(1+\lambda|y-x_{j}|)^{N+2s}}
\sum_{k=1}^{m}\frac{1}{(1+\lambda|y-x_{k}|)^{\frac{N-2s}{2}+\tau}} \nonumber
\\
&\quad+C\sum_{l=2}^{N}|c_{l}|\|\varphi\|_{*}\int_{B^{c}_{\frac{\rho}{2}}(x_{j})}\sum_{j=1}^{m}\frac{\lambda^{N+1}}{(1+\lambda|y-x_{j}|)^{N+2s}}
\sum_{k=1}^{m}\frac{1}{(1+\lambda|y-x_{k}|)^{\frac{N-2s}{2}+\tau}}  \nonumber
\\
&\leq
C|c_{1}|\|\varphi\|_{*}m\int_{B^{c}_{\frac{\rho}{2}}(x_{j})}\Big[\frac{\lambda^{N-1}}{(1+\lambda|y-x_{j}|)^{\frac{3N+2s}{2}+\tau}}  \nonumber
\\
&\quad\quad+\sum_{k\neq j}\frac{\lambda^{-1}}{(\lambda|x_{k}-x_{j}|)^{\frac{N-2s}{2}+\tau}}
\big(\frac{\lambda^{N}}{(1+\lambda|y-x_{j}|)^{N+2s}}+\frac{\lambda^{N}}{(1+\lambda|y-x_{k}|)^{N+2s}}\big)\Big]  \nonumber
\\
&\quad+
 C\sum_{l=2}^{N}|c_{l}|\|\varphi\|_{*}m\int_{B^{c}_{\frac{\rho}{2}}(x_{j})}\Big[\frac{\lambda^{N+1}}{(1+\lambda|y-x_{j}|)^{\frac{3N+2s}{2}+\tau}}  \nonumber
\\
&\quad\quad+\sum_{k\neq j}\frac{\lambda}{(\lambda|x_{k}-x_{j}|)^{\frac{N-2s}{2}+\tau}}
\big(\frac{\lambda^{N}}{(1+\lambda|y-x_{j}|)^{N+2s}}+\frac{\lambda^{N}}{(1+\lambda|y-x_{k}|)^{N+2s}}\big)\Big]\nonumber
\\&=o(m\lambda^{2})\sum_{l=2}^{N}|c_{l}|+o(m)|c_{1}|.
\end{align}

From directly computations, we obtain
\begin{align}\label{14-2-1-1-10}
H_{22}&=\int_{B^{c}_{\rho}(y_0)}\sum_{l\,=\,1}^{N}c_{l}\ds\sum_{j\,=\,1}^{m}\sum_{i=1}^{N}\frac{\partial U_{x_{j},\lambda}^{2^{*}_{s}-2}}{\partial y_{i}}\,Z_{j,l} \,\big[(y_{i}-x_{j,i})+x_{j,i}\,\big]\varphi dy  \nonumber
\\
&\leq C\|\varphi\|_{*}\sum_{l\,=\,1}^{N}|c_{l}|m\int_{B_{\frac{\rho}{2}}(x_{j})}\frac{\lambda^{n_{l}+N}}{(1+\lambda|y-x_{j}|)^{N+2s}}
\sum_{k=1}^{m}\frac{1}{(1+\lambda|y-x_{k}|)^{\frac{N-2s}{2}+\tau}} \nonumber
\\
&\quad+C\|\varphi\|_{*}\sum_{l\,=\,1}^{N}|c_{l}|m\int_{B_{\frac{\rho}{2}}(x_{j})}\frac{\lambda^{n_{l}+1+N}}{(1+\lambda|y-x_{j}|)^{N+2s+1}}
\sum_{k=1}^{m}\frac{1}{(1+\lambda|y-x_{k}|)^{\frac{N-2s}{2}+\tau}} \nonumber
\\
&\leq
C|c_{1}|\|\varphi\|_{*}m\int_{B^{c}_{\frac{\rho}{2}}(x_{j})}\Big[\frac{\lambda^{N-1}}{(1+\lambda|y-x_{j}|)^{\frac{3N+2s}{2}+\tau}} \nonumber
\\
&\quad\quad+\sum_{k\neq j}\frac{\lambda^{-1}}{(\lambda|x_{k}-x_{j}|)^{\frac{N-2s}{2}+\tau}}
\big(\frac{\lambda^{N}}{(1+\lambda|y-x_{j}|)^{N+2s}}+\frac{\lambda^{N}}{(1+\lambda|y-x_{k}|)^{N+2s}}\big)\Big] \nonumber
\\
&\quad+
 C\sum_{l=2}^{N}|c_{l}|\|\varphi\|_{*}m\int_{B^{c}_{\frac{\rho}{2}}(x_{j})}\Big[\frac{\lambda^{N+1}}{(1+\lambda|y-x_{j}|)^{\frac{3N+2s}{2}+\tau}} \nonumber
\\
&\quad\quad+\sum_{k\neq j}\frac{\lambda}{(\lambda|x_{k}-x_{j}|)^{\frac{N-2s}{2}+\tau}}
\big(\frac{\lambda^{N}}{(1+\lambda|y-x_{j}|)^{N+2s}}+\frac{\lambda^{N}}{(1+\lambda|y-x_{k}|)^{N+2s}}\big)\Big] \nonumber
\\
&\quad+
C|c_{1}|\|\varphi\|_{*}m\int_{B^{c}_{\frac{\rho}{2}}(x_{j})}\Big[\frac{\lambda^{N}}{(1+\lambda|y-x_{j}|)^{\frac{3N+2s+2}{2}+\tau}} \nonumber
\\
&\quad\quad+\sum_{k\neq j}\frac{1}{(\lambda|x_{k}-x_{j}|)^{\frac{N-2s}{2}+\tau}}
\big(\frac{\lambda^{N}}{(1+\lambda|y-x_{j}|)^{N+2s}}+\frac{\lambda^{N}}{(1+\lambda|y-x_{k}|)^{N+2s+1}}\big)\Big] \nonumber
\\
&\quad+
 C\sum_{l=2}^{N}|c_{l}|\|\varphi\|_{*}m\int_{B^{c}_{\frac{\rho}{2}}(x_{j})}\Big[\frac{\lambda^{N+2}}{(1+\lambda|y-x_{j}|)^{\frac{3N+2s+2}{2}+\tau}} \nonumber
\\
&\quad\quad+\sum_{k\neq j}\frac{\lambda^{2}}{(\lambda|x_{k}-x_{j}|)^{\frac{N-2s}{2}+\tau}}
\big(\frac{\lambda^{N}}{(1+\lambda|y-x_{j}|)^{N+2s+1}}+\frac{\lambda^{N}}{(1+\lambda|y-x_{k}|)^{N+2s+1}}\big)\Big] \nonumber
\\
&=o(m\lambda^{2})\sum_{l=2}^{N}|c_{l}|+o(m)|c_{1}|.
\end{align}

Similar to \eqref{14-2-1-1-10}, we can also check that
\begin{equation}\label{14-2-1-1-11}
\begin{split}
H_{23}=o(m\lambda^{2})\sum_{l=2}^{N}|c_{l}|+o(m)|c_{1}|.
\end{split}
\end{equation}
Finally, we estimate $H_{24}$ as follows
\begin{equation}\label{14-2-1-1-12}
\begin{split}
H_{24}&\leq  C\|\varphi\|_{*}\sum_{l\,=\,1}^{N}|c_{l}|m\int_{B_{\frac{\rho}{2}}(x_{j})}\frac{\lambda^{n_{l}+N}}{(1+\lambda|y-x_{j}|)^{N+2s}}
\sum_{k=1}^{m}\frac{1}{(1+\lambda|y-x_{k}|)^{\frac{N-2s}{2}+\tau}}\\
&=o(m\lambda^{2})\sum_{l=2}^{N}|c_{l}|+o(m)|c_{1}|.
\end{split}
\end{equation}
It follows from all the estimates above that \eqref{14-2-1-1} holds.
\end{proof}

Then we prove \eqref{14-2-2-2}.

\begin{proof}
Note that
\begin{equation}\label{14-2-2-2-1}
\begin{split}
&\int_{B^{c}_{\rho}(y_0)}\sum_{l\,=\,1}^{N}c_{l}\ds\sum_{j\,=\,1}^{m}\,U_{x_{j},\lambda}^{2^{*}_{s}-2}\,Z_{j,l} \, \, \frac{\partial u_\epsilon}{\partial y_i}\\
=&\int_{B^{c}_{\rho}(y_0)}\sum_{l\,=\,1}^{N}c_{l}\ds\sum_{j\,=\,1}^{m}\,U_{x_{j},\lambda}^{2^{*}_{s}-2}\,Z_{j,l} \, \, \Big(\frac{\partial Z_{\bar{r},\bar{y}'',\lambda}}{\partial y_i}+\frac{\partial \varphi}{\partial y_{i}}\Big)\\
:=&G_{1}+G_{2}.
\end{split}
\end{equation}
By direct computations, from Proposition \ref{prop2.3} and Lemma \ref{lemb1} we have
\begin{align}\label{14-2-2-2-2}
|G_{1}|
& \leq C\int_{B^{c}_{\frac{\rho}{2}}(x_{j})}\sum_{l=1}^{N}|c_{l}|\sum_{j=1}^{m}\lambda^{n_{l}} U^{2^{*}_{s}-1}_{x_{j},\lambda}
\Big|\frac{\partial Z_{\bar{r},\bar{y}'',\lambda}}{\partial y_i}\Big| \nonumber
\\
&\leq
 C\int_{B^{c}_{\frac{\rho}{2}}(x_{j})}\sum_{l=1}^{N}|c_{l}|\sum_{j=1}^{m}\lambda^{n_{l}+1} U^{2^{*}_{s}-1}_{x_{j},\lambda}
\sum_{k=1}^{m}\frac{\lambda^{\frac{N-2s}{2}}}{(1+\lambda|y-x_{k}|)^{N-2s+1}} \nonumber
\\
&\leq C|c_{1}|m\int_{B^{c}_{\frac{\rho}{2}}(x_{j})}\Big[\frac{\lambda^{N}}{(1+\lambda|y-x_{j}|)^{2N+1}} \nonumber
\\
&\quad\quad+\sum_{k\neq j}\frac{1}{(\lambda|x_{k}-x_{j}|)^{N-2s}}\big(\frac{\lambda^{N}}{(1+\lambda|y-x_{j}|)^{N+1+2s}}
+\frac{\lambda^{N}}{(1+\lambda|y-x_{k}|)^{N+1+2s}}\big)\Big] \nonumber
\\
&\quad+ C\sum_{l=2}^{N}|c_{l}|m\int_{B^{c}_{\frac{\rho}{2}}(x_{j})}\Big[\frac{\lambda^{N+2}}{(1+\lambda|y-x_{j}|)^{2N+1}} \nonumber
\\
&\quad\quad+\sum_{k\neq j}\frac{\lambda^{2}}{(\lambda|x_{k}-x_{j}|)^{N-2s}}\big(\frac{\lambda^{N}}{(1+\lambda|y-x_{j}|)^{N+1+2s}}
+\frac{\lambda^{N}}{(1+\lambda|y-x_{k}|)^{N+1+2s}}\big)\Big] \nonumber
\\&\leq C|c_{1}|\frac{m}{\lambda^{N}}+C|c_{1}|\frac{m}{\lambda^{2}}+C\sum_{l=2}^{N}|c_{l}|\frac{m}{\lambda^{N-3}}
+Cm\sum_{l=2}^{N}|c_{l}|\nonumber\\
&=o(m)|c_{1}|+o(m\lambda^{2})\sum_{l=2}^{N}|c_{l}|.
\end{align}

Applying integrating by parts, we have
\begin{align}\label{14-2-2-2-3}
G_{2}
=\,&\int_{\partial B^{c}_{\rho}(y_0)}
\sum_{l=1}^{N}|c_{l}|\sum_{j=1}^{m} U^{2^{*}_{s}-2}_{x_{j},\lambda}Z_{j,l}
\varphi \nu_{i} \, ds
-\int_{B_{\rho}^{c}(y_0)}\sum_{l=1}^{N}|c_{l}|\sum_{j=1}^{m}\frac{\partial }{\partial y_{i}} (U^{2^{*}_{s}-2}_{x_{j},\lambda}Z_{j,l})\varphi  \, dy \nonumber
\\
=\,&\int_{\partial B^{c}_{\rho}(y_0)}
\sum_{l=1}^{N}|c_{l}|\sum_{j=1}^{m}\lambda^{n_{l}} U^{2^{*}_{s}-1}_{x_{j},\lambda}\varphi \nu_{i} \, ds \nonumber
\\
&-\int_{\partial'\mathcal{B}_{\rho}^{+}(y_0))^{c}}\sum_{l=1}^{N}|c_{l}|\sum_{j=1}^{m}\big(\frac{\partial U^{2^{*}_{s}-2}_{x_{j},\lambda}}{\partial y_{i}} Z_{j,l}+\frac{\partial Z_{j,l}}{\partial y_{i}}U^{2^{*}_{s}-2}_{x_{j},\lambda}\big)\varphi \, dy \nonumber
\\
:=\,&G_{21}-G_{22}-G_{23}.
\end{align}
Similar to \eqref{14-2-1-1}, we can check that
\begin{equation}\label{14-2-2-2-4}
\begin{split}
|G_{21}|=o(m\lambda^{2})\sum_{l=2}^{N}|c_{l}|+o(m)|c_{1}|.
\end{split}
\end{equation}

Noting that $\partial B^{c}_{\rho}(y_0)\subset B^{c}_{\frac{\rho}{2}}(x_{j})$ and $B^{c}_{\rho}(y_0)\subset B^{c}_{\frac{\rho}{2}}(x_{j}),$
 applying integrating by parts, by Proposition \ref{prop2.3} and Lemma \ref{lemb1} we have
 \begin{align}\label{14-2-2-2-5}
|G_{22}|&\leq
C|c_{1}|\|\varphi\|_{*}m\int_{B^{c}_{\frac{\rho}{2}}(x_{j})}\Big[\frac{\lambda^{N}}{(1+\lambda|y-x_{j}|)^{\frac{3N+2s}{2}+\tau}} \nonumber
\\
&\quad\quad+\sum_{k\neq j}\frac{1}{(\lambda|x_{k}-x_{j}|)^{\frac{N-2s}{2}+\tau}}
\big(\frac{\lambda^{N}}{(1+\lambda|y-x_{j}|)^{N+2s}}+\frac{\lambda^{N}}{(1+\lambda|y-x_{k}|)^{N+2s}}\big)\Big] \nonumber
\\
&\quad+
 C\sum_{l=2}^{N}|c_{l}|\|\varphi\|_{*}m\int_{B^{c}_{\frac{\rho}{2}}(x_{j})}\Big[\frac{\lambda^{N+2}}{(1+\lambda|y-x_{j}|)^{\frac{3N+2s}{2}+\tau}} \nonumber
\\
&\quad\quad+\sum_{k\neq j}\frac{\lambda^{2}}{(\lambda|x_{k}-x_{j}|)^{\frac{N-2s}{2}+\tau}}
\big(\frac{\lambda^{N}}{(1+\lambda|y-x_{j}|)^{N+2s}}+\frac{\lambda^{N}}{(1+\lambda|y-x_{k}|)^{N+2s}}\big)\Big] \nonumber
\\
&=o(m\lambda^{2})\sum_{l=2}^{N}|c_{l}|+o(m)|c_{1}|.
\end{align}
Just by the same argument as \eqref{14-2-2-2-5}, we can prove
\begin{equation}\label{14-2-2-2-6}
\begin{split}
|G_{23}|=o(m\lambda^{2})\sum_{l=2}^{N}|c_{l}|+o(m)|c_{1}|.
\end{split}
\end{equation}
By all the estimates above, we know that \eqref{14-2-2-2} holds.
\end{proof}

\vspace{0.2cm}
\textbf{Acknowledgements}
This paper was partially supported by NSFC (No.11671162;
No.11571130; No. 11601194) and CCNU18CXTD04.

\end{document}